\let\bar\overbar
\renewcommand{\smash}{}
\newlist{underlineenumerate}{enumerate}{1}
\setlist[underlineenumerate]{label=\underline{\arabic*.},resume}
\newcommand{\titletext}{Generalized equivalences 
between subsampling and ridge~regularization}
\newcommand{\removelinebreaks}[1]{%
      \def\\{\relax}#1}
\def\titleRLB{\removelinebreaks{\titletext}}
\title{\titletext}
\newcommand{\footremember}[2]{%
    \footnote{#2}
    \newcounter{#1}
    \setcounter{#1}{\value{footnote}}%
}
\author{
    Pratik Patil\footremember{berkeleystats}{Department of Statistics, University of California, Berkeley, CA 94720, USA.} \\ {\small \href{mailto:pratikpatil@berkeley.edu}{pratikpatil@berkeley.edu}} \and
    Jin-Hong Du\footremember{cmustats}{Department of Statistics and Data Science, Carnegie Mellon University, Pittsburgh, PA 15213, USA.}\footremember{cmumld}{Machine Learning Department, Carnegie Mellon University, Pittsburgh, PA 15213, USA.} \\ {\small \href{mailto:jinhongd@andrew.cmu.edu}{jinhongd@andrew.cmu.edu}}
}
\begin{document}

\maketitle

\begin{abstract}
  We establish precise structural and risk equivalences between subsampling and ridge regularization for ensemble ridge estimators. Specifically, we prove that linear and quadratic functionals of subsample ridge estimators, when fitted with different ridge regularization levels $\lambda$ and subsample aspect ratios $\psi$, are asymptotically equivalent along specific paths in the $(\lambda,\psi)$-plane (where $\psi$ is the ratio of the feature dimension to the subsample size). Our results only require bounded moment assumptions on feature and response distributions and allow for arbitrary joint distributions. Furthermore, we provide a data-dependent method to determine the equivalent paths of $(\lambda,\psi)$. An indirect implication of our equivalences is that optimally tuned ridge regression exhibits a monotonic prediction risk in the data aspect ratio. This resolves a recent open problem raised by \citet{nakkiran2021optimal} for general data distributions under proportional asymptotics, assuming a mild regularity condition that maintains regression hardness through linearized signal-to-noise ratios.  
\end{abstract}

\section{Introduction}

Ensemble methods, such as bagging \citep{breiman_1996,buhlmann2002analyzing}, are powerful tools that combine weak predictors to improve predictive stability and accuracy.
This paper focuses on sampling-based ensembles, which exhibit an implicit regularization effect \citep{wyner2017explaining,belkin2019reconciling,mentch2020randomization}.
Specifically, we investigate subsample ridge ensembles, where the ridge predictors are fitted on independently subsampled datasets \cite{hoerl_kennard_1970_1,hoerl_kennard_1970_2,hastie2020ridge}.
Recent work has demonstrated that a full ensemble (that is fitted on all possible subsampled datasets) of ridgeless \cite{hastie2022surprises} predictors achieves the same squared prediction risk as a ridge predictor fitted on full data~\citep{lejeune2020implicit,patil2022bagging,du2023gcv}.

To be precise, let $\phi$ be the limiting dataset aspect ratio \smash{$p/n$}, where $p$ is the feature dimension, and $n$ is the number of observations.
For a given $\phi$, the limiting prediction risk of subsample ridge ensembles is parameterized by {$(\lambda,\psi)$}, where $\lambda\geq 0$ is the ridge regularization parameter and \smash{$\psi\geq\phi$} is the limiting subsample aspect ratio $p/k$, with $k$ being the subsample size \citep{lejeune2020implicit,patil2022bagging}.
Under isotropic features and a well-specified linear model, \citep{lejeune2020implicit,patil2022bagging} show that the squared prediction risk at $(0,\psi^*)$ (the optimal ridgeless ensemble) is the same as the risk at $(\lambda^*,\phi)$ (the optimal ridge), where $\psi^*$ and $\lambda^*$ are the optimal subsample aspect ratio and ridge penalty, respectively.
Furthermore, this equivalence of prediction risk between subsampling and ridge regularization is extended in \cite{du2023gcv} to anisotropic linear models.
As an application, \cite{du2023gcv} also demonstrates how generalized cross-validation for ridge regression can be naturally transferred to subsample ridge ensembles.
In essence, these results suggest that subsampling a smaller number of observations has the same effect as adding an appropriately larger level of ridge penalty.
These findings prompt two important open questions:
\begin{enumerate}
[(a),leftmargin=7mm]
    \item \textbf{The extent of equivalences.}
    The previous works all focus on equivalences of the squared prediction risk when the test distribution matches the train distribution.
    In real-world scenarios, however, there are often covariate and label shifts, making it crucial to consider prediction risks under such shifts.
    In addition, other risk measures, such as training error, estimation risk, coefficient errors, and more, are also of interest in various inferential tasks.
    A natural question is then whether similar equivalences hold for such ``generalized'' risk measures.
    At a basic level, the question boils down to whether any equivalences exist at the ``estimator level''.
    Answering this question would establish a connection between the ensemble and the ridge estimators, facilitating the exchange of various downstream inferential statements between the two sets of estimators.
    
    \item \textbf{The role of linear model.}
    All previous works assume a well-specified linear model between the responses and the features,
    which rarely holds in practical applications.
    A natural question is whether the equivalences hold for \emph{arbitrary} joint distributions of the response and the features or whether they are merely an artifact of the linear model.
    Addressing this question broadens the applicability of such equivalences beyond simplistic models to real-world scenarios, where the relationship between the response and the features is typically intricate and unknown.
\end{enumerate}

    \begin{table}[!t]
        \centering
        \caption{Comparison with related work.
        The marker ``$\checkmark^{\circ}$'' indicates a partial equivalence result connecting the \emph{optimal} prediction risk of the ridge predictor and the full ridgeless ensemble predictor.}
        \label{tab:summary}
        \begin{tabularx}{\textwidth}{C{2.9cm}C{1.5cm}C{1.3cm}C{1.3cm}C{1.2cm}C{1.4cm}C{1.43cm}C{2.2cm}}
            \toprule
              & \multicolumn{3}{c}{\textbf{Type of equivalence results}} & \multicolumn{4}{c}{\textbf{Type of data assumptions}} \\\cmidrule(lr){2-4} \cmidrule(lr){5-8}
              & pred.\ risk & gen.\ risk & estim.\  & response & feature & covariance & lim.\ spectrum \\
             \midrule
              \citet{lejeune2020implicit} &  $\checkmark^{\circ}$ & \cellcolor{lightgray!25} & \cellcolor{lightgray!25} & linear & Gaussian & isotropic & 
              exists
              \\%\hhline{*4~>{\arrayrulecolor{white}}-~>{\arrayrulecolor{white}}-}  
              \citet{patil2022bagging} & $\checkmark^{\circ}$ & \cellcolor{lightgray!25} & \cellcolor{lightgray!25} & linear & RMT 
                & isotropic & exists  \\ 
              \citet{du2023gcv}  &\checkmark& \cellcolor{lightgray!25} & \cellcolor{lightgray!25} & linear & RMT & anisotropic & exists\\
              This work   & \checkmark& \checkmark& \checkmark & arbitrary & RMT & anisotropic & need not exist\\
             \bottomrule
        \end{tabularx}
    \end{table}

We provide answers to questions raised in both directions. We demonstrate that the equivalences hold for the generalized squared risks in the full ensemble.
Further, these equivalences fundamentally occur at the estimator level for any arbitrary ensemble size.
Importantly, these results are not limited to linear models of the data-generating process.

\subsection{Summary of contributions}

Below we provide a summary of our main results.

\begin{enumerate}
[(1),leftmargin=7mm]

    \item \textbf{Risk equivalences.}
    We establish asymptotic equivalences of the full-ensemble ridge estimators at different ridge penalties $\lambda$ and subsample ratios $\psi$ along specific paths in the $(\lambda,\psi)$-plane for a variety of generalized risk functionals.
    This class of functionals includes commonly used risk measures (see \Cref{tab:functionals}), and our results hold for both independent and dependent coefficient matrices that define these risk functionals (see \Cref{thm:equiv-risk} and \Cref{prop:equiv-risk-X}).
    In addition, we demonstrate that the equivalence path remains unchanged across all the functionals examined.
    
    \item \textbf{Structural equivalences.}
    We establish structural equivalences in the form of linear functionals of the ensemble ridge estimators, which hold for arbitrary ensemble sizes (see \Cref{thm:equiv-estimator-linear}).
    Our proofs for both structural and risk equivalences exploit properties of certain fixed equations that arise in our analysis, enabling us to explicitly characterize paths that yield equivalent estimators in the $(\lambda,\psi)$-plane (see \Cref{eq:path}).
    In addition, we provide an entirely data-driven construction of this path using certain companion Stieltjes transform relations of random matrices (see \Cref{prop:data-path}).
    
    \item \textbf{Equivalence implications.}
    As an implication of our equivalences, we show that the prediction risk of an optimally-tuned ridge estimator is monotonically increasing in the data aspect ratio under mild regularity conditions (see \Cref{prop:monotonicty}).
    This is an indirect consequence of our general equivalence results that leverages the provable monotonicity of the subsample-optimized estimator.
    Under proportional asymptotics, our result settles a recent open question raised by \citet[Conjecture 1]{nakkiran2021optimal} concerning the monotonicity of optimal ridge regression under anisotropic features and general data models while maintaining a regularity condition that preserves the linearized signal-to-noise ratios across regression problems.
    
    \item \textbf{Generality of equivalences.}
    Our main results apply to arbitrary responses with bounded $4+\mu$ moments for some $\mu>0$, as well as features with similar bounded moments and general covariance structures.
    We demonstrate the practical implications of our findings on real-world datasets (see \Cref{sec:extensions}).
    On the technical side, we extend the tools developed in \cite{bartlett_montanari_rakhlin_2021} to handle the dependency between the linear and the non-linear components and obtain model-free equivalences.
    Furthermore, we extend our analysis to include generalized ridge regression (see \Cref{cor:gen-ridge}). 
    Through experiments, we demonstrate the universality of the equivalences for random and kernel features and conjecture the associated data-driven prediction risk equivalence paths (see \Cref{sec:extensions}).
\end{enumerate} 

The source code for reproducing the numerical illustrations in this paper can be found at following link: \url{https://jaydu1.github.io/overparameterized-ensembling/equiv/}.

\subsection{Related work}

    Below we discuss some related work and draw several connections to our work.

    \paragraph{Ensemble learning.}
    Ensemble methods yield strong predictors by combining weak predictors \cite{hastie2009elements}.
    The most common strategy for building ensembles is based on subsampling observations, which includes bagging \citep{breiman_1996,buhlmann2002analyzing}, random forests \citep{breiman2001random}, neural network ensembles \citep{hansen1990neural,krogh1994neural,lakshminarayanan2017simple}, among the others.    
    The effect of sampling-based ensembles has been studied in the statistical physics literature \citep{perrone1993putting,krogh1995learning,krogh1997statistical} as well as statistical learning literature \citep{breiman_1996,buhlmann2002analyzing}.
    Recent works have attempted to explain the success of ensemble learning by suggesting that subsampling induces a regularizing effect \citep{wyner2017explaining,mentch2020randomization}.
    Under proportional asymptotics, the effect of ensemble random feature models has been investigated in \citep{d2020double,adlam2020understanding,loureiro2022fluctuations}.
    There has been growing interest in connecting the effect of ensemble learning to explicit regularization: see \cite{yao2021minipatch} for related experimental evidence under the umbrella of ``mini-patch'' learning.
    Towards making these connections precise, some work has been done in the context of the ridge and ridgeless ensembles:
    \citep{lejeune2020implicit,patil2022bagging,du2023gcv}
    investigate the relationship between subsampling and regularization for ridge ensembles
    in the overparameterized regime. We will delve into these works in detail next.

    \paragraph{Ensembles and ridge equivalences.}
    In the study of ridge ensembles, \citet{lejeune2020implicit} show that the optimal ridge predictor has the same prediction risk as the ridgeless ensemble under the Gaussian isotropic design, which has been extended in \citep{patil2022bagging} for RMT features (see \Cref{asm:feat} for definition).
    Specifically, these works establish the prediction risk equivalences for a specific pair of $(\lambda^*,\phi)$ and $(0,\psi^*)$.
    The results are then extended to the entire range of $(\lambda,\psi)$ using deterministic asymptotic risk formulas \citep{du2023gcv}, assuming a well-specified linear model, RMT features, and the existence of a fixed limiting spectral distribution of the covariance matrix.
    Our work significantly broadens the scope of results
    connecting subsampling and ridge regularization.
    In particular, we allow for arbitrary joint distributions of the data, anisotropic features with only bounded moments, and do not assume the convergence of the spectrum of the population covariance matrix.
    Furthermore, we expand the applicability of equivalences to encompass both the estimators and various generalized squared risk functionals.
    See \Cref{tab:summary} for an explicit comparison and \Cref{sec:generalized-risk} for additional related comparisons.

    \paragraph{Other ridge connections.}
    Beyond the connection connections between implicit regularization induced by subsampling and explicit ridge regularization examined in this paper, ridge regression shares ties with various other forms of regularization.
    For example, ridge regression has been linked to dropout regularization \cite{wager2013dropout,srivastava2014dropout}, variable splitting in random forests \cite{breiman_1996}, noisy training \cite{webb1994functional,bishop1995training}, random sketched regression \cite{thanei2017random,rudi2015less,lejeune2022asymptotics}, various forms of data augmentation \cite{lin2022good}, feature augmentation \cite{skurichina1998regularization,hastie2020ridge}, early stopping in gradient descent and its variants \cite{neu2018iterate,suggala2018connecting,ali2020implicit,ali2019continuous}, among others.
    These connections highlight the pervasiveness and the significance of understanding various ``facets'' of ridge regularization. In that direction, our work contributes to expand the equivalences between subsampling and ridge regularization.

\section{Notation and preliminaries}
    \label{sec:preliminaries}
    Below we formally define the ensemble estimators and their generalized risks, and set our notation.

    \paragraph{Ensemble estimators.} 
    Let \smash{$\mathcal{D}_n = \{(\bx_i, y_i) : i \in [n] \}$} be a dataset containing i.i.d.\ samples in $\RR^{p} \times \RR$.
    Let \smash{$\bX\in\RR^{n\times p}$} and $\by \in \RR^{n}$ be the feature matrix and response vector with \smash{$\bx_i^\top$} and $y_i$ in $i$-th rows.
    For an index set $I\subseteq [n]$ of size $k$, let \smash{$\mathcal{D}_{I} = \{(\bx_i, y_i):\, i \in I\}$} be the associated subsampled dataset.
    Let \smash{$\bL_I \in \RR^{n \times n}$} be a diagonal matrix such that its $i$-th diagonal entry is $1$ if $i \in I$ and $0$ otherwise. 
    Note that the feature matrix and response vector associated with $\cD_I$ respectively are $\bL_I \bX$ and $\bL_I \by$.
    Given a ridge penalty $\lambda > 0$, the \emph{ridge} estimator fitted on \smash{$\cD_I$} consisting of $k$ samples is given by:
    \begin{equation}
        \hbeta^{\lambda}_{k}(\cD_I) 
       = \argmin\limits_{\bbeta\in\RR^p}
        \frac{1}{k}\sum_{i \in I} (y_i  - \bx_i^\top \bbeta)^2 
        + \lambda \| \bbeta \|_2^2
        = \bigg(\frac{1}{k}\bX^{\top} \bL_I \bX   + \lambda\bI_p\bigg)^{-1}\frac{\bX^{\top} \bL_I \by}{k} .\label{eq:ingredient-estimator}
    \end{equation}
    The \emph{ridgeless} estimator \smash{$\hbeta^{0}_{k}(\cD_I)=(\bX^{\top} \bL_I \bX/k)^{+}\bX^{\top} \bL_I \by/k$} is obtained by sending  $\lambda\rightarrow0^+$, 
    where \smash{$\bM^+$} denotes the Moore-Penrose inverse of matrix $\bM$.
    To define the ensemble estimator, denote the set of all $k$ distinct elements from $[n]$ by \smash{$\mathcal{I}_k= \{\{i_1, i_2, \ldots, i_k\}:\, 1\le i_1 < i_2 < \ldots < i_k \le n\}$}.
    For $\lambda\geq0$, the \emph{$M$-ensemble} and the \emph{full-ensemble} estimators are respectively defined as follows:
    \begin{equation}
        \hbeta^{\lambda}_{k,M}(\cD_n;\{I_{\ell}\}_{\ell=1}^M) 
        =  
        \frac{1}{M} \sum_{\ell\in[M]}\hbeta^{\lambda}_{k}(\cD_{I_\ell}),
        \quad
        \text{and}
        \quad
        \hbeta^{\lambda}_{k,\infty}(\cD_n)
        =
        \EE[\hbeta^{\lambda}_{k}(\cD_{I}) \mid \cD_n].
        \label{eq:def-full-ensemble}
    \end{equation}
    Here \smash{$\{ I_\ell \}_{\ell=1}^{M}$} and $I$ are simple random samples from \smash{$\cI_k$}.
    In \eqref{eq:def-full-ensemble}, the full-ensemble ridge estimator \smash{$\hbeta^{\lambda}_{k, \infty}(\cD_{n})$} is the average of predictors fitted on all possible subsampled datasets.
    It is shown in Lemma A.1 of \citep{du2023gcv} that \smash{$\hbeta^{\lambda}_{k,\infty}(\cD_n)$} is also asymptotically equivalent to the limit of \smash{$\hbeta^{\lambda}_{k,M}(\cD_n;\{I_{\ell}\}_{\ell=1}^M)$} as the ensemble size $M \to \infty$ (conditioning on the full dataset \smash{$\cD_n$}). This also justifies using $\infty$ in the subscript to denote the full-ensemble estimator.
    For brevity, we simply write the estimators as \smash{$\hbeta^{\lambda}_{k,M}$}, \smash{$\hbeta^{\lambda}_{k,\infty}$}, and drop the dependency on \smash{$\cD_n$}, \smash{$\{I_{\ell}\}_{\ell=1}^M$}.
    We will show certain structural equivalences in terms of linear projections in the family of estimators \smash{$\hbeta^{\lambda}_{k,M}$} along certain paths in the $(\lambda, p/k)$-plane for arbitrary ensemble size \smash{$M\in\NN\cup\{\infty\}$} (in \Cref{thm:equiv-estimator-linear}).
    Apart from connecting the estimators, we will also show equivalences of various notions of risks (in \Cref{thm:equiv-risk}), which we describe next.

    \paragraph{Generalized risks.} 
    Since the ensemble ridge estimators are linear estimators, we evaluate their performance relative to the oracle parameter: \smash{$\bbeta_0 = \EE[\bx\bx^{\top}]^{-1} \EE[\bx y]$}, which is the best (population) linear projection of $y$ onto $\bx$ and minimizes the linear regression error (see, e.g., \cite{buja2014conspiracy,buja2019models_1,buja2019models_2}).
    Note that we can decompose any response $y$ into: \smash{$y = \fLI(\bx) + \fNL(\bx)$,} where \smash{$\fLI(\bx)=\bbeta_0^{\top}\bx$} is the oracle linear predictor, and \smash{$\fNL(\bx)=y-\fLI(\bx)$} is the nonlinear component that is not explained by \smash{$\fLI(\bx)$}.
    The best linear projection has the useful property that \smash{$\fLI(\bx)$} is (linearly) uncorrelated with \smash{$\fNL(\bx)$}, although they are generally dependent.
    It is worth mentioning that this does not imply that $y$ and $\bx$ follow a linear regression model. Indeed, our framework allows any nonlinear dependence structure between them and is model-free for the joint distribution of $(\bx,y)$.
    Relative to $\bbeta_0$, we measure the performance of an estimator \smash{$\hbeta$} via the generalized mean squared~risk defined compactly as follows:
    \begin{equation}
        \label{eq:generalized-risk}
        R(\hbeta;\bA, \bb, \bbeta_0) = 
        \frac{1}{\mathrm{nrow}(\bA)}
        \|L_{\bA,\bb}(\hbeta - \bbeta_0)\|_2^2,
    \end{equation}
    where \smash{$L_{\bA,\bb}(\bbeta)=\bA\bbeta + \bb$} is a linear functional.
    Note that $\bA$ and $\bb$ can potentially depend on the data, and their dimensions can vary depending on the statistical learning problem at hand.
    This framework includes various important statistical learning problems, as summarized in \Cref{tab:functionals}.  
    Observe that the framework also includes various notions of prediction risks.
    One can use the test error formulation in \Cref{tab:functionals} to obtain the prediction risk at any test point \smash{$(\bx_0,y_0)$}, where \smash{$y_0=\bx_0^{\top}\bbeta_0+\epsilon_0$}, and $\epsilon_0$ may depend on $\bx_0$ and have non-zero mean.
    This also permits the test point to be drawn from a distribution that differs from the training distribution.
    Specifically, when \smash{$\epsilon_0=\fNL(\bx_0)$} but $\bx_0$ has a distribution different from $\bx$, we obtain the prediction error under \emph{covariate shift}.
    Similarly, when \smash{$\epsilon_0\neq \fNL(\bx_0)$} but $\bx_0$ and $\bx$ have the same distribution, we get the prediction error under \emph{label shift}.

    \begin{table}[!t]
        \centering
        \caption{Summary of various generalized risks and their corresponding statistical learning problems.
        The asymptotic equivalences between subsampling and ridge regularization are analyzed in \Cref{thm:equiv-risk} and \Cref{prop:equiv-risk-X} for generalized risks \eqref{eq:generalized-risk} defined through functionals $L_{\bA,\bb}$ for various $\bA$ and $\bb$.}
        \label{tab:functionals}
        \begin{tabular}{ccccc}
            \toprule
             \textbf{Statistical learning problem} &  $L_{\bA,\bb}(\hbeta-\bbeta_0)$ &  $\bA$ & $\bb$ & $\mathrm{nrow}(\bA)$ \\
             \midrule
             vector coefficient estimation & $\hbeta-\bbeta_0$ & $\bI_p$ & $0$ & $p$\\             
             projected coefficient estimation & $\ba^{\top}(\hbeta-\bbeta_0)$ & $\ba^{\top}$ & $0$ & $1$\\
             training error estimation & $\bX\hbeta-\by$ & $\bX$ & $-\bfNL$ & $n$\\
             in-sample prediction & $\bX(\hbeta-\bbeta_0)$ & $\bX$ & $0$ & $n$\\
             out-of-sample prediction &
             $\bx_0^\top \hbeta - y_0$
             & $\bx_0^{\top}$ & $-\epsilon_0$ & $1$\\
             \bottomrule
        \end{tabular}        
    \end{table}

    % \paragraph{Proportional asymptotics.}
    \paragraph{Asymptotic equivalence.}
    For our theoretical analysis, we consider the proportional asymptotics regime, where the ratio of the feature size $p$ to the sample size $n$ tends to a fixed limiting \emph{data aspect ratio} $\phi\in(0,\infty)$.
    To concisely present our results, we will use the elegant framework of asymptotic equivalence  \citep{dobriban_sheng_2020,dobriban_sheng_2021,patil2022bagging}.
    Let \smash{$\bA_p$} and \smash{$\bB_p$} be sequences of (additively) conformable matrices of arbitrary dimensions (including vectors and scalars).
    We say that \smash{$\bA_p$} and \smash{$\bB_p$} are \emph{asymptotically equivalent}, denoted as \smash{$\bA_p \asympequi \bB_p$}, if \smash{$\lim_{p \to \infty} | \tr[\bC_p (\bA_p - \bB_p)] | = 0$} almost surely for any sequence of random matrices \smash{$\bC_p$} with bounded trace norm that are (multiplicatively) conformable and independent of \smash{$\bA_p$} and \smash{$\bB_p$}.
    Note that for sequences of scalar random variables, the definition simply reduces to the typical almost sure convergence of sequences of random variables involved.

\section{Generalized risk equivalences of ensemble estimators}
\label{sec:generalized-risk}

    We begin by examining the risk equivalences among different ensemble ridge estimators for various generalized risks defined as in \eqref{eq:generalized-risk}.
    To prepare for our upcoming results, we impose two structural and moment assumptions on the distributions of the response variable and the feature vector.
    
    \begin{assumption}[Response variable distribution]\label{asm:model}
        Each response variable $y_i$ for $i \in [n]$ has mean $0$ and satisfies \smash{$\EE[|y_i|^{4 + \mu}] \le M_{\mu} < \infty$} for some $\mu > 0$
        and a constant $M_{\mu}$.
    \end{assumption}
    
    \begin{assumption}[Feature vector distribution]\label{asm:feat}
        Each feature vector $\bx_i$ for $i \in [n]$ can be decomposed as \smash{$\bx_i = \bSigma^{1/2}\bz_i$}, where $\bz_i \in \RR^{p}$ contains i.i.d.\ entries $z_{ij}$ for $j \in [p]$ with mean $0$, variance $1$, and satisfy $\EE[|z_{ij}|^{4 + \nu}] \le M_{\nu} < \infty$ for some $\nu > 0$ and a constant $M_{\nu}$, and \smash{$\bSigma \in \RR^{p \times p}$} is a deterministic and symmetric matrix with eigenvalues uniformly bounded between constants $r_{\min}>0$ and $r_{\max}<\infty$.
    \end{assumption}
    
    Note that we do not impose any specific model assumptions on the response variable $y$ in relation to the feature vector $\bx$.
    We only assume bounded moments as stated in \Cref{asm:model}, making all of our subsequent results model-free.
    The zero-mean assumption for $y$ is for simplicity since, in practice, centering can always be done by subtracting the sample mean.
    The bounded moment condition can also be satisfied if one imposes a stronger distributional assumption (e.g., sub-Gaussianity).
    \Cref{asm:feat} on the feature vector is common in the study of random matrix theory \cite{bai2010spectral,el2010spectrum} and the analysis of ridge and ridgeless regression \cite{karoui2011geometric,karoui_2013,dobriban_wager_2018,hastie2022surprises}, which we refer to as ``RMT features'' for brevity.
    
    Given a limiting data aspect ratio \smash{$\phi \in (0, \infty)$} and a limiting \emph{subsample aspect ratio} \smash{$\bar{\psi} \in [\phi, \infty]$},
    our statement of equivalences between different ensemble estimators is defined through certain paths characterized by two endpoints \smash{$(0,\bar{\psi})$} and \smash{$(\bar{\lambda},\phi)$}.
    These endpoints correspond to the subsample ridgeless ensemble and the (non-ensemble) ridge predictor, respectively, with \smash{$\bar{\lambda}$} being the ridge penalty to be defined next.
    For that, let \smash{$H_p$} be the empirical spectral distribution of $\bSigma$: \smash{$H_{p}(r) = p^{-1}\sum_{i=1}^{p} \ind_{\{r_i \le r\}}$}, where $r_i$'s are the eigenvalues of $\bSigma$.
    Consider the following system of equations in $\bar{\lambda}$ and $v$:
    \begin{equation}
        \frac{1}{v} = \bar{\lambda} + \phi\int\frac{r}{1 + v r} \rd H_p(r),
        \quad
        \text{and}
        \quad
         \frac{1}{v}  = \bar{\psi}\int\frac{r}{1 + v r}\rd H_p(r). \label{eq:lambda_bar}
    \end{equation}
    Existence and uniqueness of the solution \smash{$(\bar{\lambda},v) \in [0, \infty]^2$} to the above equations are guaranteed by \Cref{cor:asympequi-scaled-ridge-resolvent}.
Now, define a path \smash{$\cP(\bar{\lambda}; \phi, \bar{\psi})$} that passes through 
    the endpoints $(0, \bar{\psi})$ and $(\bar{\lambda}, \phi)$:
    \begin{equation}
      \cP(\bar{\lambda}; \phi, \bar{\psi})=\big\{(1 - \theta)\cdot(\bar{\lambda}, \phi) + \theta\cdot(0, \bar{\psi}) \mid \theta\in[0, 1]\big\}. \label{eq:path}
    \end{equation}
    We are ready to state our results.
    We consider two cases
    for the generalized risk \eqref{eq:generalized-risk}
    depending on the relationships between $(\bA,\bb)$ and $\bX$.
    In the first case, when both $\bA$ and $\bb$ are independent of the data, \Cref{thm:equiv-risk} shows that the generalized risks are equivalent along the path \eqref{eq:path}.
    
\begin{figure}[!t]
        \centering
        \includegraphics[width=\textwidth]{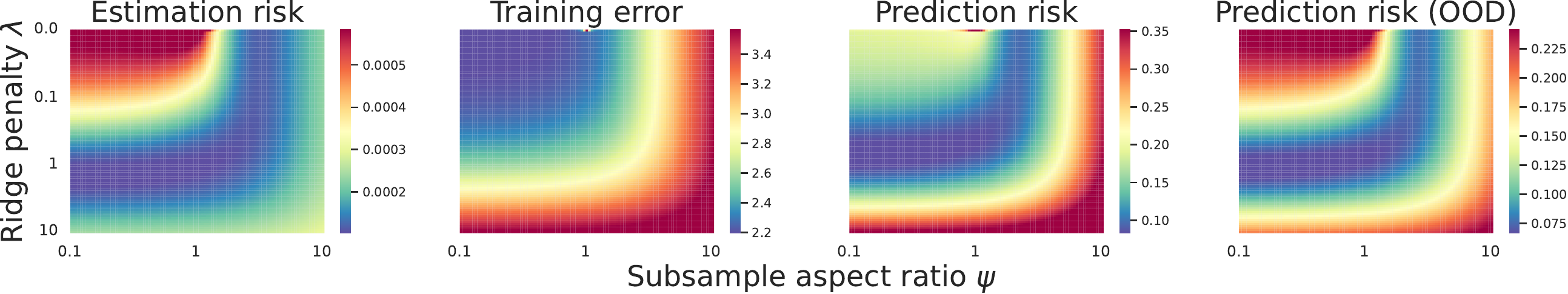}
        \caption{Heat map of the various generalized risks (estimation risk, training error, prediction risk, out-of-distribution (OOD) prediction risk) of full-ensemble ridge estimators (approximated with $M = 100$), for varying ridge penalties $\lambda$ and subsample aspect ratios \smash{$\psi= p/k$} on the log-log scale.
        The data model is described in \Cref{subsec:simu} with $p=500$, $n = 5000$, and \smash{$\phi=p/n=0.1$}.
        Observe that along the same path the values match well for all risks, in line with \Cref{thm:equiv-risk} and~\Cref{prop:equiv-risk-X}.
        }
        \label{fig:risk}
    \end{figure}

    \begin{theorem}[Risk equivalences when \smash{$(\bA,\bb)\indep(\bX,\by)$}]\label{thm:equiv-risk}
        Suppose \Crefrange{asm:model}{asm:feat} hold.
        Let \smash{$(\bA,\bb)$} be independent of \smash{$(\bX,\by)$} such that \smash{$\mathrm{nrow}(\bA)^{-1/2}\|\bA\|_{\oper}$} and \smash{$\|\bb\|_2$} are almost surely bounded.
        Let $n,p\rightarrow\infty$ such that $p/n\rightarrow\phi\in(0,\infty)$.
        For any $\bar{\psi}\in[\phi,+\infty]$, let $\bar{\lambda}$ be as defined in \eqref{eq:lambda_bar}.
        Then, for any pair of $(\lambda_1, \psi_1)$ and $(\lambda_2, \psi_2)$ on the path
        \smash{$\cP(\bar{\lambda}; \phi, \bar{\psi})$} as defined in \eqref{eq:path}, the generalized risk functionals \eqref{eq:generalized-risk} of the full-ensemble estimator are asymptotically equivalent:
        \begin{equation}
            R\big(\hbeta_{\lfloor p/\psi_1\rfloor,\infty}^{\lambda_1};\bA, \bb, \bbeta_0\big)~\asympequi~R\big(\hbeta_{\lfloor p/\psi_2\rfloor,\infty}^{\lambda_2};\bA, \bb, \bbeta_0\big).
        \end{equation}
    \end{theorem}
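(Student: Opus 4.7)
The plan is to expand
\[
\mathrm{nrow}(\bA)\cdot R(\hbeta;\bA,\bb,\bbeta_0) = \hbeta^{\top}\bA^{\top}\bA\,\hbeta \;+\; 2(\bb - \bA\bbeta_0)^{\top}\bA\,\hbeta \;+\; \|\bA\bbeta_0 - \bb\|_{2}^{2},
\]
and to show that each piece coincides (up to $\asympequi$) at any two points on the path~\eqref{eq:path}. The constant depends neither on $\hbeta$ nor on the data, so it is identical at both endpoints. The linear piece is the inner product of $2\bA^{\top}(\bb - \bA\bbeta_0)/\mathrm{nrow}(\bA)$ with $\hbeta$; under the hypotheses on $(\bA,\bb)$ and boundedness of $\|\bbeta_0\|_2$ (which follows from \Cref{asm:model} and \Cref{asm:feat}), this coefficient vector has an a.s.\ bounded $\ell_{2}$-norm and is independent of the data, so the equivalence reduces to a linear functional of the ensemble estimator and follows directly from the structural equivalence \Cref{thm:equiv-estimator-linear}.

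For the quadratic piece, the goal is a matrix-level equivalence $\hbeta^{\lambda}_{k,\infty}\bigl(\hbeta^{\lambda}_{k,\infty}\bigr)^{\top} \asympequi \bT(v)$, where $\bT(v)$ is a deterministic matrix depending on $(\lambda,\psi)$ only through the scalar $v$ solving $1/v = \lambda + \psi\int r/(1+vr)\,\rd H_p(r)$. Writing the full ensemble as a conditional expectation gives
\[
\hbeta^{\lambda}_{k,\infty}\bigl(\hbeta^{\lambda}_{k,\infty}\bigr)^{\top} = \EE_{I,I'}\!\left[\hbeta^{\lambda}_k(\cD_I)\,\hbeta^{\lambda}_k(\cD_{I'})^{\top} \,\Big|\, \cD_n\right],
\]
with $I, I'$ i.i.d.\ draws from $\cI_k$. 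Substituting \eqref{eq:ingredient-estimator} produces two resolvents $(\bX^{\top}\bL_I\bX/k + \lambda\bI_p)^{-1}$ and $(\bX^{\top}\bL_{I'}\bX/k + \lambda\bI_p)^{-1}$ sandwiched around $\bX^{\top}\bL_I\by\by^{\top}\bL_{I'}\bX/k^{2}$. I would replace each resolvent by its anisotropic Marchenko-Pastur surrogate, evaluate $\EE_{I,I'}[\bL_I\by\by^{\top}\bL_{I'}]$ by splitting into $I=I'$ and $I\neq I'$ cases, and use the oracle decomposition $y = \fLI(\bx) + \fNL(\bx)$ together with $\EE[\bx\,\fNL(\bx)] = 0$ to reduce the cross term to expressions involving only $\bbeta_0\bbeta_0^{\top}$, $\bSigma$, $\EE[\fNL(\bx)^{2}]$, and $v$.

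The key invariance is that $v$ is constant along the path. Parametrizing $\lambda(\theta) = (1-\theta)\bar{\lambda}$ and $\psi(\theta) = (1-\theta)\phi + \theta\bar{\psi}$ for $\theta \in [0,1]$ and taking the convex combination of the two relations in \eqref{eq:lambda_bar} at the common value $v$ gives
\[
\lambda(\theta) + \psi(\theta)\int\frac{r}{1+vr}\,\rd H_p(r) = (1-\theta)\Bigl[\bar{\lambda} + \phi\!\int\!\tfrac{r}{1+vr}\,\rd H_p(r)\Bigr] + \theta\Bigl[\bar{\psi}\!\int\!\tfrac{r}{1+vr}\,\rd H_p(r)\Bigr] = \tfrac{1}{v},
\]
so the same $v$ solves the full-ensemble fixed-point equation at $(\lambda(\theta),\psi(\theta))$ for every $\theta$. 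Hence $\bT(v)$, and therefore the deterministic equivalent of the quadratic form, is identical along the path; combined with the linear-term equivalence, this yields the claim.

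The main obstacle I anticipate is the double-resolvent analysis: although $\bL_I$ and $\bL_{I'}$ are conditionally independent given $\cD_n$, the two resolvents share the same $\bX$ and are statistically entangled, precluding a direct product deterministic equivalent. Decoupling will require extending the leave-one-out machinery of \citet{bartlett_montanari_rakhlin_2021} to the ensemble setting, likely by iteratively conditioning on one of $I, I'$ and applying anisotropic resolvent estimates under only $4+\mu$ moments. A secondary subtlety is the correlated nonlinear component $\fNL(\bx)$: despite population orthogonality to $\bx$, empirical cross terms such as $n^{-1}\sum_i \bx_i\fNL(\bx_i)$ must be absorbed into the deterministic bookkeeping using the higher-moment bounds from \Cref{asm:model} and \Cref{asm:feat}, which requires careful control to obtain uniform convergence under the minimal moment assumptions.
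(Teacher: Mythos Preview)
Your proposal is correct and follows essentially the same strategy as the paper: handle the linear-in-$\hbeta$ piece via the structural equivalence of \Cref{thm:equiv-estimator-linear}, show the quadratic piece has a deterministic equivalent depending on $(\lambda,\psi)$ only through the fixed point $v$, and use the path-invariance of $v$ (which you verify correctly and which is exactly the paper's contour lemma). The paper organizes the quadratic analysis a bit differently---decomposing first via $\by=\bX\bbeta_0+\bfNL$ into a bias term $\bbeta_0^{\top}\bB_k^{\lambda}\bA^{\top}\bA\bB_k^{\lambda}\bbeta_0$, a variance term $\bfNL^{\top}(\bA_k^{\lambda})^{\top}\bA^{\top}\bA\bA_k^{\lambda}\bfNL$, and a cross term shown to vanish, each handled by a dedicated double-resolvent lemma---and treats the boundary case $\lambda=0$ via an equicontinuity/Moore--Osgood argument you do not mention, but these are precisely the technical steps your ``main obstacle'' paragraph anticipates.
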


    In other words, \Cref{thm:equiv-risk} establishes the equivalences of subsampling and ridge regularization in terms of generalized risk for many statistical learning problems, encompassing coefficient estimation, coefficient confidence interval, and test error estimation.
    All of these problems fall in the category when $(\bA, \bb)$ are independent of $\bX$.
    However, there are other statistical learning problems not covered in \Cref{thm:equiv-risk}, such as the in-sample prediction risk and the training error, which correspond to the case when $\bA=\bX$.
    Fortunately, the equivalences also apply to them, as summarized in \Cref{prop:equiv-risk-X}.

    \begin{proposition}[Risk equivalences when \smash{$\bA=\bX$}]\label{prop:equiv-risk-X}
        % Suppose \Crefrange{asm:model}{asm:feat} hold.
        Under \Crefrange{asm:model}{asm:feat},
        when \smash{$\bA=\bX$}, the conclusion in \Cref{thm:equiv-risk} continue to hold for the cases of in-sample prediction risk~(\smash{$\bb=\zero$}) and training error~(\smash{$\bb=-\bfNL$}).
    \end{proposition}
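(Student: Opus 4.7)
The obstruction to applying \Cref{thm:equiv-risk} directly when $\bA = \bX$ is that the feature matrix $\bX$ and the ensemble estimator $\hbeta_{k,\infty}^{\lambda}$ depend on the same data. My plan is a per-sample leave-one-out reduction: unfold the two risks as averages over $j \in [n]$, isolate the dependence of each summand on $(\bx_j, y_j)$ through a Sherman-Morrison identity on the rank-one update $\bx_j \bx_j^\top$, and then apply \Cref{thm:equiv-risk} conditional on $(\bx_j, y_j)$ once this dependence has been separated.

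Concretely, I would write the in-sample risk as $n^{-1}\sum_j (\bx_j^\top(\hbeta_{k,\infty}^{\lambda} - \bbeta_0))^2$, and similarly the training error via $\bx_j^\top \hbeta_{k,\infty}^{\lambda} - y_j = \bx_j^\top(\hbeta_{k,\infty}^{\lambda} - \bbeta_0) - \fNL(\bx_j)$. For each $j$, split the full-ensemble expectation over the random subsample $I$ into the conditional pieces $j \in I$ (with weight $k/n$) and $j \notin I$ (with weight $1 - k/n$). The off-subsample piece has $\hbeta_k^{\lambda}(\cD_I)$ independent of $(\bx_j, y_j)$, so conditioning on $(\bx_j, y_j)$ and applying \Cref{thm:equiv-risk} with $\bA = \bx_j^\top$ (the limiting aspect ratios $\phi$ and $\psi$ are unchanged by removing one observation) delivers the path equivalence. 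For the on-subsample piece, Sherman-Morrison yields the standard ridge LOO identity
\[
\bx_j^\top \hbeta_k^{\lambda}(\cD_I) \;=\; \frac{\bx_j^\top \widetilde{\bbeta}_{I,j} + h_{I,j}\, y_j}{1 + h_{I,j}},
\]
where $\widetilde{\bbeta}_{I,j} = (\bX^\top \bL_{I\setminus\{j\}}\bX + k\lambda \bI)^{-1}\bX^\top \bL_{I\setminus\{j\}}\by$ and $h_{I,j} = \bx_j^\top(\bX^\top \bL_{I\setminus\{j\}}\bX + k\lambda\bI)^{-1}\bx_j$, both independent of $(\bx_j, y_j)$.

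The key auxiliary step is showing that the leverage $h_{I,j}$ concentrates, uniformly in $j$ and $I$, around a deterministic limit invariant along $\cP(\bar\lambda;\phi,\bar\psi)$. The quadratic-form concentration available under the $(4+\nu)$-moment condition on $\bz_i$ (via the truncation tools from \cite{bartlett_montanari_rakhlin_2021} that the paper already employs) combined with the deterministic equivalent of the subsample ridge resolvent recorded in \Cref{cor:asympequi-scaled-ridge-resolvent} yields $h_{I,j} \asympequi \tr\bigl(\bSigma\, (\bX^\top \bL_{I\setminus\{j\}}\bX + k\lambda\bI)^{-1}\bigr)$, whose limit is governed by the fixed-point system \eqref{eq:lambda_bar}; that system is exactly what defines the path, so the limits of $h_{I,j}$ and $h_{I,j}/(1+h_{I,j})$ are constant along $\cP(\bar\lambda;\phi,\bar\psi)$. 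Substituting these scalar limits turns the on-subsample contribution into a linear functional of a $(\bx_j, y_j)$-independent ensemble, which again falls under \Cref{thm:equiv-risk}. Squaring, averaging over $j$, and handling the training-error cross-term $n^{-1}\sum_j \bx_j^\top(\hbeta_{k,\infty}^{\lambda} - \bbeta_0)\fNL(\bx_j)$ through the same LOO unfolding (which decouples $\bx_j$ and $\fNL(\bx_j)$ from the estimator), the per-summand equivalences aggregate into the full risk via the trace-bounded test-matrix characterization of $\asympequi$.

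The main obstacle is making the LOO substitution uniform in $j$ and $I$ so that the per-sample approximations combine into a sharp equivalence at the $O(1)$-scale of the risk rather than accumulating errors, and confirming that the size shifts $k \to k-1$ (inside $\widetilde{\bbeta}_{I,j}$) and the conditioning operation do not perturb the solutions of \eqref{eq:lambda_bar} enough to break path invariance. The linchpin is that the same path $\cP(\bar\lambda;\phi,\bar\psi)$ controls both the primary ensemble estimator and all auxiliary scalars (leverages and resolvent traces) arising from the LOO expansion.
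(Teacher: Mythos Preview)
Your leave-one-out route is genuinely different from the paper's. The paper does not unfold the risk per sample; instead it keeps the bias/variance/cross decomposition from the proof of \Cref{thm:equiv-risk} and handles the dependence of $\bA=\bX$ by establishing \emph{new} deterministic equivalents for resolvent products with $\hSigma$ sandwiched in the middle (specifically $\lambda^2\EE_{I_1,I_2}[\bM_1\hSigma\bM_2]$ for the bias, and the GCV-type identity $\bB_k^{\lambda}\hSigma\bB_k^{\lambda}=(\bX\bA_k^{\lambda}-\bI_n)^\top(\bX\bA_k^{\lambda}-\bI_n)/n$ for the variance). Everything then reduces to functions of the single fixed point $v_p(-\lambda;\psi)$, which is path-invariant. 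No per-sample or leverage argument appears.

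Your plan has a concrete gap at the step ``apply \Cref{thm:equiv-risk} with $\bA=\bx_j^\top$.'' The hypothesis of \Cref{thm:equiv-risk} requires $\mathrm{nrow}(\bA)^{-1/2}\|\bA\|_{\mathrm{op}}$ to be bounded; for a single row $\bA=\bx_j^\top$ this is $\|\bx_j\|_2\asymp\sqrt{p}$, which diverges. (Equivalently, the deterministic equivalents you would be invoking need $\bA^\top\bA$ to have bounded operator norm, and $\bx_j\bx_j^\top$ does not.) The same obstruction blocks invoking \Cref{thm:equiv-estimator-linear} with $\ba=\bx_j$. The natural repair is to first concentrate the quadratic form $(\bx_j^\top(\hbeta^{(-j)}-\bbeta_0))^2$ around $(\hbeta^{(-j)}-\bbeta_0)^\top\bSigma(\hbeta^{(-j)}-\bbeta_0)$ using independence of $\bx_j$ and $\hbeta^{(-j)}$, and only then invoke \Cref{thm:equiv-risk} with the \emph{bounded} choice $\bA=\bSigma^{1/2}$; but this is an additional argument, not a black-box application of the theorem, and it has to be made uniform in $j$ so that the $n^{-1}\sum_j$ aggregation goes through at the $O(1)$ scale. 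You correctly flag that uniformity as the main obstacle, but note that without it the per-summand equivalences, each asserting only $o(1)$ error, do not automatically sum to $o(1)$ after averaging $n$ terms whose errors may be correlated. The paper's resolvent-level approach sidesteps both the norm and the uniformity issues entirely.
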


    Both \Cref{thm:equiv-risk} and \Cref{prop:equiv-risk-X} provide specific second-order equivalences for the full-ensemble ridge estimators, in the sense that the quadratic functionals of the estimators associated with $(\bA,\bb)$ are asymptotically the same.
    See \Cref{fig:risk} for visual illustrations of both equivalences.
    These statements are presented separately because their proofs differ, with the proof for the dependent case being more intricate.
    We note that by combining the risk functionals in \Cref{tab:functionals}, it is possible to further extend the equivalences to other types of functionals not directly covered by statements above, using composition and continuous mapping.
    For example, it is not difficult to show that similar equivalences hold for the generalized cross-validation in the full ensembles \citep{du2023gcv}.
    This follows by combining the result for training error from \Cref{prop:equiv-risk-X} and for denominator concentration proved in Lemma 3.4 of \citep{du2023gcv}.

    \Cref{thm:equiv-risk} generalizes the existing equivalence results for the prediction risk \citep{lejeune2020implicit,patil2022bagging,du2023gcv} to include general risk functionals under milder assumptions. 
    As summarized in \Cref{tab:summary}, we allow for a general response model and feature covariance without assuming convergence of the spectral distributions $H_p$ to a fixed distribution $H$.
    This flexibility is achieved by showing equivalences of the underlying resolvents in the estimators rather than deriving the limiting functionals as done in previous works.
    To extend results allowing for a general response, we generalize certain concentration results by leveraging tools from \cite{bartlett_montanari_rakhlin_2021} to handle the dependency between the linear and non-linear components. 
    
    As alluded to earlier, it is worth noting that the generalized risk equivalences only hold in the full ensemble.
    However, by using the risk decomposition obtained from Lemma S.1.1 of \citep{patil2022bagging} for finite ensembles, one can obtain the following relationship along the path in \Cref{thm:equiv-estimator-linear}:
    \[
    R\big(\hbeta_{\lfloor p/\psi_1\rfloor,M}^{\lambda_1};\bA, \bb, \bbeta_0\big) - R\big(\hbeta_{\lfloor p/\psi_2\rfloor,M}^{\lambda_2};\bA, \bb, \bbeta_0\big) ~\asympequi~ \Delta / M,
    \] for some $\Delta$ (independent of $M$) that is eventually almost surely bounded. 
    In other words, the difference between any two estimators on the same path scales as $1/M$ when $n$ is sufficiently large, which is also numerically verified in \Cref{fig:risk-M}.

\begin{figure}[!t]
    \centering
    \includegraphics[width=\textwidth]{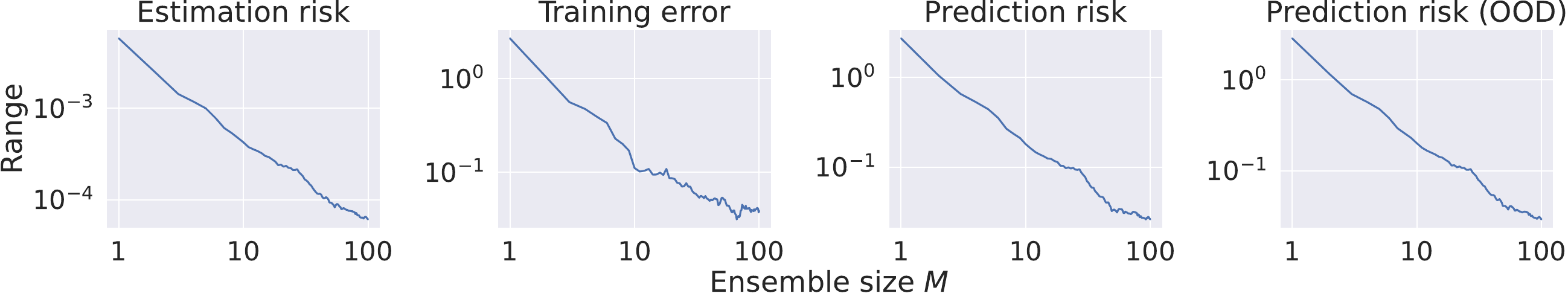}
    \caption{The range (the difference between the maximum and the maximum along the path) of the finite-sample generalized risk of $M$-ensemble ridge estimators on the path through $(\lambda,\psi)=(0,2)$, for varying ensemble size $M$ under the same setting as in \Cref{fig:risk}.
    The generalized risks include the estimation risk, the training error, the prediction risk, and out-of-distribution (OOD) prediction risk.
    }
    \label{fig:risk-M}
\end{figure}
    
\section{Structural equivalences of ensemble estimators}
\label{sec:structural-equivalence}

    The equivalences established in the previous section only hold for the full-ensemble estimators in a second-order sense.
    We can go a step further and ask if there exist any equivalences for the finite ensemble and if there are any equivalences at the estimator coordinate level between the estimators.
    More specifically, we aim to inspect whether each coordinate of the $p$-dimensional estimated coefficients asymptotically equals.
    This section establishes structural relationships between the estimators in a first-order sense that any bounded linear functionals of them are asymptotically the same.

    \begin{theorem}[Structural equivalences]
    \label{thm:equiv-estimator-linear}
        Suppose~\Crefrange{asm:model}{asm:feat} hold.
        Let $n,p\rightarrow\infty$ with $p/n\rightarrow\phi\in(0,\infty)$.
        For any \smash{$\bar{\psi}\in[\phi,+\infty]$},
        let $\bar{\lambda}$ be as in \eqref{eq:lambda_bar}.
        Then, for any $M \in \NN \cup \{ \infty \}$ and any pair of $(\lambda_1, \psi_1)$ and $(\lambda_2, \psi_2)$ on the path \eqref{eq:path}, the $M$-ensemble estimators are asymptotically equivalent:
        \begin{equation}
           \hbeta_{\lfloor p/\psi_1\rfloor, M}^{\lambda_1}~\asympequi~\hbeta_{\lfloor p/\psi_2\rfloor, M}^{\lambda_2}.
           \label{eq:beta-equiv}
        \end{equation}
    \end{theorem}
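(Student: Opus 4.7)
The plan is to reduce the structural equivalence to a first-order equivalence for a \emph{single} subsample ridge estimator and then lift to any $M$-ensemble by linearity of the trace. The backbone is the algebraic observation that the scalar $v$ defined in~\eqref{eq:lambda_bar} is invariant along the entire path $\cP(\bar\lambda;\phi,\bar\psi)$. Extend~\eqref{eq:lambda_bar} to arbitrary $(\lambda,\psi)$ via $1/v=\lambda+\psi\int r/(1+vr)\,\rd H_p(r)$. The two endpoint equations give $\int r/(1+\bar v\, r)\,\rd H_p(r)=1/(\bar v\bar\psi)$ and $\bar\lambda=(\bar\psi-\phi)/(\bar v\bar\psi)$; substituting the convex combination $(\lambda,\psi)=((1-\theta)\bar\lambda,(1-\theta)\phi+\theta\bar\psi)$ with $v=\bar v$ collapses the right-hand side back to $1/\bar v$ after elementary manipulation. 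Uniqueness from~\Cref{cor:asympequi-scaled-ridge-resolvent} then gives $v(\lambda,\psi)\equiv\bar v$ throughout the path.

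Next, I would establish a single-subsample deterministic equivalent whose dependence on $(\lambda,\psi)$ is \emph{only} through $v$. Writing $\bM=(\bX^\top\bL_I\bX/k+\lambda\bI_p)^{-1}$ and using $\by=\bX\bbeta_0+\bfNL$, the estimator decomposes as
\[
\hbeta^\lambda_k(\cD_I)=\bbeta_0-\lambda\bM\bbeta_0+\bM\bX^\top\bL_I\bfNL/k.
\]
I would then show (a) $\bM\asympequi G(\bSigma;v)$ for a deterministic functional $G$ depending on $(\lambda,\psi)$ only through $v$, by extending the resolvent concentration tools of~\cite{bartlett_montanari_rakhlin_2021} to the subsampled design $\bL_I\bX$ at scale $k$, and (b) the cross term $\bM\bX^\top\bL_I\bfNL/k$ is $\asympequi$-negligible, via a leave-one-out decomposition that exploits the defining orthogonality $\EE[\bx_i\fNL(\bx_i)]=0$ of $\bbeta_0$. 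Together (a)--(b) give $\hbeta^\lambda_k(\cD_I)\asympequi\Psi(\bSigma,\bbeta_0;v)$ for a common functional $\Psi$, and the invariance of $v$ then yields $\hbeta^{\lambda_1}_{k_1}(\cD_{I_1})\asympequi\hbeta^{\lambda_2}_{k_2}(\cD_{I_2})$ at any two points on the path, where $k_j=\lfloor p/\psi_j\rfloor$.

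To lift to $M$-ensembles, for any finite $M$ and any conformable, bounded-trace-norm $\bC_p$ independent of the subsampling, linearity of the trace gives
\[
\tr\bigl[\bC_p\bigl(\hbeta^{\lambda_1}_{k_1,M}-\hbeta^{\lambda_2}_{k_2,M}\bigr)\bigr]=\frac{1}{M}\sum_{\ell=1}^M\tr\bigl[\bC_p\bigl(\hbeta^{\lambda_1}_{k_1}(\cD_{I_\ell^{(1)}})-\hbeta^{\lambda_2}_{k_2}(\cD_{I_\ell^{(2)}})\bigr)\bigr],
\]
which vanishes almost surely term by term. For $M=\infty$, I use $\hbeta^\lambda_{k,\infty}=\EE[\hbeta^\lambda_k(\cD_I)\mid\cD_n]$ and interchange the trace with the conditional expectation, applying the single-subsample equivalent uniformly in $I\in\cI_k$. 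The main technical obstacle is part~(b): in the model-free setting, $\bfNL$ may depend arbitrarily on $\bX$ and only linear orthogonality is available, so controlling the interaction with the subsampled resolvent $\bM$ is exactly where the extension of~\cite{bartlett_montanari_rakhlin_2021} flagged in the introduction does the heavy lifting. Once (b) is in place, the invariance of $v$ and the ensemble lift are essentially algebraic.
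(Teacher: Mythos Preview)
Your proposal follows essentially the same route as the paper: the path invariance of $v$ (your algebraic verification matches the paper's \Cref{lem:v-equiv-path}), the decomposition $\hbeta=\bbeta_0-\lambda\bM\bbeta_0+\bM\bX^\top\bL_I\bfNL/k$, the deterministic equivalent $\lambda\bM\asympequi(v\bSigma+\bI_p)^{-1}$, and the leave-one-out argument for the $\bfNL$ term (the paper's \Cref{lem:concen-linearform-uncorrelated}) are all exactly what the paper does. The ensemble lift is also handled the same way, with the $M=\infty$ interchange justified in the paper by a result from~\cite{du2023gcv} that bridges ``pointwise in $I$'' to ``$\EE_{I\sim\cI_k}$''.

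There is, however, one genuine omission. Your step~(a) relies on the basic ridge resolvent equivalent $\lambda\bM\asympequi(v\bSigma+\bI_p)^{-1}$, which is established only for $\lambda>0$. The ridgeless endpoint $(0,\bar\psi)$ of the path---which is the whole point of connecting subsampling to explicit ridge---is not covered by this, and you do not address it. The paper treats $\lambda_1\lambda_2=0$ as a separate case: it shows that both the empirical trace functional and its deterministic target are equicontinuous in $\lambda$ over $[0,\lambda_{\max}]$ (bounding derivatives via operator-norm control of $(\hSigma_I+\lambda\bI_p)^{-2}\hSigma_I$ and of $\partial v/\partial\lambda$), then applies Moore--Osgood to swap the limits $n\to\infty$ and $\lambda\to 0^+$. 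This step is not automatic---in particular it requires $\psi_1\neq 1$ so that the smallest nonzero eigenvalue of $\hSigma_I$ stays bounded away from zero---and your proposal should flag it.
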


    Put another way, \Cref{thm:equiv-estimator-linear} implies that for any fixed ensemble size $M$, the $M$-ensemble ridgeless estimator at the limiting aspect ratio $\bar{\psi}$ is equivalent to ridge regression at the regularization level $\bar{\lambda}$.
    This equivalence is visually illustrated in \Cref{fig:estimator}, where the data is simulated from an anisotropic and nonlinear model (see \Cref{subsec:simu} for more details).
    Furthermore, the contour path $ \cP(\bar{\lambda}; \phi, \bar{\psi})$ connecting the endpoints $(0, \bar{\psi})$ and $(\bar{\lambda}, \phi)$ is a straight line, although it may have varying slopes.
    Along any path, all $M$-ensemble estimators at the limiting aspect ratio $\psi$ and ridge penalty $\lambda$ are equivalent for all $(\lambda,\psi)\in \cP(\bar{\lambda}; \phi, \bar{\psi})$.
    The equivalences here are for any arbitrary linear combinations of the estimators.
    In particular, this implies that the predicted values (or even any continuous function applied to them due to the continuous mapping theorem) of any test point will eventually be the same, almost surely, with respect to the training data.
    Finally, we remark that in the statement of \Cref{thm:equiv-estimator-linear}, the equivalence is defined on extended reals in order to incorporate the ridgeless case when $\bar{\psi}=1$.

    \begin{figure}[!t]
    \centering
    \includegraphics[width=0.9\textwidth]{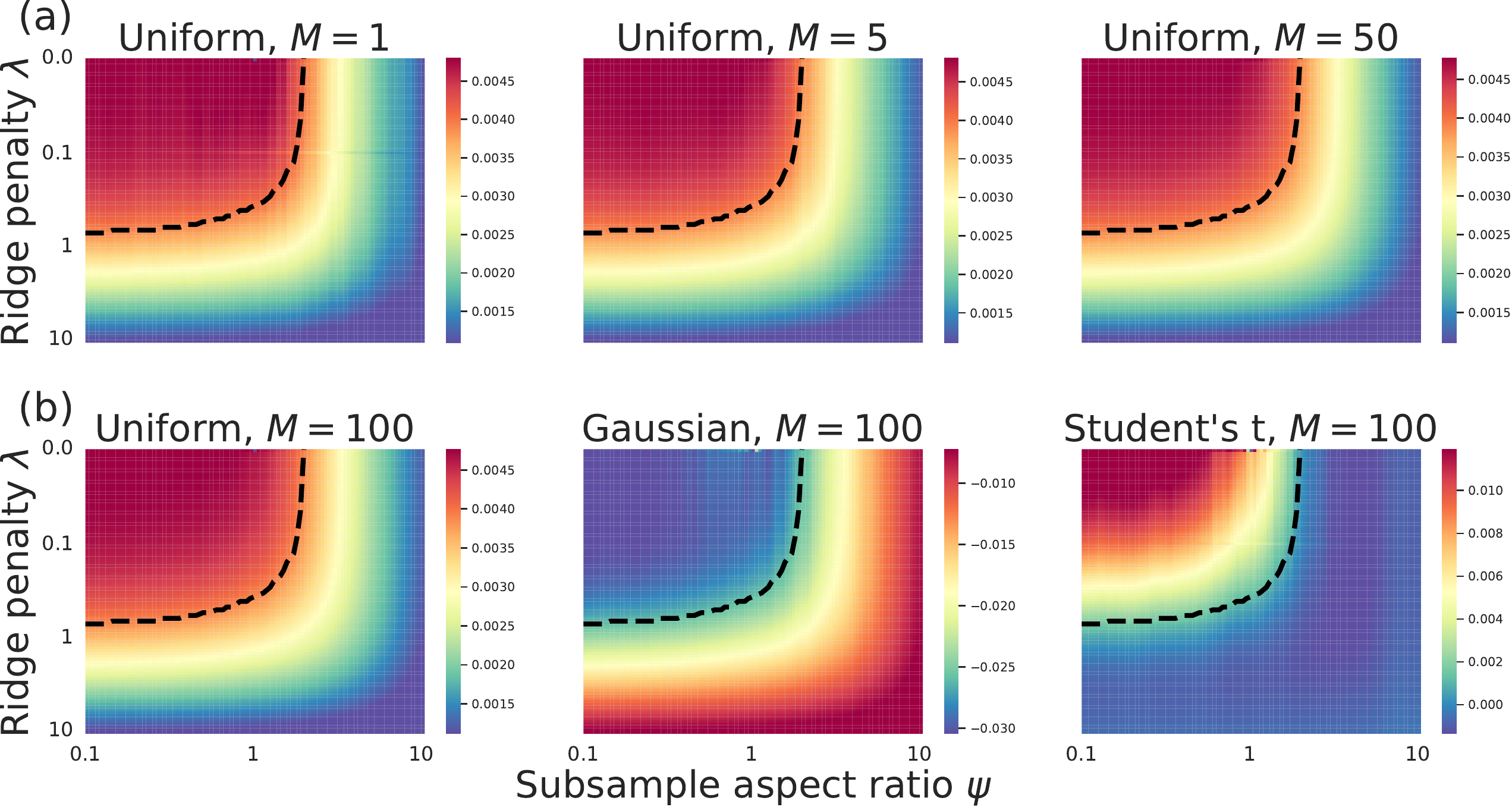}
    \caption{Heat map of the values of various linear functionals of ensemble ridge estimators \smash{$\ba^{\top} \hbeta_{k,M}^{\lambda}$}, for varying ridge penalties $\lambda$, subsample aspect ratios \smash{$\psi= p/k$}, and ensemble size $M$ on the log-log scale.
    The data model is described in \Cref{subsec:simu} with $p=1000$, $n = 10000$, and \smash{$\phi=p/n=0.1$}.
    (a) The values of the uniformly weighted projection for varying ensemble sizes ($M$ = $1$, $5$, and $50$).
    (b) The values of $M=100$ for varying projection vectors (uniformly weighted, random Gaussian, and random student's $t$).
    The black dashed line is estimated based on \Cref{prop:data-path} with \smash{$\bar{\psi}=2$}. 
    Observe that the values along the data-dependent paths are indeed very similar, in line with~\Cref{prop:data-path}.}
    \label{fig:estimator}
    \end{figure}
    
    The paths \eqref{eq:path} in \Cref{thm:equiv-estimator-linear} are defined via the spectral distribution $H_p$, which requires knowledge of $\bSigma$.
    This is often difficult to obtain in practice from the observed data.
    Fortunately, we can provide an alternative characterization for the path \eqref{eq:path} solely through the data, as summarized in \Cref{prop:data-path}.
    
    \begin{proposition}[Data-dependent equivalence path characterization]
        \label{prop:data-path}
        Suppose \Crefrange{asm:model}{asm:feat} hold.
        Define $\phi_n = p / n$.
        Let $k \le n$ be the subsample size and denote by $\bar{\psi}_n = p / k$.
        For any $M \in \NN \cup \{ \infty \}$, let $\bar{\lambda}_n$ be the value that satisfies the following equation in ensemble ridgeless and ridge gram matrices:
        \begin{equation}
        \label{eq:data-dependent-equation}
            \frac{1}{M}\sum_{\ell=1}^{M} \frac{1}{k}\tr\left[\left(\frac{1}{k}
  \bL_{I_{\ell}}{\bX}{\bX}^{\top}\bL_{I_{\ell}}
     \right)^+\right]
    =  \frac{1}{n}\tr\left[\left(\frac{1}{n}{\bX}{\bX}^{\top} + \bar{\lambda}_n \bI_n\right)^{-1}\right].
        \end{equation}
        Define the data-dependent path \smash{$\cP_n = \cP(\bar{\lambda}_n; \phi_n, \bar{\psi}_n)$}.
        Then, the conclusion in \Cref{thm:equiv-estimator-linear} continues to hold if we replace the (population) path $\cP$ by the (data-dependent) path $\cP_n$.
    \end{proposition}

    The term ``data-dependent path'' signifies that we can estimate the level of implicit regularization induced by subsampling solely based on the observed data by solving \eqref{eq:data-dependent-equation}.
    We remark that \eqref{eq:data-dependent-equation} always has at least one solution for a given triplet of $(n,k,p)$.
    This is because the right-hand side of \eqref{eq:data-dependent-equation} is monotonically decreasing in $\lambda$, and the left-hand side always lies within the range of the right-hand side. 
    The powerful implication of the characterization in \Cref{prop:data-path} is that it enables practical computation of the path using real-world data.
    In \Cref{sec:extensions}, we will demonstrate this approach on various real-world datasets, allowing us to predict an equivalent amount of explicit regularization matching the implicit regularization due to subsampling.

    One natural extension of the results is to consider the generalized ridge as a base estimator~\eqref{eq:ingredient-estimator}:
    \begin{equation}
    \label{eq:generalized-ridge-base}
        \hbeta^{\lambda, \bG}_{k}(\cD_I) 
            = \argmin\limits_{\bbeta\in\RR^p}
            \frac{1}{k}\sum_{i \in I} (y_i  - \bx_i^\top \bbeta)^2
            + \lambda \|\bG^{\frac{1}{2}} \bbeta\|_2^2 ,
    \end{equation}
    where $\bG\in\RR^{p\times p}$ is a positive definite matrix.
    The equivalences still hold, albeit on different paths: 
    \begin{corollary}[Equivalences for generalized ridge regression]\label{cor:gen-ridge}
        Suppose \Crefrange{asm:model}{asm:feat} hold. Let $\bG\in\RR^{p\times p}$ be a deterministic and symmetric matrix with eigenvalues uniformly bounded between constants $g_{\min}>0$ and $g_{\max}<\infty$. Let $n,p\rightarrow\infty$ such that $p/n\rightarrow\phi\in(0,\infty)$.
        For any $\bar{\psi}\in[\phi,+\infty]$,
        let $\bar{\lambda}$ be as defined in \eqref{eq:lambda_bar} with $H_p$ replaced $\tilde{H}_p$, the empirical spectral distribution of \smash{$\bG^{-1/2}\bSigma\bG^{-1/2}$}.
        Then, the conclusions in \Cref{thm:equiv-risk,thm:equiv-estimator-linear} continue to hold for the generalized ridge ensemble predictors.
    \end{corollary}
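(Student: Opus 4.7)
} The plan is to reduce the generalized-ridge problem to standard ridge via a linear change of variables and then invoke Theorems~\ref{thm:equiv-risk} and~\ref{thm:equiv-estimator-linear} on the transformed data. Substituting $\tilde{\bbeta} = \bG^{1/2}\bbeta$ in \eqref{eq:generalized-ridge-base} shows that the generalized-ridge estimator on any subsample $\cD_I$ equals $\bG^{-1/2}$ times the standard ridge estimator (at the same $\lambda$) fitted on the transformed subsample $\tilde{\cD}_I = \{(\tilde{\bx}_i, y_i) : i \in I\}$, where $\tilde{\bx}_i = \bG^{-1/2}\bx_i$; averaging over $I$ gives $\hbeta^{\lambda,\bG}_{k,M} = \bG^{-1/2}\tilde{\hbeta}^{\lambda}_{k,M}$ for every $M \in \NN \cup \{\infty\}$. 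The transformed design satisfies $\tilde{\bx}_i = (\bG^{-1/2}\bSigma^{1/2})\bz_i$, i.e., it is of the form ``deterministic matrix times an i.i.d.\ vector with bounded $4+\nu$ moments''; the effective covariance is $\tilde{\bSigma} = \bG^{-1/2}\bSigma\bG^{-1/2}$, whose eigenvalues lie in $[r_{\min}/g_{\max}, r_{\max}/g_{\min}]$, so Assumption~\ref{asm:feat} continues to hold after rebranding. The fixed-point system \eqref{eq:lambda_bar} for the transformed problem is then exactly the one stated in the corollary with $\tilde{H}_p$ in place of $H_p$, which pins down the equivalence path.

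Next, I would transport the structural conclusion of Theorem~\ref{thm:equiv-estimator-linear}, applied to the transformed ensembles, through the deterministic left multiplication by $\bG^{-1/2}$. If $\tilde{\hbeta}^{\lambda_1}_{\lfloor p/\psi_1\rfloor,M} \asympequi \tilde{\hbeta}^{\lambda_2}_{\lfloor p/\psi_2\rfloor,M}$ along the $\tilde{H}_p$-defined path, then for any test matrix $\bC_p$ of bounded trace norm independent of the data, $\bC_p\bG^{-1/2}$ is also of bounded trace norm (since $\|\bG^{-1/2}\|_{\oper} \le g_{\min}^{-1/2}$). Absorbing $\bG^{-1/2}$ into the test matrix in the definition of $\asympequi$ yields $\hbeta^{\lambda_1,\bG}_{\lfloor p/\psi_1\rfloor,M} \asympequi \hbeta^{\lambda_2,\bG}_{\lfloor p/\psi_2\rfloor,M}$.

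For the risk equivalences, I would write
\[
\hbeta^{\lambda,\bG}_{k,M} - \bbeta_0 = \bG^{-1/2}\bigl(\tilde{\hbeta}^{\lambda}_{k,M} - \bG^{1/2}\bbeta_0\bigr),
\]
and note that $\bG^{1/2}\bbeta_0 = \EE[\tilde{\bx}\tilde{\bx}^\top]^{-1}\EE[\tilde{\bx}y]$ is exactly the best linear projection of $y$ onto $\tilde{\bx}$. Hence $R(\hbeta^{\lambda,\bG}_{k,M}; \bA, \bb, \bbeta_0)$ equals the transformed generalized risk with linear functional $(\bA\bG^{-1/2}, \bb)$. The new coefficient $\bA\bG^{-1/2}$ remains independent of $(\bX,\by)$ and satisfies $\mathrm{nrow}(\cdot)^{-1/2}\|\cdot\|_{\oper}$ almost surely bounded, so Theorem~\ref{thm:equiv-risk} applies; the case $\bA = \bX$ covered by Proposition~\ref{prop:equiv-risk-X} transfers similarly, since $\bA\bG^{-1/2} = \bX\bG^{-1/2} = \tilde{\bX}$.

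The one point meriting care is the claim above that the base theorems apply to the transformed design even though $\bG^{-1/2}\bSigma^{1/2}$ is generally not a symmetric square root of $\tilde{\bSigma}$. This is essentially bookkeeping: the sample Gram matrix $\tilde{\bX}^\top\bL_I\tilde{\bX}/k$ and its resolvent, which are the only design-dependent quantities entering the proofs of Theorems~\ref{thm:equiv-risk} and~\ref{thm:equiv-estimator-linear}, depend on $\bG^{-1/2}\bSigma^{1/2}$ only through $(\bG^{-1/2}\bSigma^{1/2})(\bG^{-1/2}\bSigma^{1/2})^\top = \tilde{\bSigma}$, by the standard identity that $\bA^\top\bA$ and $\bA\bA^\top$ share the nonzero spectrum. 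Hence the base proofs carry through verbatim once rewritten in terms of the effective spectral distribution $\tilde{H}_p$.
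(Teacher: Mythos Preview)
Your proposal is correct and follows essentially the same route as the paper: reduce the generalized ridge to standard ridge via the substitution $\tilde{\bbeta}=\bG^{1/2}\bbeta$ (equivalently, transforming features by $\bG^{-1/2}$), check that the transformed design has effective covariance $\tilde{\bSigma}=\bG^{-1/2}\bSigma\bG^{-1/2}$ so the base theorems apply with $H_p$ replaced by $\tilde{H}_p$, and transport the equivalences through the bounded-norm left multiplication by $\bG^{-1/2}$. You are in fact somewhat more explicit than the paper on two points---the non-symmetric square root issue (which you handle via the Gram-matrix/spectrum argument) and the identification of $\bG^{1/2}\bbeta_0$ with the best linear projection in the transformed coordinates---both of which the paper leaves implicit.
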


    Another extension of our results is to incorporate subquadratic risk functionals in \Cref{thm:equiv-risk} to derive equivalences of the cumulative distribution functions of the predictive error distributions associated with the estimators along the equivalence paths. One can use such equivalences for downstream inference questions.
    Finally, while our statements are asymptotic in nature, we expect a similar analysis as done in \citep{cheng2022dimension} can yield finite-sample statements.
    We leave these extensions for the future.

\section{Implications of equivalences}\label{sec:implication}

    The generalized risk equivalences establish a connection between the risks of different ridge ensembles, including the ridge predictors on full data and the full-ensemble ridgeless predictors.
    These connections enable us to transfer properties from one estimator to the other.
    For example, by examining the impact of subsampling on the estimator's performance, we can better understand how to optimize the ridge penalty.
    We will next provide an example of this.
    Many common methods, such as ridgeless or lassoless predictors, have been recently shown to exhibit non-monotonic behavior in the sample size or the limiting aspect ratio. 
    An open problem raised by \citet[Conjecture 1]{nakkiran2021optimal} asks whether the prediction risk of the ridge regression with optimal ridge penalty $\lambda^*$ is monotonically increasing in the data aspect ratio $\phi = p / n$.
    The non-monotonicity risk behavior implies that more data can hurt performance, and it's important to investigate whether optimal ridge regression also suffers from this issue.
    Our equivalence results provide a surprising (even to us) indirect way to answer this question affirmatively under proportional asymptotics.

    To analyze the monotonicity of the risk, we will need to impose two additional regularity assumptions to guarantee the convergence of the prediction risk.
    Let \smash{$\bSigma = \sum_{j=1}^p r_j\bw_j\bw_j^{\top}$} denote the eigenvalue decomposition, where $(r_j, \bw_j)$'s are pairs of associated eigenvalue and normalized eigenvector.
    The following assumptions ensure that the hardness of the underlying regression problems across different limiting data aspect ratios is comparable.

    \begin{figure}[!t]
    \centering
    \includegraphics[width=0.7\textwidth]{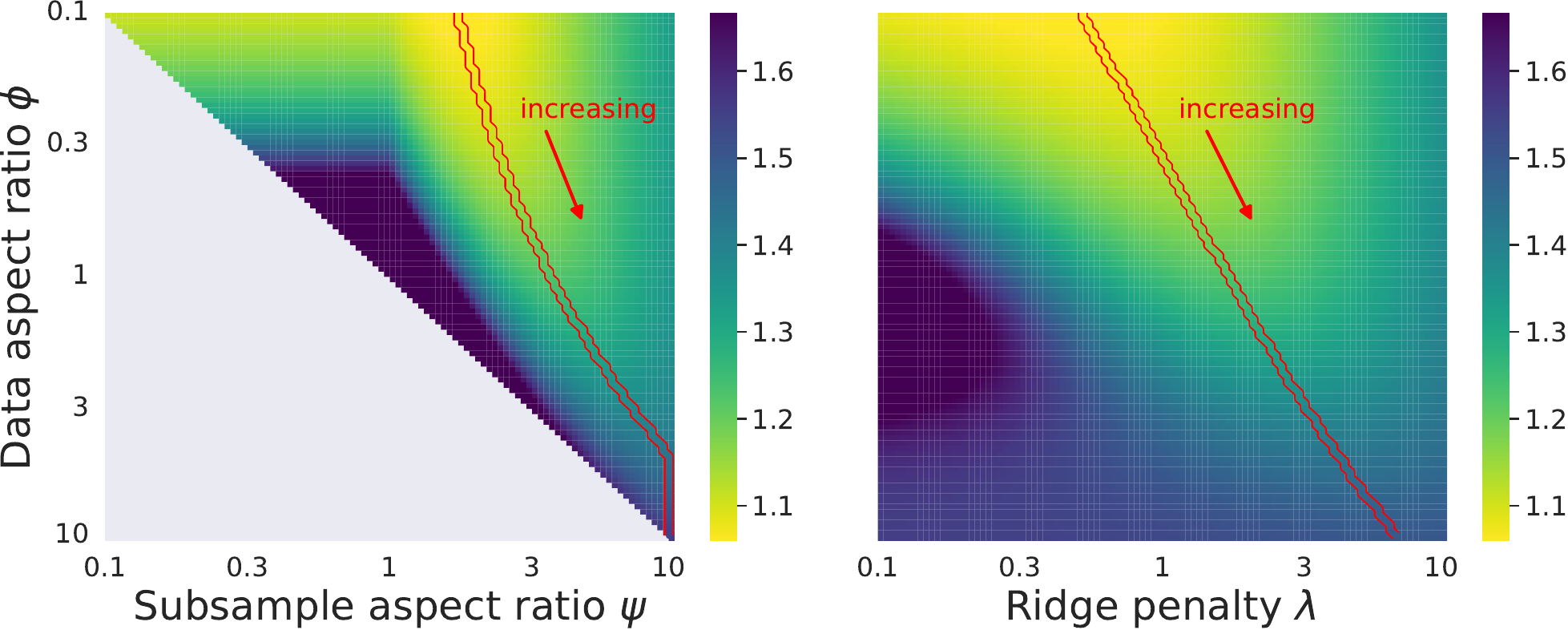}
    \caption{
    Illustration of risk monotonicity of optimal subsampled ridgeless regression versus optimal regularized ridge regression.
    The underlying model has a non-isotropic covariance as described in \Cref{subsec:simu} with a linearized signal-to-noise ratio of $0.6$ and $\|\bfNL\|_{L^2}=1$. 
    This results in a null risk of $1.6$, which is kept the same across all regression problems.
    The left panel shows the limiting risk of the full-ensemble ridgeless regression at various data and subsample aspect ratios $(\phi, \psi)$.
    The right panel shows the limiting risk of the ridge predictor (on the full data) at various data aspect ratios and regularization penalties $(\phi, \lambda)$.
    % We highlight the optimal risks at a given data aspect ratio for both the left and right panels using thin red lines.
    Optimal risks for each data aspect ratio are highlighted using slender red lines in both panels.
    Observe that the optimal risk in both cases is increasing as a function of $\phi$.
    In the left panel, in addition, for every subsample aspect ratio $\psi$, the risk of subsampled ridgeless is increasing in the data aspect ratio $\phi$. This in turn implies the risk monotonicity claim for the optimal ridge regression in \Cref{prop:monotonicty} through a slick triangle inequality argument (see the proof in \Cref{sec:proof:prop:monotonicity} for more details).
    }
    \label{fig:monotonicity-illustration}
    \end{figure}
    
    \begin{assumption}[Spectral convergence]\label{asm:spec-conv}
         We assume there exists a deterministic distribution $H$ such that the empirical spectral distribution of $\bSigma$, \smash{$H_{p}(r) = p^{-1}\sum_{i=1}^{p} \ind_{\{r_i \le r\}}$}, weakly converges to $H$, almost surely (with respect to $\bx$).
    \end{assumption}
    
    \begin{assumption}[Limiting signal and noise energies]\label{ass:energy}
        We assume there exists a deterministic distribution $G$ such that the empirical distribution of $\bbeta_0$'s (squared) projection onto $\bSigma$'s eigenspace, {$G_{p}(r) = \| \bbeta_0 \|_2^{-2} \sum_{i = 1}^{p} (\bbeta_0^\top \bw_i)^2 \, \ind_{\{ r_i \le r \}}$}, weakly converges to $G$. %, almost surely.
        As $n,p\rightarrow\infty$ and $p/n\rightarrow\phi\in(0,\infty)$, the limiting linearized energy
        \smash{$\rho^2=\lim \| \bbeta_0 \|_2^{2}$} and the limiting nonlinearized energy \smash{$\sigma^2 =\lim \|\fNL\|_{L_2}^2$} are~finite.
    \end{assumption}

    \Cref{asm:spec-conv} is commonly used in random matrix theory and overparameterized learning \citep{hastie2022surprises,patil2022bagging,du2023gcv} and ensures that the limiting spectral distribution of the sample covariance matrix converges to a fixed distribution.
    Under these assumptions, we show in~\Cref{sec:proof:prop:monotonicity} that there exists a deterministic function $\sR(\lambda;\phi,\psi)$, such that \smash{$R(\hbeta_{k,\infty}^{\lambda};\bA,\bb, \bbeta_0) \asympequi \sR(\lambda;\phi,\psi)$}.
    Notably, the deterministic profile on the right-hand side is a monotonically increasing and a continuous function in the first parameter $\phi$ when limiting values of $\|\bbeta_0\|_{2}^2$ and $\|\fNL\|_{L_2}^2$ are fixed (i.e., when \Cref{ass:energy} holds).
    Moreover, the deterministic risk equivalent of the optimal ridge predictor matches that of the optimal full-ensemble ridgeless predictor; in other words, \smash{$\min_{\lambda\geq 0} \sR(\lambda;\phi,\phi) =\min_{\psi\geq\phi} \sR(0;\phi,\psi)$}.
    The same monotonicity property of the two optimized functions leads to the following result.

    \begin{theorem}[Monotonicity of prediction risk with optimal ridge regularization]\label{prop:monotonicty}
    Suppose the conditions of \Cref{thm:equiv-risk} and \Crefrange{asm:spec-conv}{ass:energy} hold.
    Let $k,n,p\rightarrow\infty$ such that $p/n\rightarrow\phi\in(0,\infty)$ and $p/k\rightarrow\psi\in[\phi,\infty]$.
    Then, for \smash{$\bA=\bSigma^{1/2}$} and \smash{$\bb=\zero$},
    the optimal risk of the ridgeless ensemble, $\min_{\psi\geq\phi}\sR(0;\phi,\psi)$, is monotonically increasing in $\phi$. 
    Consequently, the optimal risk of the ridge predictor, $\min_{\lambda\geq 0}\sR(\lambda;\phi,\phi)$, is also monotonically increasing in $\phi$.
    \end{theorem}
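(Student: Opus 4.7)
The plan is to first prove monotonicity of the full-ensemble ridgeless optimum $\phi \mapsto \min_{\psi \geq \phi} \sR(0;\phi,\psi)$ and then transfer it to the ridge optimum $\phi \mapsto \min_{\lambda \geq 0} \sR(\lambda;\phi,\phi)$ using \Cref{thm:equiv-risk}. The transfer is automatic because, as $\psi$ ranges over $[\phi,\infty]$, the associated $\bar{\lambda}$ defined by \eqref{eq:lambda_bar} ranges over $[0,\infty]$, and each equivalent pair $(0,\psi)$ and $(\bar{\lambda},\phi)$ produces the same limiting risk for $\bA=\bSigma$, $\bb=\zero$; this yields the identity $\min_{\lambda \geq 0} \sR(\lambda;\phi,\phi) = \min_{\psi \geq \phi} \sR(0;\phi,\psi)$.

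The main task is therefore to derive, under \Crefrange{asm:spec-conv}{ass:energy}, an explicit deterministic formula for $\sR(0;\phi,\psi)$ and show it is nondecreasing in $\phi$ for each fixed $\psi$. I would introduce the subsample-level fixed point $v(\psi)$ from the second equation in \eqref{eq:lambda_bar} together with a companion fixed point that captures inter-subsample overlap, and write the ensemble prediction risk in a bias--variance form
\begin{equation*}
\sR(0;\phi,\psi) \;=\; \rho^2\,\mathsf{B}(\psi;H,G) \;+\; \sigma^2\,\mathsf{V}(\phi,\psi;H) \;+\; \rho^2\,\widetilde{\mathsf{V}}(\phi,\psi;H,G),
\end{equation*}
where the bias $\mathsf{B}$ is inherited from a single subsample and depends only on $\psi$, while the two variance terms carry the $\phi$-dependence through the full-ensemble covariance of two independently subsampled ridgeless predictors (see Lemma S.1.1 of \citep{patil2022bagging} for the finite-$M$ decomposition that underlies this). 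Monotonicity $\partial\sR(0;\phi,\psi)/\partial\phi \geq 0$ would then be verified by differentiating through the coupled fixed-point system, reducing the sign check to nonnegativity of Stieltjes-type integrals against $H$ and $G$ that can be controlled by the Cauchy--Schwarz inequality. The underlying intuition is classical: smaller $\phi$ (more data) produces less overlap between random subsamples of size $p/\psi$, hence smaller inter-subsample covariance and smaller bagged variance.

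Given the pointwise monotonicity in $\phi$, the monotonicity of the minimum reduces to a short envelope argument. For $\phi_1 < \phi_2$, let $\psi_2^\star$ attain $\min_{\psi\geq\phi_2}\sR(0;\phi_2,\psi)$; since $\psi_2^\star\geq\phi_2>\phi_1$, this aspect ratio is feasible at $\phi_1$ as well, so
\begin{equation*}
\min_{\psi\geq\phi_1}\sR(0;\phi_1,\psi) \;\leq\; \sR(0;\phi_1,\psi_2^\star) \;\leq\; \sR(0;\phi_2,\psi_2^\star) \;=\; \min_{\psi\geq\phi_2}\sR(0;\phi_2,\psi),
\end{equation*}
and the transfer identity of the first paragraph then completes the proof for the ridge optimum.

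The hard part will be the pointwise check $\partial\sR(0;\phi,\psi)/\partial\phi \geq 0$: the fixed-point variable $v$ itself depends on $\phi$ through the overlap-companion equation, so $\phi$ enters $\sR$ both directly (through inter-subsample overlap $\phi/\psi$) and indirectly (through $v$), and both contributions must be tracked. \Cref{ass:energy} is essential at this step: by fixing the linearized energies $\rho^2$ and $\sigma^2$ and the limit spectra $H$ and $G$ across problems indexed by $\phi$, it ensures the comparison is between regression problems of comparable hardness and rules out pathological $\phi$-dependence coming in through rescaling of $\bbeta_0$ or $\fNL$, which is precisely the regularity condition the paper highlights as necessary for settling \citet[Conjecture~1]{nakkiran2021optimal}.
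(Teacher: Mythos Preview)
Your high-level plan matches the paper exactly: prove pointwise monotonicity of $\phi\mapsto\sR(0;\phi,\psi)$ for each fixed $\psi$, apply the two-step envelope inequality you wrote down, and transfer to $\min_{\lambda\ge 0}\sR(\lambda;\phi,\phi)$ via the bijection between $\psi\in[\phi,\infty]$ and $\bar\lambda\in[0,\infty]$ furnished by \eqref{eq:lambda_bar} and \Cref{thm:equiv-risk}. That part is fine.

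Where you diverge from the paper is in the ``hard part''. You introduce a companion fixed point capturing inter-subsample overlap and then worry that ``the fixed-point variable $v$ itself depends on $\phi$'', so that you must differentiate through a coupled system. In the paper's derivation this complication never arises: under \Crefrange{asm:spec-conv}{ass:energy} the limiting risk with $\bA=\bSigma$, $\bb=\zero$ is
\[
\sR(0;\phi,\psi)=\rho^{2}(1+\tv)\int\frac{r}{(1+v r)^{2}}\,\rd G(r)+\sigma^{2}\,\tv,
\qquad
\tv=\frac{\phi\int r^{2}(1+vr)^{-2}\,\rd H(r)}{v^{-2}-\phi\int r^{2}(1+vr)^{-2}\,\rd H(r)},
\]
and the only fixed point is $v=v(0;\psi)$, determined by $v^{-1}=\psi\int r(1+vr)^{-1}\,\rd H(r)$, which depends on $\psi$ alone. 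Hence for fixed $\psi$ the sole $\phi$-dependence is the explicit factor in $\tv$, which has the form $\phi a/(b-\phi a)$ with $a,b>0$ and $b>\phi a$; monotonicity in $\phi$ is immediate, and no implicit differentiation or Cauchy--Schwarz argument is needed. Your proposed decomposition with a separate overlap-companion fixed point is not wrong in spirit, but it obscures the key structural fact that makes the proof short: the full-ensemble limit collapses the subsample dependence into a single $\psi$-only fixed point, leaving $\phi$ to enter only linearly in the variance amplification.
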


    Under \Cref{asm:spec-conv,ass:energy}, the linearized signal-to-noise ratio (SNR) is maintained at the same level across varying distributions as $\phi$ changes.
    The key message of \Cref{prop:monotonicty} is then that, for a sequence of problems with the same SNR (indicating the same level of regression hardness), the asymptotic prediction risk of optimized ridge risk gets monotonically worse as the aspect ratio of the problem increases.
    This is intuitive because a smaller $\phi$ corresponds to more samples than features. In this sense, \Cref{prop:monotonicty} certifies that optimal ridge uses the available data effectively, avoiding sample-wise non-monotonicity.
    Moreover, as the null risk is finite, this also shows that the optimal ridge estimator mitigates the ``double or multiple descents'' behaviors in the generalization error \cite{belkin2019reconciling,nakkiran2021optimal,patil2022mitigating}.
       
    Attempting to prove \Cref{prop:monotonicty} directly is challenging due to the lack of a closed-form expression for the optimal risk of ridge regression in general.
    Moreover, the risk profile of ridge regression, $\sR(\lambda;\phi,\phi)$, does not exhibit any particular structure as a function of $\lambda$.
    On the other hand, the risk profile of the full-ensemble ridgeless, $\sR(0;\phi,\psi)$, has a very nice structure in terms of $\psi$.
    This, coupled with our equivalence result, allows one to prove quite non-trivial behavior of optimal ridge.
    See \Cref{fig:monotonicity-illustration} for a visual illustration.

\section{Discussion}
\label{sec:extensions}

\begin{figure}[!t]
    \centering
    \includegraphics[width=0.9\textwidth]{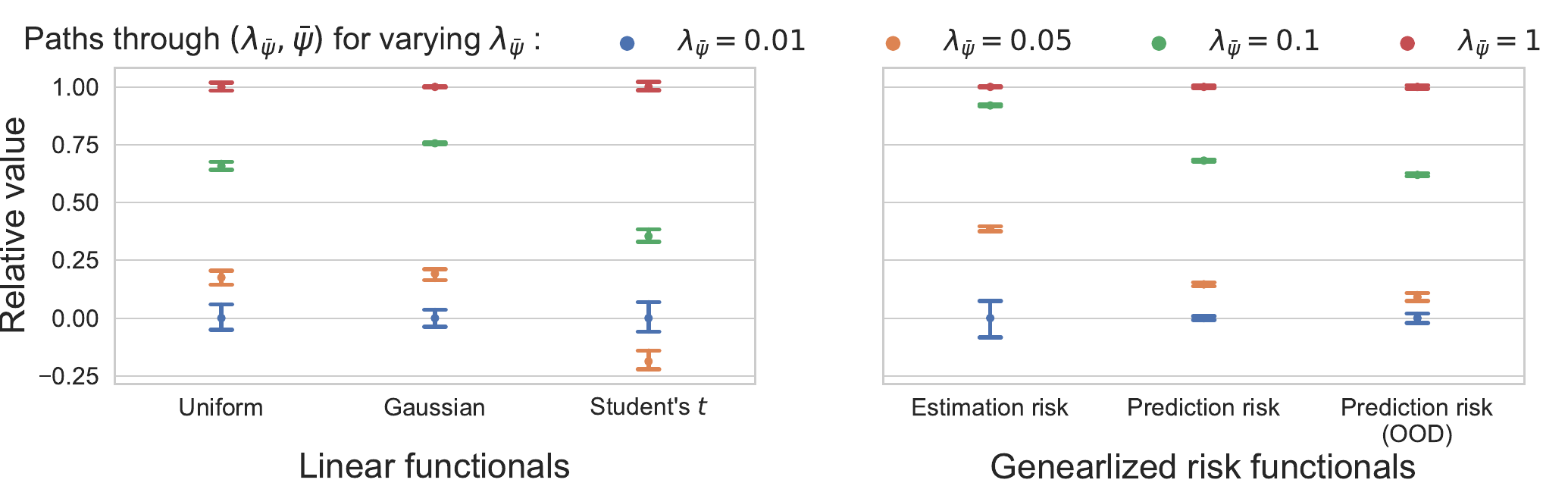}
    \caption{Comparison of various linear functionals (\Cref{thm:equiv-estimator-linear}) and generalized risk functionals (\Cref{thm:equiv-risk}) on different paths evaluated on CIFAR-10. 
    The aspect ratios \smash{$\phi=p/n$} and \smash{$\bar{\psi}=4\phi$} are estimated from the dataset and fixed.
    For different values of \smash{$\lambda_{\bar{\psi}}$} (different colors), we estimate \smash{$\bar{\lambda}$} from \Cref{prop:data-path}, which gives a path between \smash{$(\lambda_{\bar{\psi}}, \bar{\psi})$} and \smash{$(\bar{\lambda},\phi)$}.
    For each path, we uniformly sample 5 points and compute the functionals of the ridge ensemble using $M=100$.
    The values of different paths are then normalized by subtracting the mean value in the first path and dividing by the mean difference of values on the last and first paths.
    }
    \label{fig:cifar10}
\end{figure}

Motivated by the recent subsampling and ridge equivalences for the prediction risk \cite{lejeune2020implicit,patil2022bagging,du2023gcv}, this paper establishes generalized risk equivalences (\Cref{sec:generalized-risk}) and structural equivalences (\Cref{sec:structural-equivalence}) within the family of ensemble ridge estimators.
Our results precisely link the implicit regularization of subsampling to explicit ridge penalization via a path $\cP$ defined in \eqref{eq:path}, which connects two endpoints $(0,\bar{\psi})$ and $(\bar{\lambda},\phi)$ regardless of the risk functionals.
Furthermore, we provide a data-dependent method (\Cref{prop:data-path}) to estimate this path.
Our results do not assume any specific relationship between the response variable $y$ and the feature vector $\bx$.
We next explore some extensions of these equivalences.

\paragraph{Real data.}
While our theoretical results assume RMT features (\Cref{asm:feat}), we anticipate that the equivalences will very likely hold under more general features \cite{cheng2022dimension}.
To verify this, we examine the equivalences in \Cref{prop:data-path} with $M=100$ on real-world datasets.
\Cref{fig:cifar10} depicts both the linear functionals in \Cref{thm:equiv-estimator-linear} and the generalized quadratic functionals in \Cref{thm:equiv-risk} computed along four paths, shown in different colors.
We observe that the variation within each path is small, while different paths have different functional values.
This suggests that the theoretical finding in the previous sections also hold quite well on real-world datasets.
We investigate this further by varying $\bar{\psi}$ on the three image datasets, CIFAR-10, MNIST, and USPS.
\Cref{fig:realdata} shows similar values and trends at the two points.
These experiments demonstrate that the amount of explicit regularization induced by subsampling can indeed be accurately predicted based on the observed data using \Cref{prop:data-path}.

\paragraph{Random features.}
Closely related to two-layer neural networks \cite{mei_montanari_2022}, we can consider the random feature model, \smash{$f(\bx;\bbeta,\bF) = \bbeta^{\top} \varphi(\bF\bx)$}, where $\bF\in\RR^{d\times p}$ is some randomly initialized weight matrix, and $\varphi:\RR\rightarrow\RR$ is a nonlinear activation function applied element-wise to $\bF\bx$.
As from ``universality/invariance'', certain random feature models are asymptotically equivalent to a surrogate linear Gaussian model with a matching covariance matrix \citep{hu2023universality}, we expect the theoretical results in \Crefrange{thm:equiv-estimator-linear}{thm:equiv-risk} to likely hold, although the relationship \eqref{eq:lambda_bar} will now depend on the non-linear activation functions.
Empirically, we indeed observe similar equivalence phenomena of the prediction risks with random features ridge regression using sigmoid, ReLU, and tanh activation functions, as shown in \Cref{fig:rf}.
Analogous to \Cref{prop:data-path}, we conjecture a similar data-driven relationship between \smash{$(\overline{\lambda}_n, \phi_n)$} and \smash{$(0, \bar{\psi}_n)$} for random features \smash{$\tilde{\bX}=\varphi(\bX \bF^\top)$}:
    \begin{conjecture}[Data-dependent equivalences for random features (informal)]
        \label{prop:data-line-conjecture-random}
        Suppose \Crefrange{asm:model}{asm:feat} hold.
        Define $\phi_n = p / n$.
        Let $k \le n$ be the subsample size and denote by $\bar{\psi}_n = p / k$.
        Suppose $\varphi$ satisfies certain regularity conditions.
        For any $M \in \NN \cup \{ \infty \}$, let $\bar{\lambda}_n$ be the value that satisfies
        \[
            \frac{1}{M}\sum_{\ell=1}^{M} \frac{1}{k}\tr\left[\left(
            \frac{1}{k} \varphi(\bL_{I_{\ell}}\bX \bF^\top) \varphi(\bL_{I_{\ell}}\bX \bF^\top)^\top
             \right)^+\right]
            =  \frac{1}{n}\tr\left[\left( \frac{1}{n} \varphi(\bX \bF^\top) \varphi(\bX \bF^\top)^\top + \bar{\lambda}_n \bI_n\right)^{-1}\right].
        \]
        Define the data-dependent path $\cP_n = \cP(\bar{\lambda}_n; \phi_n, \bar{\psi}_n)$.
        Then, the conclusions in \Cref{thm:equiv-risk}, \Cref{prop:equiv-risk-X}, and \Cref{thm:equiv-estimator-linear} continue to hold on $(\varphi(\bX \bF^\top),\by)$ with $\cP$ replaced by $\cP_n$.
    \end{conjecture}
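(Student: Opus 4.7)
The plan is to lift the linear-model conclusions of \Cref{thm:equiv-estimator-linear} and \Cref{prop:data-path} to random features via a \emph{Gaussian equivalence principle} (GEP) for $\varphi$. Under suitable regularity on $\varphi$ (centering, bounded and Lipschitz derivatives, or the Hermite-moment conditions of \cite{hu2023universality}), the random-feature matrix \smash{$\tilde{\bX} := \varphi(\bX \bF^\top)$} is expected to be asymptotically equivalent, for the purpose of resolvent trace functionals, to a linear Gaussian surrogate
\[
\tilde{\bX}^G \;=\; \mu_1\, \bX \bF^\top \;+\; \mu_\star\, \bZ,
\]
where $\mu_1,\mu_\star$ are the Hermite coefficients of $\varphi$ at the appropriate standardization and $\bZ$ has i.i.d.\ centered entries independent of $\bX$. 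The rows of $\tilde{\bX}^G$ then satisfy an analogue of \Cref{asm:feat} with a deterministic surrogate covariance \smash{$\tilde{\bSigma}$} built from $\bSigma$, $\bF$, and $(\mu_1,\mu_\star)$.

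First I would promote the scalar GEP to a resolvent-level equivalence
\[
\Bigl(\tfrac{1}{k}\tilde{\bX}^\top \bL_I \tilde{\bX} + \lambda \bI_p\Bigr)^{-1} \;\asympequi\; \Bigl(\tfrac{1}{k}(\tilde{\bX}^G)^\top \bL_I \tilde{\bX}^G + \lambda \bI_p\Bigr)^{-1}
\]
for each subsample indicator $\bL_I$, uniformly in $\lambda \ge 0$. Tested against arbitrary bounded-trace-norm matrices, this brings the random-feature ensemble estimators into the framework handled by \Cref{thm:equiv-estimator-linear} with effective features $\tilde{\bX}^G$ and spectrum $\tilde{H}_p$ of $\tilde{\bSigma}$; the structural equivalence then transfers along the linear path $\cP(\bar{\lambda}; \phi, \bar{\psi})$ associated with $\tilde{H}_p$ via \eqref{eq:lambda_bar}. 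The trace identity in the conjecture is itself a bounded linear functional of two such resolvents (the subsample-averaged pseudo-inverse and the full ridge resolvent), so another application of the resolvent-level GEP shows that solving it with $\tilde{\bX}$ in place of $\tilde{\bX}^G$ yields the same $\bar{\lambda}_n$ up to $o(1)$; monotonicity of the right-hand side in $\bar{\lambda}_n$ then uniquely identifies the implicit solution, and the risk-level equivalences of \Cref{thm:equiv-risk} and \Cref{prop:equiv-risk-X} follow by composing the structural equivalence with the risk functionals.

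The main obstacle is establishing the GEP at three levels simultaneously: (i) as a \emph{resolvent} equivalence tested against arbitrary bounded-trace-norm matrices, rather than just for the specific scalar risks treated in prior universality results, (ii) \emph{uniformly} in the subsample projector $\bL_{I_\ell}$ over all $\ell \in [M]$ appearing in the ensemble average, and (iii) down to $\lambda = 0$, where vanishing Gram-matrix eigenvalues when $\bar{\psi}_n > 1$ force control of the Moore--Penrose inverse rather than a well-conditioned ridge resolvent. Existing GEP-type results \cite{hu2023universality,mei_montanari_2022} are typically formulated for isotropic proportional scalings at fixed positive $\lambda$, so a Lindeberg-swapping argument combined with an anisotropic local law for the subsampled random-feature Gram matrix, supplemented by a pseudo-inverse stability argument at the ridgeless endpoint, appears to be the crux. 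Once this resolvent-level GEP is in hand, existence and uniqueness of $\bar{\lambda}_n$ via monotonicity of the trace identity and the transfer from structural to generalized-risk equivalence follow from the already-proven theorems applied to the surrogate model.
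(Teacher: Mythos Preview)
The statement you are attempting to prove is explicitly a \emph{conjecture} in the paper, not a theorem. The paper offers no proof whatsoever: it is presented in the Discussion section as an informal conjecture motivated purely by the empirical evidence in \Cref{fig:rf}, with the authors writing that they ``conjecture a similar data-driven relationship'' and that ``the empirical evidence here strongly suggests that the relationship between subsampling and ridge regression \ldots\ holds true at least when the gram matrix `linearizes' in the sense of asymptotic equivalence.'' There is therefore nothing in the paper to compare your proposal against.

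That said, your outline is a sensible research plan, and you have correctly identified the real difficulties. The Gaussian equivalence route is exactly what the paper's own remarks gesture at (citing \cite{hu2023universality} and the gram-matrix linearization of \cite{liang2020just,bartlett_montanari_rakhlin_2021}). Your three obstacles---(i) lifting GEP from scalar risks to arbitrary trace functionals of the resolvent, (ii) uniformity over the random subsample projectors $\bL_{I_\ell}$, and (iii) extension to the ridgeless endpoint $\lambda=0$---are genuine open problems, not technicalities, and the paper does not claim otherwise. In particular, point (ii) is subtle because the paper's proofs for the linear case (e.g., \Cref{lem:det-equiv-full}) condition on specific subsampled blocks and then average, which would require the GEP to hold conditionally on $\bL_{I_1 \cap I_2}\bX$; this interacts nontrivially with the Lindeberg swapping you propose. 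Your proposal is an honest statement of what a proof would require, but it is not a proof, and the paper does not supply one either.
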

    
\begin{figure}[!t]
    \centering
    \includegraphics[width=0.9\textwidth]{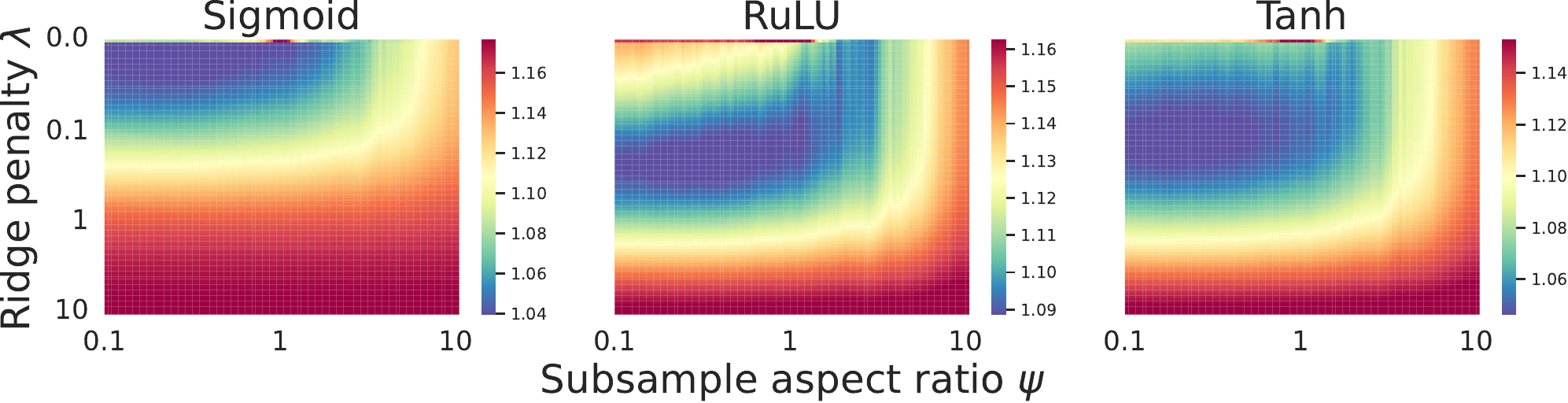}
    \caption{Heat map of prediction risks of full-ensemble ridge estimators (approximated with \emph{$M=100$}) using random features, for varying ridge penalties $\lambda$, subsample aspect ratios $\psi= p/d$ on the log-log scale.
    We consider random features $\varphi(\bF \bx_i)\in\RR^d$, where $\varphi$ is an activation function (sigmoid, ReLU, or tanh). 
    The data model is described in \Cref{subsec:rf} with
    $p = 250$, $d = 500$, $n = 5000$, and \smash{$\phi = d / n = 0.1$}.
    As in \Cref{thm:equiv-risk}, we see clear risk equivalence paths across activations.} \label{fig:rf}
\end{figure}

\paragraph{Kernel features.}

We can also consider kernel ridge regression.
For a given feature map $\Phi:\RR^{p}\rightarrow\RR^{d}$, the kernel ridge estimator (in the primal form) is defined as:
\begin{align*}
    \hbeta^{\lambda}_{k}(\cD_I) 
        &= \argmin\limits_{\bbeta\in\RR^p}
        \sum_{i\in I}(k^{-1/2}y_i  - k^{-1/2}\Phi(\bx_i)^{\top}\bbeta)^2
        + \lambda \| \bbeta\|_2^2
        \sum_{i\in I}(y_i  - \Phi(\bx_i)^{\top}\bbeta)^2
        + \frac{k}{p}\lambda \| \bbeta\|_2^2. 
\end{align*}
Leveraging the kernel trick, the preceding optimization problem translates to solving the following problem (in the dual domain):
\begin{align*}
    \halpha^{\lambda}_{k}(\cD_I) 
    &= \argmin\limits_{\balpha\in\RR^k}
    \balpha^{\top} \left(\bK_I + k \lambda \bI_k\right) \balpha + \balpha^{\top}\by_I ,
\end{align*}
where $\bK_I=\bPhi_I\bPhi_I^{\top}\in\RR^{k\times k}$ is the kernel matrix and $\bPhi_I = (\Phi(\bx_i))_{i\in I} \in \RR^{n\times d}$ is the feature matrix.
The correspondence between the dual and primal solutions is simply given by: $\hbeta^{\lambda}_{k}(\cD_I) = \bPhi_I^{\top}\halpha^{\lambda}_{k}(\cD_I) $.

\Cref{fig:kernel} illustrate results for kernel ridge regression using the same data-generating process as in the previous subsection.
The figure shows the prediction risk of kernel ridge ensembles for polynomial, Gaussian, and Laplace kernels exhibit similar equivalence patterns, leading us to formulate an analogous conjecture for kernel ridge regression (which when $\Phi(\bx) = \bx$ gives back \Cref{prop:data-path} with appropriate rescaling of features):
\begin{conjecture}[Data-dependent equivalences for kernel features (informal)]
    \label{prop:data-line-conjecture-kernel}
    Suppose \Crefrange{asm:model}{asm:feat} hold.
    Define $\phi_n = p / n$.
    Suppose the kernel $K$ satisfies certain regularity conditions.
    Let $k \le n$ be the subsample size and denote by $\bar{\psi}_n = p / k$.
    For any $M \in \NN \cup \{ \infty \}$, let $\bar{\lambda}_n$ be the value that satisfies
    \[
        \frac{1}{M}\sum_{\ell=1}^{M} \tr\left[
        \bK_{I_{\ell}} ^+\right]
        =  \tr\left[\left(\bK_{[n]}+ \frac{n}{p}\bar{\lambda}_n \bI_n\right)^{-1}\right].
    \]
    Define the data-dependent path $\cP_n = \cP(\bar{\lambda}_n; \phi_n, \bar{\psi}_n)$.
    Then, the conclusions in \Cref{thm:equiv-risk}, \Cref{prop:equiv-risk-X}, and \Cref{thm:equiv-estimator-linear} continue to hold for kernel ridge ensembles with $\cP$ replaced by $\cP_n$.
\end{conjecture}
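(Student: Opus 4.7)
The plan is to reduce the kernel case to the linear ridge case of \Cref{prop:data-path} by exploiting the primal--dual correspondence for kernel ridge regression stated in \Cref{sec:extensions}. Each subsampled dual solution $\halpha^{\lambda}_{k}(\cD_I)$ corresponds to the primal estimator $\hbeta^{\lambda}_{k}(\cD_I) = \bPhi_I^\top \halpha^{\lambda}_{k}(\cD_I)$, which itself minimizes $k^{-1}\sum_{i\in I}(y_i - \Phi(\bx_i)^\top \bbeta)^2 + (\lambda/p)\|\bbeta\|_2^2$. Thus the ensemble kernel ridge estimator is a linear ridge ensemble on the transformed feature matrix $\bPhi\in\RR^{n\times d}$ with normalized ridge penalty $\lambda/p$ replacing $\lambda$ in the form of \eqref{eq:ingredient-estimator}.

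First, I would apply \Cref{prop:data-path} to the transformed problem (with $\bX$ replaced by $\bPhi$ and ridge penalty $\lambda/p$). The resulting linearized path parameter $\bar{\lambda}'_n$ satisfies $M^{-1}\sum_\ell k^{-1}\tr[(k^{-1}\bPhi_{I_\ell}\bPhi_{I_\ell}^\top)^+] = n^{-1}\tr[(n^{-1}\bPhi\bPhi^\top + \bar{\lambda}'_n \bI_n)^{-1}]$. Using the identities $\tr[(c\bM)^+] = c^{-1}\tr[\bM^+]$ and $\tr[(n^{-1}\bM + c\bI_n)^{-1}] = n\tr[(\bM + nc\bI_n)^{-1}]$, together with the rescaling $\bar{\lambda}_n = p\bar{\lambda}'_n$, this equation rewrites exactly as the defining equation in the conjecture in terms of the kernel matrices $\bK_{I_\ell} = \bPhi_{I_\ell}\bPhi_{I_\ell}^\top$ and $\bK_{[n]}$. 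Second, I would transport the estimator and risk equivalences from \Cref{thm:equiv-estimator-linear,thm:equiv-risk} and \Cref{prop:equiv-risk-X} back to the kernel setting via the linear map $\halpha\mapsto \bPhi^\top\halpha$: linear functionals $\ba^\top\hbeta$ become $(\bPhi^\top\ba)^\top\halpha$, and the generalized risks transform under bounded linear maps preserving the trace-equivalence framework of \Cref{sec:preliminaries}.

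The main obstacle is that $\Phi(\bx_i)$ typically does not satisfy \Cref{asm:feat}: polynomial kernels produce nonlinearly correlated feature entries, while Gaussian and Laplace kernels induce infinite-dimensional feature maps. To surmount this, one would invoke \emph{kernel matrix linearization} results (e.g., El Karoui~2010, Cheng--Singer, Liao--Couillet), which show that under appropriate regularity on the kernel profile and data (smooth profile, sub-Gaussian $\bx$ with $\|\bx\|^2\asymp p$), the matrix $\bK$ is asymptotically equivalent in operator norm to a Gaussian equivalent surrogate of the form $c_1 \bX\bX^\top/p + c_2 \bI_n + c_3 \mathbf{1}\mathbf{1}^\top$, whose Gram structure does satisfy RMT-type assumptions and naturally uses the original dimension $p$ rather than $d$. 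This explains why the endpoint coordinates in the conjectured path are $(0,\bar{\psi}_n)=(0,p/k)$ and $(\bar{\lambda}_n,\phi_n)=(\bar{\lambda}_n,p/n)$ rather than the nominal $d/k,d/n$. Replacing $\bK$ by this surrogate throughout would let the linear framework apply; the ``regularity conditions'' alluded to in the conjecture are precisely those needed to make the linearization quantitative in trace-equivalence norm. The remaining delicate step is extending the concentration tools of \cite{bartlett_montanari_rakhlin_2021} used in this paper so that trace equivalences survive the residual between $\bK$ and its Gaussian surrogate uniformly over the path, particularly for heavy-tailed data or only weakly smooth kernel profiles.
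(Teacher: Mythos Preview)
The statement you are trying to prove is a \emph{conjecture} in the paper, not a theorem. The paper offers no proof; it presents only empirical evidence (\Cref{fig:kernel}) and remarks that the relationship ``holds true at least when the gram matrix `linearizes' in the sense of asymptotic equivalence,'' explicitly leaving a rigorous treatment for future work. There is therefore no paper proof to compare your proposal against.

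That said, your sketch is consistent with the heuristic the paper gestures at, and you correctly identify the essential obstruction: the kernel feature map $\Phi(\bx)$ does not satisfy \Cref{asm:feat}, so \Cref{prop:data-path} cannot be invoked directly on $\bPhi$. Your reduction via the primal--dual correspondence and the trace-identity rescaling is the natural first move, and your appeal to kernel linearization (El Karoui-type results replacing $\bK$ by a Gaussian-equivalent surrogate $c_1\bX\bX^\top/p + c_2\bI_n + c_3\mathbf{1}\mathbf{1}^\top$) is exactly the mechanism the paper alludes to. However, you should be aware that this route, even if carried out, would only cover kernels admitting such a linearization in the proportional regime; the conjecture as stated is broader, and the ``certain regularity conditions'' are deliberately left unspecified. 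The step you flag as ``delicate''---propagating operator-norm linearization through the resolvent and quadratic-form concentration arguments of \Cref{sec:proof:lemma} uniformly along the path, including at the ridgeless endpoint where the resolvent is a pseudoinverse---is not a technicality but the entire content of the open problem. In short: your plan is a reasonable research outline, not a proof, and the paper agrees with you that it is not yet one.
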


The empirical evidence here strongly suggests that the relationship between subsampling and ridge regression, which we have proved for ridge regression, holds true at least when the gram matrix ``linearizes'' in the sense of asymptotic equivalence \citep{liang2020just,bartlett_montanari_rakhlin_2021,sahraee2022kernel}.
This intriguing observation opens up a whole host of avenues towards fully understanding the universality of this relationship and establishing precise connections for a broader range of models \cite{hansen1990neural,perrone1993putting,krogh1994neural,krogh1995learning}.
We hope to share more on these compelling directions in the near future.

\begin{figure}[!ht]
    \centering\includegraphics[width=0.9\textwidth]{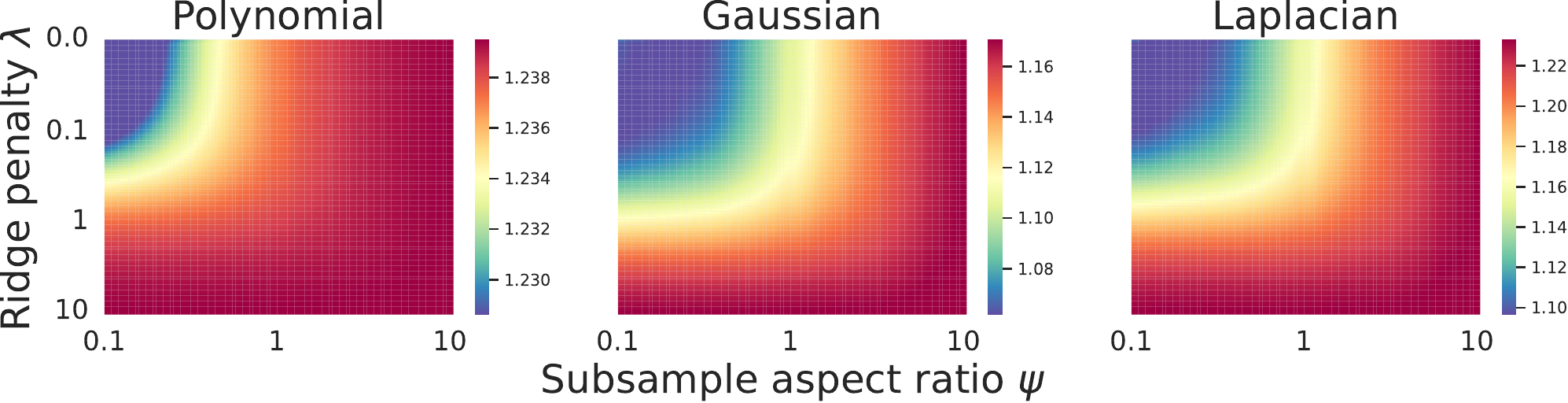}
    \caption{Heat map of the empirical distribution prediction risk of full-ensemble kernel ridge estimators, for varying ridge penalties $\lambda$ and subsample aspect ratio $\psi= p/d$ on the log-log scale.
    The prediction risk of the ridge ensemble ($M=100$) is computed using polynomial, Gaussian, and Laplacian kernel, using the default parameters as in Python package \texttt{scikit-learn} v1.2.2 \citep{scikit-learn} without the intercept terms.
    We scale $\lambda$ by $k/p$ so that we can obtain non-null estimators for the polynomial inner-product kernel (of degree 3).}
    \label{fig:kernel}
\end{figure}

\paragraph{Tying other implicit regularizations.}
Finally, as noted in related work, the equivalences between ridge regularization and subsampling established in this paper naturally offer opportunities to interpret and understand the other implicit regularization effects such as dropout regularization \cite{wager2013dropout,srivastava2014dropout}, early stopping in gradient descent variants \cite{ali2019continuous,neu2018iterate,ali2020implicit}.
Furthermore, these equivalences provide avenues to understand the combined effects of these forms of implicit regularization, such as the effects of both subsample aggregating and gradient descent in mini-batch gradient descent, and contrast them to explicit regularization. 
Whether the explicit regularization can always be cleanly expressed as ridge-like regularization or takes a more generic form is an intriguing question. Exploring this is exciting, and we hope our readers also share this excitement!

\section*{Acknowledgements}

We are indebted to
Ryan Tibshirani,
Alessandro Rinaldo,
Arun Kuchibhotla,
Yuting Wei,
Matey Neykov,
Edgar Dobriban,
Daniel LeJeune,
Shamindra Shrotriya,
Yuchen Wu 
for many insightful conversations.
We are also grateful to anonymous reviewers of the precursor \cite{du2023gcv} for encouraging us to work on several follow-up directions that make up parts of this successor.
On a more personal note: The lack of proof of risk monotonicity of optimally-tuned ridge in general had been {``monotone bugging''} (in a good way) the first author, ever since plotting Figure 8 (and other similar plots) in \cite{patil2022mitigating}. 
The resolution in \Cref{prop:monotonicty} is of great personal significance to him and he is heartily appreciative of all the collaborators on related threads, especially the second author, for being a dedicated climbing partner on this technical ``ascent''. 
At the risk of over analogizing, the metaphor of letting a nut soak in water instead of forcibly cracking it open with a hammer, from Grothendieck\footnote{ \url{http://math.stanford.edu/~vakil/216blog/FOAGmay2523public.pdf}, page 9.}, may perhaps describe the proof of \Cref{prop:monotonicty}. Establishing subsampling and ridge equivalences first and using it to demonstrate ridge monotonicity, in retrospect, seems a much less arduous route than a direct brute-force attempt (with calculus, say). But we will let the readers decide!

This work used the Bridges2 system at the Pittsburgh Supercomputing Center (PSC), for which we thank the computing support of allocation MTH230020 from the Advanced Cyberinfrastructure Coordination Ecosystem: Services \& Support (ACCESS) program.
% The code for reproducing the results of this paper can be found at \url{https://github.com/jaydu1/Overparameterized-Bagging}.

% \clearpage
\bibliographystyle{unsrtnat}
\bibliography{main.bib}

\appendix
\counterwithin{theorem}{section}
\renewcommand{\thefigure}{\thesection\arabic{figure}}
\clearpage

\begin{center}
\Large
{\bf
Supplementary material for \\``\titletext''
}
\end{center}

This document serves as a supplement to the paper ``\titleRLB.''
The initial (unnumbered) section of the supplement provides a summary of the notation used in both the paper and supplement, followed by an organization for the supplement.

\subsection*{Notation}

Below we provide an overview of the notation used throughout.

\subsubsection*{General notation}

\begin{table}[ht]
\centering
\begin{tabularx}{\textwidth}{L{2.57cm}L{13.4cm}}
\toprule
\textbf{Notation} & \textbf{Description} \\
\midrule
Non-bold & Denotes univariate objects such as scalars, functions, distributions, etc. (e.g., $\lambda$, $f$, $H$). \\
Bold lower case & Denotes vectors (e.g., $\bx$, $\bbeta$). \\
Bold upper case & Denotes matrices (e.g., $\bX$). \\
Calligraphic font & Denotes sets (e.g., $\cD$). \\
Script font & Denotes certain limiting functions that are rather intricate to stare at and often involve certain fixed-point parameters, with the font indicating this visually (e.g., $\sR$ in \eqref{eq:prop:monotonicty-eq-1}). \\
\arrayrulecolor{black!25}\midrule
$\NN$ & Set of positive integers. \\
% $\RR$, $\RR_+$ & Set of real numbers and positive real numbers. \\
$\RR$ & Set of real numbers. \\
$\RR_+$ & Set of positive real numbers. \\
% $\CC$, $\CC^{+}$ & Set of complex numbers and complex numbers with positive imaginary part. \\
$\CC$ & Set of complex numbers. \\
$\CC^{+}$ & Set of complex numbers with positive imaginary parts. \\
% $\CC^{-}$ & Set of complex numbers with negative imaginary part \\
$[n]$ & Set $\{1, \dots, n\}$ for natural number $n$. \\
\midrule
$(x)_{+}$ & The positive part of real number $x$. \\
$\lceil x \rceil$ & The ceiling of real number $x$. \\
$\lfloor x \rfloor$ & The floor of real number $x$. \\
$\|f\|_{L_2}$ & The $L_2$ norm of function $f$, $\|f\|_{L_2}=\EE_{\bx}[f^2(\bx)]$. \\
\midrule
$\| \bbeta \|_{2}$ & The $\ell_2$ norm of vector $\bbeta$. \\
$\langle \bv, \bw \rangle$ & The inner product of vectors $\bv$ and $\bw$. \\
% $\bX^\top$, $\bX^{+}$ & The transpose and Moore-Penrose inverse of matrix $\bX \in \RR^{n \times p}$. \\
$\tr[\bA]$ & The trace of square matrix $\bA \in \RR^{p \times p}$. \\
$\bA^{-1}$ & The inverse (if invertible) of square matrix $\bA \in \RR^{p \times p}$. \\
$\bSigma^{1/2}$ & The principal square root of positive semidefinite matrix $\bSigma$. \\
$\bI_p$ or $\bI$ & The $p \times p$ identity matrix. \\
$\| \bX \|_{\mathrm{op}}$ & The operator norm (spectral norm) of real matrix $\bX$. \\
$\| \bX \|_{\mathrm{tr}}$ & The trace norm (nuclear norm) $\tr[(\bX^\top \bX)^{1/2}]$ of real matrix $\bX$. \\
$f(\bA)$ & The matrix $\bV f(\bR) \bV^{-1}$ where $\bA = \bV \bR \bV^{-1}$ is eigenvalue decomposition, and $f(\bR)$ is $f$~applied (elementwise) to the diagonals of $\bR$. \\
$\bA \preceq \bB$ & The Loewner ordering for symmetric matrices $\bA$ and $\bB$.\\
\midrule
$\ind_A$ & Indicator random variable associated with event $A$. \\
$\cO_p$ & Probabilistic big-O notation. \\
$o_p$ & Probabilistic little-o notation. \\
$\asto$ & Almost sure convergence. \\
$\pto$ & Convergence in probability. \\
$\dto$ & Weak convergence. \\
\arrayrulecolor{black}\bottomrule
\end{tabularx}
\end{table}

\normalsize
\clearpage

\subsubsection*{Specific notation}

\begin{table}[!ht]
\centering
\begin{tabularx}{\textwidth}{L{2.5cm}L{13.8cm}}
\toprule
\textbf{Notation} & \textbf{Description} \\
\midrule
$(\bx,y)$ & The population random vector in $\RR^{p} \times \RR$.\\
$\cD_n$  &  A dataset containing i.i.d.\ samples of $(\bx,y)$: $\mathcal{D}_n = \{(\bx_i, y_i) : i \in [n] \}$. \\
$\bX,\by$ & The feature matrix in $\RR^{n\times p}$ and the response vector in $\RR^n$\\
$\cD_I$  & A subsampled dataset $\mathcal{D}_I = \{(\bx_i, y_i) : i \in I \}$.\\ 
$\hbeta_{k}^{\lambda}(\cD_I)$ & A ridge estimator fitted on $\cD_I$ with $|I|=k$ observations and ridge penalty $\lambda$.\\
$\hbeta_{k,M}^{\lambda}(\cD_n; \{ I_\ell \}_{\ell=1}^{M})$ & An $M$-ensemble ridge estimator fitted on $\{\cD_{I_\ell}\}_{\ell=1}^{M}$ with subsample size $k$ and ridge penalty $\lambda$. \\
$\bbeta_0$ & The best (population) linear projection coefficient of $y$ onto $\bx$: $\EE[\bx\bx^{\top}]^{-1} \EE[\bx y]$.\\
$\fLI(\bx)$ & The best (population) linear projection of $y$ onto $\bx$: $\bx^{\top}\bbeta_0$.\\
$\fNL(\bx)$ & The component of $y$ that is not explained by $\bx$: $y-\bx^{\top}\bbeta_0$.\\
$\bfLI$, $\bfNL$ & $\bfLI=\bX\bbeta_0$, $\bfNL = [\fNL(\bx_i)]_{i\in[n]}$.\\
\bottomrule
\end{tabularx}
\end{table}

\subsubsection*{A note on indexing of sequences}

In the subsequent sections,
we will prove the results for $n,k,p$ being a sequence of integers $\{n_m\}_{m=1}^{\infty}$, $\{k_m\}_{m=1}^{\infty}$, $\{p_m\}_{m=1}^{\infty}$.
Alternatively, one can also view $k$ and $p$ as sequences $k_n$ and $p_n$ that are indexed by $n$.
For notational brevity, we drop the subscript when it is clear from the context.

\subsection*{Organization}

Below we outline the structure of the rest of the supplement.
The technical lemmas refer to the main ingredients that we use
to prove results in the corresponding sections.

\begin{table}[!ht]\centering
    \caption{Outline of the supplement.}
    \begin{tabularx}{\textwidth}{C{1.6cm}L{7.5cm}L{6cm}}
        \toprule
         \textbf{Section} & \textbf{Description}  & \textbf{Technical lemmas} \\
         \midrule
         \Cref{sec:proof:generalized-risk} & Proof of \Cref{thm:equiv-risk}, \Cref{prop:equiv-risk-X} (from \Cref{sec:generalized-risk}).  & \Cref{lem:gen-risk-bias}, \Cref{lem:gen-risk-nonlinear}, \Cref{lem:gen-risk-cross}  \\\addlinespace[0.5ex] \arrayrulecolor{black!25}\midrule
         \Cref{sec:proof:structural-equivalence} & Proof of \Cref{thm:equiv-estimator-linear}, \Cref{prop:data-path}, \Cref{cor:gen-ridge} (from \Cref{sec:structural-equivalence}).  & \Cref{lem:mhat-vhat}, \Cref{lem:v-vhat} \\\addlinespace[0.5ex] \midrule
         \Cref{sec:proof:prop:monotonicity} & Proof of \Cref{prop:monotonicty} (from \Cref{sec:implication}). & \Cref{lem:v-equiv-path} \\\addlinespace[0.5ex] \midrule
         \Cref{sec:proof:lemma} & New asymptotic equivalents, concentration results and other useful lemmas (used in \Cref{sec:proof:generalized-risk,sec:proof:structural-equivalence,sec:proof:prop:monotonicity}). & \Cref{lem:det-equiv-full}, \Cref{lem:concen-linearform-uncorrelated}, \Cref{lem:concen-quadform-uncorrelated}, \Cref{lem:conv-norm} \\\addlinespace[0.5ex] \midrule
          \Cref{sec:known_lemmas} & Asymptotic equivalents: background and known results (used in \Cref{sec:proof:generalized-risk,sec:proof:structural-equivalence,sec:proof:prop:monotonicity,sec:proof:lemma}). & \cellcolor{lightgray!25} \\\addlinespace[0.5ex] \midrule
           \Cref{sec:experiment} & Additional details for all the experiments. & \cellcolor{lightgray!25} \\\addlinespace[0.5ex]
        \arrayrulecolor{black}\bottomrule
    \end{tabularx}
\end{table}

\clearpage
\section{Proofs of results in \Cref{sec:generalized-risk}}\label{sec:proof:generalized-risk}

\subsection{Proof of \Cref{thm:equiv-risk}}\label{subsec:proof:thm:equiv-risk}
    Given an observation $(\bx, y)$, recall the decomposition $y = \fLI(\bx) + \fNL(\bx)$ explained in \Cref{sec:preliminaries}.
    For $n$ i.i.d. samples from the same distribution as $(\bx,y)$, we define analogously the vector decomposition:
    \begin{align}
        \label{eq:li-nl-decomposition}
        \by = \bfLI +\bfNL,
    \end{align}
    where $\bfLI=\bX\bbeta_0$ and $\bfNL = [\fNL(\bx_i)]_{i\in[n]}$.
    Let $n_{\bA}=\mathrm{nrow}(\bA)$.
    Note that
    \begin{align*}
        R(\hbeta_{k,\infty}^{\lambda};\bA,\bb, \bbeta_0)
        &= R(\hbeta_{k,\infty}^{\lambda};\bA,\zero, \bbeta_0) +  2n_A^{-1}\bb^{\top}\bA(\hbeta_{k,\infty}^{\lambda} - \bbeta_0) + n_A^{-1}\|\bb\|_2^2 .
    \end{align*}
    By \Cref{thm:equiv-estimator-linear}, the cross term vanishes, i.e., $n_A^{-1}\bb^{\top}\bA(\hbeta_{k,\infty}^{\lambda} - \bbeta_0)\asto 0$.
    We then have
    \begin{align*}
        |R(\hbeta_{k_1,\infty}^{\lambda_1};\bA, \bb, \bbeta_0)  - R(\hbeta_{k_2,\infty}^{\lambda_2};\bA, \bb, \bbeta_0)|  &\asto  |R(\hbeta_{k_1,\infty}^{\lambda_1};\bA, \zero, \bbeta_0)  - R(\hbeta_{k_2,\infty}^{\lambda_2};\bA, \zero, \bbeta_0)| .
    \end{align*}
    It suffices to analyze $R(\hbeta_{k,\infty}^{\lambda};\bA,\zero, \bbeta_0)$.

    For simplicity, we treat $\bA$ as the normalized matrix $n_{\bA}^{-1/2} \bA$ to avoid the notation of $\mathrm{nrow}(\bA)$.
    Note that $\by = \bX\bbeta_0 + \bfNL+ \bepsilon$, where $\bbeta_0$ is the best linear projection of $\by$ on $\bX$, $\bfNL$ is the nonlinear residual and $\bepsilon$ is the independent noise. 
    Let $\bA_k^{\lambda} = \EE_{I\sim\cI_k}[(\bX^{\top} \bL_I \bX / k + \lambda\bI_p)^{-1}\bX^{\top} \bL_I / k]$ and $\bB_{k}^{\lambda} = \bI_p-\bA_k^{\lambda}\bX= \EE_{I\sim\cI_k}[\lambda(\bX^{\top} \bL_I \bX / k + \lambda\bI_p)^{-1}]$.
    We begin by decomposing the generalized risk for arbitrary $(\lambda,k)$ into different terms:
    \begin{align*}
        \ R(\hbeta_{k,\infty}^{\lambda};\bA, \zero,\bbeta_0)
        &= \|\bA(\hbeta_{k,\infty}^{\lambda} - \bbeta_0)\|_2^2\\
        &= \|\bA(\bA_{k}^{\lambda} \by - \bbeta_0)\|_2^2\\
        &= \|\bA\bA_{k}^{\lambda} (\bX\bbeta_0 + \bfNL) - \bA\bbeta_0\|_2^2\\
        &= \bbeta_0^{\top} (\bA_{k}^{\lambda} \bX-\bI_p)^{\top} \bA^{\top}\bA (\bA_{k}^{\lambda} \bX-\bI_p)\bbeta_0 + \bfNL^{\top} (\bA_{k}^{\lambda})^\top \bA^{\top}\bA \bA_{k}^{\lambda}\bfNL \\
        &\qquad + 2 \bfNL^{\top} (\bA_{k}^{\lambda})^\top \bA^{\top}\bA (\bA_{k}^{\lambda} \bX-\bI_p)\bbeta_0 \\
        &= \bbeta_0^{\top} \bB_{k}^{\lambda} \bA^{\top}\bA \bB_{k}^{\lambda}\bbeta_0 + \bfNL^{\top} (\bA_{k}^{\lambda})^\top \bA^{\top}\bA \bA_{k}^{\lambda}\bfNL
        - 2 \bfNL^{\top} (\bA_{k}^{\lambda})^\top \bA^{\top}\bA \bB_{k}^{\lambda} \bbeta_0.
    \end{align*}

    When $\lambda>0$, from \Cref{lem:gen-risk-cross}, we know that the cross terms $2 \bfNL^{\top} (\bA_{k}^{\lambda})^\top \bA^{\top}\bA \bB_{k}^{\lambda} \bbeta_0 $ vanishes as $p$ tends to infinity.
    Further, by \Cref{lem:gen-risk-bias} and \Cref{lem:gen-risk-nonlinear}, it follows that when $\bA\indep(\bX,\by)$, as $p/n\to\phi$ and $p/k\rightarrow\psi$,
    \begin{align*}
        |R(\hbeta_{k,\infty}^{\lambda};\bA, \zero, \bbeta_0) - R_p(\lambda;\phi,\psi) | \asto 0,
    \end{align*}
    where  the function $R_p$ is defined as
    \begin{align}
        R_p(\lambda;\phi,\psi) = \tc_p(-\lambda;\phi,\psi,\bA^{\top}\bA)  + \|\fNL\|_{L_2}^2\tv_p(-\lambda;\phi,\psi,\bA^{\top}\bA),  \label{eq:R_p}
    \end{align}
    and the nonnegative constants $\tc_p(-\lambda;\phi,\psi,\bA^{\top}\bA)$ and $\tv_p(-\lambda;\phi,\psi,\bA^{\top}\bA)$ are as defined in \Cref{lem:gen-risk-bias}.
    
    When $\lambda=0$, as in the proof of \Cref{thm:equiv-estimator-linear}, we again show that $P_{n,\lambda} - Q_{n,\lambda} $ is equivcontinuous over $\Lambda=[0,\lambda_{\max}]$ for any $\lambda_{\max}\in(0,\infty)$ fixed, where we define
    \begin{align*}
        P_{n,\lambda} = R(\hbeta_{k,\infty}^{\lambda};\bA, \zero,\bbeta_0), \quad \text{and} \quad Q_{n,\lambda} = R_p(\lambda;\phi,\psi).
    \end{align*}
    When $\psi\neq 1$, it can be verified that $|P_{n,\lambda}|$, $|\partial P_{n,\lambda}/\partial \lambda|$, $|Q_{n,\lambda}|$, $|\partial Q_{n,\lambda}/\partial \lambda|$ are bounded almost surely.
    Thus, by the Moore-Osgood theorem, it follows that, with probability one,
    \begin{align*}
        \lim_{n\rightarrow\infty}|R(\hbeta_{k,\infty}^{0};\bA, \zero,\bbeta_0) - R_p(0;\phi,\psi) | &= \lim_{\lambda\rightarrow0^+}\lim_{n\rightarrow\infty}|R(\hbeta_{k,\infty}^{\lambda};\bA, \zero,\bbeta_0) - R_p(\lambda;\phi,\psi) | = 0.
    \end{align*}
    
    Note that $\tc_p(-\lambda;\phi,\psi,\bA^{\top}\bA)$ and $\tv_p(-\lambda;\phi,\psi,\bA^{\top}\bA)$ are functions of the fixed-point solution $v_p(-\lambda;\psi)$.
    By \Cref{lem:v-equiv-path} we have that for $\bar{\psi}\in[\phi,\infty]$ there exists a segment $\cP$ such that for all $(\lambda_1,\psi_1),(\lambda_2,\psi_2)\in\cP$, it holds that
    $v(-\lambda_1; \psi_1) = v(-\lambda_2; \psi_2)$ as $p/k_1\to\psi_1$ and $p/k_2\to\psi_2$. 
    This implies that
    \begin{align*}
        R_p(\lambda_1;\phi,\psi_1) = R_p(\lambda_2;\phi,\psi_2).
    \end{align*}
    Thus, by triangle inequality, we have
    \begin{align*}
        |R(\hbeta_{k_1,\infty}^{\lambda_1};\bA, \zero, \bbeta_0) - R(\hbeta_{k_2,\infty}^{\lambda_2};\bA, \zero, \bbeta_0)| \asto 0,
    \end{align*}
   which completes the proof.

\subsection{Proof of \Cref{prop:equiv-risk-X}}\label{subsec:proof:prop:equiv-risk-X}

        When $\bA=\bX$ and $\bb=\bfNL$, the generalized risk reduces to
        \begin{align*}
            R(\hbeta_{k,\infty}^{\lambda};\bA, \bb, \bbeta_0) &= \frac{1}{n}\|\bX(\hbeta_{k,\infty}^{\lambda} - \bbeta_0) - \bfNL\|_2^2 \\
            &= \frac{1}{n}\|\bX\hbeta_{k,\infty}^{\lambda} - (\bX\bbeta_0 + \bfNL)\|_2^2 \\
            &= \frac{1}{n}\|\bX\bA_{k}^{\lambda} (\bX\bbeta_0 + \bfNL) - (\bX\bbeta_0 + \bfNL)\|_2^2\\
            &= \bbeta_0^{\top} \bB_{k}^{\lambda} \hSigma\bB_{k}^{\lambda}\bbeta_0 + \frac{1}{n}\bfNL^{\top} (\bI_p - \bX\bA_{k}^{\lambda})^{\top} (\bI_p - \bX\bA_{k}^{\lambda})\bfNL  + \frac{2}{n} \bfNL^{\top} (\bI_p - \bX\bA_{k}^{\lambda})^{\top}\bX \bB_{k}^{\lambda} \bbeta_0.
        \end{align*}
        The proof then follows analogously as in \Cref{thm:equiv-risk} by involving \Cref{lem:gen-risk-bias} and \Cref{lem:gen-risk-nonlinear} with $\bA=n^{-1/2}\bX$.

        When $\bA=\bX$ and $\bb=\zero$, we have
        \begin{align*}
            R(\hbeta_{k,\infty}^{\lambda};\bA, \bb, \bbeta_0)
            &=\bbeta_0^{\top} \bB_{k}^{\lambda} \hSigma\bB_{k}^{\lambda}\bbeta_0 + \bfNL^{\top} (\bA_{k}^{\lambda})^\top \hSigma\bA_{k}^{\lambda}\bfNL  - 2 \bfNL^{\top} (\bA_{k}^{\lambda})^\top \hSigma \bB_{k}^{\lambda} \bbeta_0.
        \end{align*}        
        Invoking \Cref{lem:gen-risk-nonlinear} for $\bfNL^{\top} (\bA_{k}^{\lambda})^\top \hSigma\bA_{k}^{\lambda}\bfNL$ instead of $\bfNL^{\top} (\bI_p - \bX\bA_{k}^{\lambda})^{\top} (\bI_p - \bX\bA_{k}^{\lambda})\bfNL$, the proof follows analogously.

\subsection{Technical lemmas}\label{subsec:proof:generalized-risk:lemmas}
\begin{lemma}[Bias of generalized risk]\label{lem:gen-risk-bias}
    Suppose the same assumptions in \Cref{thm:equiv-risk} hold with $\lambda>0$.
    Let $\bB_{k}^{\lambda}  = \bI_p - \bA_k^{\lambda}\bX= \EE_{I\sim\cI_k}[\lambda(\bX^{\top} \bL_I \bX / k + \lambda\bI_p)^{-1}]$.
    For any $\bA$ with $\limsup\|\bA\|_{\oper}$ bounded almost surely, the following statements hold:
    \begin{enumerate}[(1),leftmargin=7mm]
        \item If $\bA\indep(\bX,\by)$, then 
        \begin{align*}
            \bbeta_0^{\top} \bB_{k}^{\lambda} \bA^{\top}\bA\bB_{k}^{\lambda}  \bbeta_0 - \tc_p(-\lambda;\phi,\psi,\bA^{\top}\bA) \asto 0.
        \end{align*}

        \item If $\bA = n^{-1/2} \bX$, then
        \begin{align*}
            \bbeta_0^{\top} \bB_{k}^{\lambda} \bA^{\top}\bA\bB_{k}^{\lambda} \bbeta_0 - \left(1 - \phi\int\frac{v_p(-\lambda;\psi)r}{1+v_p(-\lambda;\psi)r}\rd H_p(r)\right)^2  \tc_p(-\lambda;\phi,\psi,\bSigma) \asto 0.
        \end{align*}
    \end{enumerate}
    Here the nonnegative constants $v_p(-\lambda;\psi)$, $\tv_p(-\lambda;\phi,\psi,\bA^{\top}\bA)$ and $\tc_p(-\lambda;\phi,\psi,\bA^{\top}\bA)$ are defined through the following equations:
    \begin{align*}
        \frac{1}{v_p(-\lambda;\psi)} &= \lambda+\psi \int\frac{r}{1+v_p(-\lambda;\psi)r }\rd H_p(r),\\
        \tv_p(-\lambda;\phi,\psi,\bA^{\top}\bA) &= \ddfrac{\phi \tr[\bA^{\top}\bA\bSigma(v_p(-\lambda;\psi)\bSigma+\bI_p)^{-2}]/p}{v_p(-\lambda;\psi)^{-2}-\phi \int\frac{r^2}{(1+v_p(-\lambda;\psi)r)^2}\rd H_p(r)},\\
        \tc_p(-\lambda;\phi,\psi,\bA^{\top}\bA) &=
        \bbeta_0^{\top}(v_p(-\lambda;\psi)\bSigma+\bI_p)^{-1}(\tv_p(-\lambda;\phi,\psi,\bA^{\top}\bA)\bSigma+\bA^{\top}\bA) (v_p(-\lambda;\psi)\bSigma+\bI_p)^{-1}\bbeta_0 .
    \end{align*}    
\end{lemma}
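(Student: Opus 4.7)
The plan is to reduce the quadratic form to a calculus of deterministic equivalents for the ensemble shrinkage operator $\bB_k^\lambda$. The starting point is the first-order equivalent
\[
\bB_k^\lambda \;=\; \EE_{I}\!\big[\lambda(\bX^{\top}\bL_I \bX/k + \lambda\bI_p)^{-1}\big] \;\asympequi\; (v_p(-\lambda;\psi)\,\bSigma + \bI_p)^{-1},
\]
which should follow from the single-subsample ridge-resolvent equivalent under \Cref{asm:feat} (standard RMT) combined with preservation of $\asympequi$ under the conditional expectation over simple random subsamples; both steps should be packaged into \Cref{lem:det-equiv-full}. The fixed-point equation for $v_p$ in the statement is precisely the companion Marchenko--Pastur equation at regularization $\lambda$ and aspect ratio $\psi$.

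\textbf{Second-order sandwich equivalent and part (1).} For the bias we need the sandwich $\bB_k^\lambda\,\bC\,\bB_k^\lambda$ with $\bC=\bA^{\top}\bA$, not just $\bB_k^\lambda$. Writing
\[
\bB_k^\lambda\,\bC\,\bB_k^\lambda \;=\; \EE_{I_1}\EE_{I_2}\!\big[\lambda^2(\hSigma_{I_1}+\lambda\bI_p)^{-1}\bC(\hSigma_{I_2}+\lambda\bI_p)^{-1}\big],
\]
I would obtain a two-resolvent deterministic equivalent either by differentiating the first-order equivalent in $\lambda$ (which converts a single resolvent into a sandwich by the identity $\partial_\lambda(\hSigma_I+\lambda\bI)^{-1} = -(\hSigma_I+\lambda\bI)^{-2}$) or by a direct leave-one-out / Schur-complement calculation on each factor. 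The independence of $I_1$ and $I_2$ in the outer and inner ensemble expectations decouples the two resolvents and produces the deterministic profile
\[
(v_p\bSigma+\bI_p)^{-1}\bigl(\tv_p(-\lambda;\phi,\psi,\bC)\,\bSigma + \bC\bigr)(v_p\bSigma+\bI_p)^{-1}.
\]
The denominator $v_p^{-2} - \phi\!\int r^2/(1+v_p r)^2\,\rd H_p$ in $\tv_p$ arises as the Jacobian from implicit differentiation of the fixed-point equation for $v_p$, and the numerator $\phi\,p^{-1}\tr[\bC\bSigma(v_p\bSigma+\bI_p)^{-2}]$ comes from the trace that quantifies the second-order fluctuation correction. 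Sandwiching between $\bbeta_0^{\top}$ and $\bbeta_0$ yields $\tc_p$, proving part~(1) since $\bC = \bA^{\top}\bA$ is independent of $(\bX,\by)$ and of bounded operator norm.

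\textbf{Handling the data-dependent $\bA = \bX/\sqrt{n}$ in part (2).} Here $\bC = \bX^{\top}\bX/n = \hSigma$, which depends on the subsamples driving $\bB_k^\lambda$. The idea is to exploit the identity $\hSigma_I(\hSigma_I+\lambda\bI_p)^{-1} = \bI_p - \lambda(\hSigma_I+\lambda\bI_p)^{-1}$ after decomposing, for each fixed subsample $I$, $\hSigma = (k/n)\hSigma_I + ((n-k)/n)\hSigma_{I^c}$. The $\hSigma_I$-part contracts algebraically with the adjacent resolvent; the $\hSigma_{I^c}$-part is independent of $\hSigma_I$ (by disjointness of the row index sets) and can therefore be handled by the equivalent already proved in part~(1) with $\bC=\bSigma$. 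After taking the ensemble expectation, the symmetry of simple random subsampling kills the cross terms between $I_1$ and $I_2$ and the $\hSigma_I$-contraction produces the scalar factor $1 - \phi\!\int \tfrac{v_p r}{1+v_p r}\,\rd H_p(r)$ on each side of the sandwich, which is exactly the trace of $\bI_p - \lambda(v_p\bSigma+\bI_p)^{-1}/\text{(normalization)}$ appearing in the target expression; the square then arises because both factors $\bB_k^\lambda$ contribute symmetrically.

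\textbf{Main obstacle.} The technically delicate step is justifying the two-resolvent ensemble equivalent uniformly in the test matrix $\bC$ under only bounded moments and no limiting spectral distribution (\Crefrange{asm:model}{asm:feat}); the existing single-resolvent equivalents must be upgraded to a product form and then pushed through $\EE_{I_1}\EE_{I_2}$, which should follow from the bounded-trace-norm characterization of $\asympequi$ but requires care to avoid any assumption on convergence of $H_p$. In part~(2), the additional bookkeeping for separating in-sample and out-of-sample contributions, and verifying that the cross terms between $\hSigma_{I_1\cap I_2}$, $\hSigma_{I_1\triangle I_2}$ and $\hSigma_{(I_1\cup I_2)^c}$ collapse to the displayed scalar shrinkage factor after taking expectations, is the main computational hurdle.
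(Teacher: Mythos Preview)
Your part (1) is correct and is essentially what the paper does: the two-resolvent sandwich equivalent
\[
\EE_{I_1,I_2}\!\bigl[\lambda^2(\hSigma_{I_1}+\lambda\bI_p)^{-1}\bC(\hSigma_{I_2}+\lambda\bI_p)^{-1}\bigr]
\;\asympequi\;
(v_p\bSigma+\bI_p)^{-1}\bigl(\tv_p\,\bSigma+\bC\bigr)(v_p\bSigma+\bI_p)^{-1}
\]
for $\bC$ independent of $(\bX,\by)$ is exactly \Cref{lem:det-equiv-full}\,\ref{item:lem:det-equiv-full-bias}, and tracing against $\bbeta_0\bbeta_0^\top$ gives $\tc_p$.

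Your part (2) has a genuine gap. You propose to split $\hSigma=(k/n)\hSigma_I+((n-k)/n)\hSigma_{I^c}$ ``for each fixed subsample $I$,'' contract the in-sample piece against the adjacent resolvent, and apply part (1) to the out-of-sample piece after replacing $\hSigma_{I^c}$ by $\bSigma$. But the sandwich $\bM_1\hSigma\bM_2$ involves \emph{two} subsamples, and $\hSigma_{I_1^c}$ is \emph{not} independent of $\bM_2$: the index set $I_2$ generically overlaps $I_1^c$. So the independence hypothesis of part (1) fails for $\bM_1\hSigma_{I_1^c}\bM_2$, and ``symmetry of simple random sampling'' does not restore it. Moreover, the algebraic contraction $\hSigma_I(\hSigma_I+\lambda\bI_p)^{-1}=\bI_p-\lambda(\hSigma_I+\lambda\bI_p)^{-1}$ does not by itself produce the scalar $1-\phi\!\int v_p r/(1+v_p r)\,\rd H_p$; that factor only appears after a nontrivial recombination using the fixed-point identity for $\lambda$.

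The paper instead decomposes with respect to both subsamples at once:
\[
\hSigma=\tfrac{|I_1\cup I_2|}{n}\,\hSigma_{I_1\cup I_2}+\tfrac{n-|I_1\cup I_2|}{n}\,\hSigma_{(I_1\cup I_2)^c}.
\]
Now $\hSigma_{(I_1\cup I_2)^c}$ is independent of \emph{both} resolvents, so it can be swapped for $\bSigma$ by the product rule and part (1) applies. The union piece $\bM_1\hSigma_{I_1\cup I_2}\bM_2$ cannot be contracted (neither resolvent matches $\hSigma_{I_1\cup I_2}$); the paper imports a separate equivalent for it from \cite{du2023gcv}. The squared shrinkage factor then emerges only after combining both pieces with the limits $|I_1\cup I_2|/n\to\phi(2\psi-\phi)/\psi^2$, $(n-|I_1\cup I_2|)/n\to(\psi-\phi)^2/\psi^2$, and substituting $\lambda=v_p^{-1}-\psi\!\int r/(1+v_p r)\,\rd H_p$ to collapse $(\phi\lambda v_p+\psi-\phi)^2/\psi^2$ into the displayed form. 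Your ``Main obstacle'' paragraph already names the correct partition $I_1\cap I_2,\;I_1\triangle I_2,\;(I_1\cup I_2)^c$, so you were close; but the single-subsample route described in the body does not go through.
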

\begin{proof}
    Note that $\bbeta_0$ is independent of
    \begin{align*}
        \bB_{k}^{\lambda} \bA^{\top}\bA\bB_{k}^{\lambda}= \lambda^2\EE_{I_1,I_2\sim\cI_k}[\bM_1\bA^{\top}\bA\bM_2], 
    \end{align*}
    where $\hSigma_{1\cap 2}=\bX^{\top}\bL_{I_1\cap I_2}\bX/|I_1\cap I_2|$ and $\bM_j=(\bX^{\top}\bL_{I_1}\bX/k + \lambda\bI_p)^{-1}$ for $j=1,2$.
    We analyze the deterministic equivalents of the latter for the two cases.

    \begin{enumerate}[(1), leftmargin=7mm]
          \item \underline{$\bA\indep (\bX,\by).$}

    From \Cref{lem:det-equiv-full}~\ref{item:lem:det-equiv-full-bias}, we know that when $\bA$ is independent to $(\bX,\by)$, it follows that
    \begin{align*}
        \lambda^2\EE_{I_1,I_2\sim\cI_k}[\bM_1\bA^{\top}\bA\bM_2] \asympequi  \left(v_p(-\lambda;\psi) \bSigma + \bI_p\right)^{-1} (\tv_p(-\lambda;\phi,\psi,\bA^{\top}\bA)\bSigma+\bA^{\top}\bA) \left(v_p(-\lambda;\psi) \bSigma + \bI_p\right)^{-1}. 
    \end{align*}
    Then, by the trace property of deterministic equivalents in \Cref{lem:calculus-detequi}~\ref{lem:calculus-detequi-item-trace}, we have
    \begin{align*}
        \bbeta_0^{\top} \bB_{k}^{\lambda} \bA^{\top}\bA\bB_{k}^{\lambda}  \bbeta_0
        &\xlongequal{\as}   \bbeta_0^{\top} \left(v_p(-\lambda;\psi) \bSigma + \bI_p\right)^{-1} (\tv_p(-\lambda;\phi,\psi,\bA^{\top}\bA)\bSigma+\bA^{\top}\bA)\left(v_p(-\lambda;\psi) \bSigma + \bI_p\right)^{-1} \bbeta_0\\
        &\xlongequal{\as}  \tc_p(-\lambda;\phi,\psi,\bA^{\top}\bA).
    \end{align*}

    % \underline{(2) $\bA=n^{-1/2}\bX$.}
    \item \underline{$\bA=n^{-1/2}\bX$.}
    
    From \Cref{lem:det-equiv-full}~\ref{item:lem:det-equiv-full-bias-X}, it follows that
    \begin{align*}
        \lambda^2\EE_{I_1,I_2\sim\cI_k}[\bM_1\hSigma\bM_2] &\asympequi \left(1 - \phi\int\frac{v_p(-\lambda;\psi)r}{1+v_p(-\lambda;\psi)r}\rd H_p(r)\right)\\
        &\qquad \cdot (1+\tv_p(-\lambda;\phi,\psi)) (v_p(-\lambda;\psi)\bSigma+\bI_p)^{-2}\bSigma .
    \end{align*}
    Then, by the trace property of deterministic equivalents in \Cref{lem:calculus-detequi}~\ref{lem:calculus-detequi-item-trace}, we have
    \begin{align*}
        \bbeta_0^{\top} \bB_{k}^{\lambda} \hSigma\bB_{k}^{\lambda}  \bbeta_0 &\xlongequal{\as} \left(1 - \phi\int\frac{v_p(-\lambda;\psi)r}{1+v_p(-\lambda;\psi)r}\rd H_p(r)\right) (1+\tv_p(-\lambda;\phi,\psi)) \cdot \\
        &\qquad  \bbeta_0^{\top} \left(v_p(-\lambda;\psi) \bSigma + \bI_p\right)^{-1} \bSigma\left(v_p(-\lambda;\psi) \bSigma + \bI_p\right)^{-1} \bbeta_0\\
        &\xlongequal{\as}  \left(1 - \phi\int\frac{v_p(-\lambda;\psi)r}{1+v_p(-\lambda;\psi)r}\rd H_p(r)\right)^2  \tc_p(-\lambda;\phi,\psi,\bSigma).
    \end{align*}
   \end{enumerate}

\end{proof}

\begin{lemma}[Variance term of generalized risk]\label{lem:gen-risk-nonlinear}
    Suppose the same assumptions in \Cref{thm:equiv-risk} hold with $\lambda>0$.
    Let $\bA_k^{\lambda} = \EE_{I\sim\cI_k}[(\bX^{\top} \bL_I \bX / k + \lambda\bI_p)^{-1}\bX^{\top} \bL_I / k]$ and $\bB_{k}^{\lambda}  = \bI_p - \bA_k^{\lambda}\bX = \EE_{I\sim\cI_k}[\lambda(\bX^{\top} \bL_I \bX / k + \lambda\bI_p)^{-1}]$.
    For any $\bA$ with $\limsup\|\bA\|_{\oper}$ bounded almost surely, the following statements hold:
    \begin{enumerate}[(1),leftmargin=7mm]
        \item If $\bA\indep(\bX,\by)$, then 
        \begin{align*}
            \bfNL^{\top} (\bA_{k}^{\lambda})^\top \bA^{\top}\bA \bA_{k}^{\lambda}\bfNL - \|\fNL\|_{L^2}^2 (1+\tv_p(-\lambda;\phi,\psi,\bSigma)) \asto 0.
        \end{align*}

        \item If $\bA = n^{-1/2}\bX$, then
        \begin{align*}
            % &\bfNL^{\top} \bB_k^{\lambda} \bA^{\top}\bA \bB_{k}^{\lambda}\bfNL  \\
            % &\quad \asto \|\fNL\|_{L^2}^2 \left(1 - \phi\int\frac{v_p(-\lambda;\psi)r}{1+v_p(-\lambda;\psi)r}\rd H_p(r)\right)^2
            % \cdot (1+\tv_p(-\lambda;\phi,\psi,\bSigma)), \\
            \bfNL^{\top} (\bA_{k}^{\lambda})^\top \bA^{\top}\bA \bA_{k}^{\lambda}\bfNL 
            & \asto \|\fNL\|_{L^2}^2\left(1 - \phi\int\frac{v_p(-\lambda;\psi)r}{1+v_p(-\lambda;\psi)r}\rd H_p(r)\right)^2 \cdot (1+\tv_p(-\lambda;\phi,\psi,\bSigma)) \\
            &\quad \qquad + \|\fNL\|_{L^2}^2 \left(2 \phi\int\frac{v_p(-\lambda;\psi)r}{1+v_p(-\lambda;\psi)r}\rd H_p(r) - 1\right).\\
            \frac{1}{n}\bfNL^{\top} (\bX \bA_{k}^{\lambda}-\bI_n)^{\top}(\bX \bA_{k}^{\lambda}-\bI_n) \bfNL & \asto \|\fNL\|_{L^2}^2\left(1 - \phi\int\frac{v_p(-\lambda;\psi)r}{1+v_p(-\lambda;\psi)r}\rd H_p(r)\right)^2 \cdot \sigma^2(1+\tv_p(-\lambda;\phi,\psi,\bSigma)) ,
        \end{align*}        
    \end{enumerate}
    Here the nonnegative constants $v_p(-\lambda;\psi)$ and $\tv_p(-\lambda;\phi,\psi,\bA^{\top}\bA)$ are defined through the following equations:
    \begin{align*}
        \frac{1}{v_p(-\lambda;\psi)} &= \lambda+\psi \int\frac{r}{1+v_p(-\lambda;\psi)r }\rd H_p(r), \\
        \tv_p(-\lambda;\phi,\psi,\bA^{\top}\bA) &= \ddfrac{\phi \tr[\bA^{\top}\bA\bSigma(v_p(-\lambda;\psi)\bSigma+\bI_p)^{-2}]/p}{v_p(-\lambda;\psi)^{-2}-\phi \int\frac{r^2}{(1+v_p(-\lambda;\psi)r)^2}\rd H_p(r)}.
    \end{align*}
    % (Above we use the convention $a_n \asto b_n$ to mean that $a_n - b_n \asto 0$ for margin space reasons.)
\end{lemma}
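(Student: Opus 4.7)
The goal is to replace the quadratic forms $\bfNL^\top \bM \bfNL$, where $\bM$ is built from the ensemble ridge resolvent $\bA_k^\lambda$ (and, in case~(2), from $\bA=\bX/\sqrt n$ as well), by a deterministic scalar equivalent proportional to $\|\fNL\|_{L_2}^2$. The central difficulty is that $\bfNL$ and $\bA_k^\lambda$ are dependent since both are functions of $\bX$. However, by construction of the best linear projection, each component $\fNL(\bx_i)=y_i-\bx_i^\top\bbeta_0$ is \emph{linearly uncorrelated} with $\bx_i$, namely $\EE[\bx_i\fNL(\bx_i)]=\zero$, and this is the structural property that drives the concentration.

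First I would unfold the ensemble average so that the target becomes a double expectation over the subsampling pair $(I_1,I_2)$ of $k^{-2}\bfNL^\top\bL_{I_1}\bX\bM_1\bA^\top\bA\bM_2\bX^\top\bL_{I_2}\bfNL$, where $\bM_j=(\hSigma_{I_j}+\lambda\bI_p)^{-1}$. For each index pair $(i,j)$ with $i\in I_1$, $j\in I_2$ in the resulting double sum I would apply a Sherman--Morrison/leave-one-out identity to peel $\bx_i$ out of $\bM_1$ and $\bx_j$ out of $\bM_2$, producing a representation of the form $\fNL(\bx_i)\bx_i^\top\bK^{(-i,-j)}\bx_j\fNL(\bx_j)$ (plus rank-one corrections) in which $\bK^{(-i,-j)}$ is independent of $(\bx_i,\bx_j)$. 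The diagonal contribution $i=j$ concentrates via conditional quadratic-form concentration to $\|\fNL\|_{L_2}^2\,\tr(\bSigma\bK^{(-i)})$ after averaging over $i$, while the off-diagonal mass is handled by \Cref{lem:concen-quadform-uncorrelated}, which is designed precisely to exploit $\EE[\bx_i\fNL(\bx_i)]=\zero$.

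Next I would invoke \Cref{lem:det-equiv-full} to replace the random trace $\tr(\bA^\top\bA\,\bA_k^\lambda(\bA_k^\lambda)^\top)$ by its deterministic equivalent in terms of the fixed-point scalar $v_p(-\lambda;\psi)$ and the derived functional $\tv_p(-\lambda;\phi,\psi,\cdot)$. The ``$1$'' in the factor $1+\tv_p$ is precisely the diagonal self-contribution identified above, while $\tv_p$ captures the off-diagonal resolvent trace after applying the defining relation $v_p^{-1}=\lambda+\psi\int r/(1+v_p r)\rd H_p(r)$. In case~(2), with $\bA=\bX/\sqrt n$, the extra $\bX$ factors are absorbed using the algebraic identity $\bA_k^\lambda\bX=\bI_p-\bB_k^\lambda$ together with the companion trace statement for $\bB_k^\lambda$ supplied by \Cref{lem:det-equiv-full} (asymptotic to $\phi\int v_p r/(1+v_p r)\rd H_p(r)$); this yields the scalar pre-factor $(1-\phi\int v_p r/(1+v_p r)\rd H_p(r))^2$, and the additive residual $2\phi\int v_p r/(1+v_p r)\rd H_p(r)-1$ appearing in the $\bA_k^\lambda$ (versus $\bB_k^\lambda$) version comes from cross-multiplying the ``$\bI_p$'' and ``$-\bB_k^\lambda$'' pieces of $\bA_k^\lambda\bX$.

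The principal obstacle is the decoupling step. A direct trace-level asymptotic equivalence cannot be invoked because $\bfNL\bfNL^\top$ is not independent of the resolvent, so the leave-one-out strategy of \cite{bartlett_montanari_rakhlin_2021} must be lifted from the single-ridge resolvent to the \emph{ensemble} resolvent $\bA_k^\lambda$. The latter carries an additional average over two random subsets with sampling operators $\bL_{I_1},\bL_{I_2}$, so the Sherman--Morrison corrections must be controlled uniformly across this subsampling average using only the $(4+\mu)$-moment assumptions on $y_i$ and the $z_{ij}$. Once this decoupling is in place, the remainder of the argument is algebra with the fixed-point equations defining $v_p$ and $\tv_p$.
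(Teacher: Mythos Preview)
Your high-level plan (leave-one-out decoupling of $\bfNL$ from the resolvents, off-diagonal vanishing via $\EE[\bx_i\fNL(\bx_i)]=\zero$, then replacing the surviving trace by its deterministic equivalent) is correct and matches the paper in spirit, but the execution you sketch diverges from the paper's in two important places.

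First, the paper does \emph{not} do a direct double leave-one-out on $\bM_1,\bM_2$. Instead it begins with the intersection decomposition $\bfNL=\bL_{I_1\cap I_2}\bfNL+\bL_{I_1\setminus I_2}\bfNL+\bL_{I_2\setminus I_1}\bfNL$, shows the non-intersection pieces contribute vanishing cross terms, and then \emph{conditions on $\bL_{I_1\cap I_2}\bX$} to replace both $\bM_1$ and $\bM_2$ by a common deterministic equivalent $\bM^{\det}$ (via Lemma~S.7.10 of \cite{patil2022bagging}). This collapses the two-resolvent object to a single-resolvent one, $\bX_0'\bM'\bA'\bM'\bX_0'^{\top}$, before any diagonal/off-diagonal analysis. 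Your direct route of peeling $\bx_i$ from $\bM_1$ and $\bx_j$ from $\bM_2$ is plausible, but note that when $i\in I_1\cap I_2$ the leave-one-out on $\bM_1$ alone does not decouple $\bx_i$ from $\bM_2$, so the ``$\bK^{(-i,-j)}$ independent of $(\bx_i,\bx_j)$'' representation you describe requires leaving each index out of \emph{both} resolvents and tracking four rank-one corrections per term; the intersection-conditioning trick sidesteps this entirely.

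Second, \Cref{lem:concen-quadform-uncorrelated} as stated applies to a single gram resolvent $(\bZ\bD\bZ^\top/n+\lambda\bI_n)^{-1}$, not to a product $\bB_1^{-1}\bB_2\bB_1^{-1}$. The paper bridges this gap with the matrix identity of \Cref{lem:matrix-identity}, writing $\bB_1^{-1}\bB_2\bB_1^{-1}=t^{-1}(\bB_1^{-1}-(\bB_1+t\bB_2)^{-1})+t\bB_1^{-1}\bB_2(\bB_1+t\bB_2)^{-1}\bB_2\bB_1^{-1}$ so that the off-diagonal sum reduces to two applications of \Cref{lem:concen-quadform-uncorrelated} plus an $O(t)$ remainder; you will need some such device. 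Also, your attribution of the ``$1$'' in $1+\tv_p$ to the diagonal self-contribution and $\tv_p$ to the off-diagonal trace is not how the paper arrives at the constants: in the paper both pieces come from the single trace $\tr[(\bA_k^\lambda)^\top\bA^\top\bA\bA_k^\lambda]$ after the off-diagonal sum is shown to vanish, and the trace is then evaluated via \Cref{lem:det-equiv-full}\ref{item:lem:det-equiv-full-var}.

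For case~(2), your identity $\bA_k^\lambda\bX=\bI_p-\bB_k^\lambda$ is the right starting point, but the paper proceeds via the decomposition $(\bA_k^\lambda)^\top\hSigma\bA_k^\lambda=n^{-1}(\bX\bA_k^\lambda-\bI_n)^\top(\bX\bA_k^\lambda-\bI_n)+n^{-1}(\bX\bA_k^\lambda+(\bA_k^\lambda)^\top\bX^\top)-n^{-1}\bI_n$ and then imports the GCV-numerator consistency result (Proposition~3.6 of \cite{du2023gcv}) for the first term rather than expanding algebraically; the additive residual $2\phi\int v_p r/(1+v_p r)\rd H_p-1$ comes from the last two terms of that display, not from cross-multiplying $\bI_p$ and $-\bB_k^\lambda$.
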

\begin{proof}
The two cases are treated separately below.

\begin{enumerate}[(1), leftmargin=7mm]
    \item \underline{$\bA\indep(\bX,\by)$.}

    We split the proof into three different parts.
    
    \textbf{Part (1) Intersection concentration.}
    Let $\bfNL = \bL_{I_1\cap I_2}\bfNL + \bL_{I_1\setminus I_2}\bfNL + \bL_{I_2\setminus I_1}\bfNL  =: \bbf_0+\bbf_1 + \bbf_2 $.
    Note that
    \begin{align*}
        \bfNL^{\top} \bA_{k}^{\lambda_1\top} \bA^{\top}\bA \bA_{k}^{\lambda_1} \bfNL
        &= \EE_{I_1,I_2\sim\cI_k}\left[ (\bbf_0+\bbf_1)^{\top} \frac{ \bX}{k}\bM_1\bA^{\top}\bA  \bM_2 \frac{\bX^{\top} }{k}(\bbf_0+\bbf_2)\right].
    \end{align*}
    where $\bM_j=(\bX^{\top}\bL_{I_2}\bX/k  + \lambda\bI_p)^{-1}$.
    By conditional independence and Lemma S.8.5 of \citep{patil2022mitigating}, the cross terms vanish
    \begin{align}
        \bbf_1^{\top} \frac{ \bX}{k}\bM_1\bA^{\top}\bA  \bM_2 \frac{\bX^{\top}\bL_2 }{k}\bfNL \asto 0,\quad \text{and} \quad \bfNL^{\top} \frac{ \bL_1\bX}{k}\bM_1\bA^{\top}\bA  \bM_2 \frac{\bX^{\top} }{k}\bbf_2 \asto 0. \label{eq:lem:gen-risk-nonlinear-eq-1}
    \end{align}
    It remains to analyze the quadratic term of $\bbf_0$:
    \begin{align*}
        \bbf_0^{\top} \frac{ \bL_{I_1\cap I_2}\bX}{k}\bM_1\bA^{\top}\bA  \bM_2 \frac{\bX^{\top} \bL_{I_1\cap I_2}}{k}\bbf_0.
    \end{align*}

    By conditioning on $\bL_{I_1\cap I_2}\bX$, from Lemma S.7.10 (1) of \citep{patil2022bagging}, we have
    \begin{align*}
        \bM_j \asympequi \bM^{\det}:=\frac{ k}{i_0}(\hSigma_{0} + \lambda\bI_p+\lambda\bC)^{-1},
    \end{align*}
    where $\hSigma_{0}=\bX_0^{\top}\bX_0/i_0$, $\bX_0=\bL_{I_1\cap I_2}\bX$, $\bC= (k-i_0)/i_0\cdot(v(-\lambda; \gamma_1,\bSigma_{\bC_1})\bSigma +\bI_p)$, $\bC_1=i_0(\lambda(k-i_0))^{-1}(\hSigma_0 + \lambda\bI_p)$ and $i_0=|I_1\cap I_2|$.
    Then we have
    \begin{align}
        \frac{ \bX_0}{k}\bM_1\bA^{\top}\bA  \bM_2 \frac{\bX_0^{\top}}{k} &\asympequi \frac{ k^2}{i_0^2} \frac{ \bX_0}{k}\bM^{\det}\bA^{\top}\bA  \bM^{\det}\frac{\bX_0^{\top}}{k} \notag\\
        &= \frac{ k^2}{i_0^2} \frac{ \bX_0'}{k}\bM'\bA'  \bM'\frac{\bX_0^{'\top}}{k} \label{eq:lem:gen-risk-nonlinear-eq-2}
    \end{align}
    where $\bX_0' = \bX_0(\bI_p+\bC)^{-1/2}$, $\bA' = (\bI_p+\bC)^{-1/2}\bA^{\top}\bA(\bI_p+\bC)^{-1/2} $, and $\bM'=((\bI_p+\bC)^{-1/2}\hSigma_0(\bI_p+\bC)^{-1/2}+\lambda\bI_p)^{-1}$.

    \textbf{Part (2) Diagonal concentration.}
    We next use a similar strategy as in the proof of Lemma A.16 in \citep{bartlett_montanari_rakhlin_2021} by showing that the off-diagonal summation vanishes.
    Let $\bSigma'=(\bI_p+\bC)^{-1/2}\bSigma(\bI_p+\bC)^{-1/2}$.
    Since $\bX_0=\bZ_0\bSigma^{1/2}$, we have $\bX_0'=\bZ_0\bSigma'^{1/2}$.
    Then, the quadratic form becomes:
    \begin{align}
        \frac{1}{k^2}\bbf_0^{\top} \bX_0'\bM'\bA'\bM' \bX_0^{'\top}\bbf_0
        &= \frac{i_0}{k^2}\bbf_0^{\top} \left(\frac{\bZ_0\bSigma'\bZ_0^{\top}}{i_0}+\lambda\bI_n\right)^{-1}\frac{\bZ_0}{\sqrt{i_0}}\bSigma'^{\frac{1}{2}}\bA' \bSigma'^{\frac{1}{2}} \frac{\bZ_0^{\top}}{\sqrt{i_0}} \left(\frac{\bZ_0\bSigma'\bZ_0^{\top}}{i_0}+\lambda\bI_n\right)^{-1} \bbf_0 \notag\\
        &=:\frac{i_0}{k^2} \bbf_0^{\top}\bB_1^{-1}\bB_2\bB_1^{-1}\bbf_0.\label{eq:lem:gen-risk-nonlinear-eq-3}       
    \end{align}
    Note that from \Cref{lem:matrix-identity}, we have for any $t>0$,
    \begin{align*}
        \bB_1^{-1}\bB_2\bB_1^{-1} &= \frac{1}{t}(\bB_1^{-1} - (\bB_1 + t \bB_2)^{-1})  + t \bB_1^{-1} \bB_2 (\bB_1 + t \bB_2)^{-1} \bB_2 \bB_1^{-1} .
    \end{align*}
    Let $\bU\in\RR^{n\times n}$ with $U_{ij}=[\bbf_0]_i[\bbf_0]_j\ind\{i\neq j\}$.
    We then have
    \begin{align}
        &\left|\sum_{1\leq i\neq j\leq n}[\bB_1^{-1}\bB_2\bB_1^{-1}]_{ij}[\bbf_0]_i[\bbf_0]_j\right| \notag\\
        &= |\langle \bB_1^{-1}\bB_2\bB_1^{-1} ,\bU\rangle| \notag\\
        &\leq \frac{1}{t}|\langle \bB_1^{-1},\bU\rangle| - \frac{1}{t}|\langle (\bB_1 + t\bB_2)^{-1},\bU\rangle| + t \|\bB_1^{-1}\|_{\oper}^2 \|\bB_2\|_{\oper}^2 \|(\bB_1 + t \bB_2)^{-1}\|_{\oper}\|\bU\|_{\tr}. \label{eq:lem:gen-risk-nonlinear-eq-4}
    \end{align}
    For the first two terms, \Cref{lem:concen-quadform-uncorrelated} implies that
    \begin{align*}
        \frac{1}{k}|\langle \bB_1^{-1} ,\bU\rangle| \asto 0,
        \quad 
        \text{and}
        \quad
        \frac{1}{k}|\langle (\bB_1 + t\bB_2)^{-1},\bU\rangle|\asto 0.
    \end{align*}
    For the last term, note that
     $\|\bB_2\|_{\oper}\leq \|\bA\|_{\oper}^2\|\hSigma_0\|_{\oper}$, where $\|\bA\|_{\oper}$ is almost surely bounded as assumed, and $\|\hSigma_0\|_{\oper}\leq r_{\max}(1+\sqrt{\psi^2/\phi})^2$ almost surely as $k,n,p\rightarrow\infty$ and $p/n\rightarrow\phi\in(0,\infty)$, $p/k\rightarrow\psi\in[\phi,\infty]$ (see, e.g., \citep{bai2010spectral}).
     Also, $\|\bU\|_*\leq 2\|\bfNL\|_2^2 \asto 2\|\fNL\|_{L_2}^2<\infty$ from the strong law of large numbers, \Cref{lem:conv-norm}, and $\|\bB_1\|_{\oper}\leq \lambda^{-1}$.
    Thus, the last term is almost surely bounded.
    It then follows that $ k^{-1}|\langle \bB_1^{-1}\bB_2\bB_1^{-1} ,\bU\rangle| \asto 0$.
    Therefore,
    \begin{align*}
        \left|\frac{1}{k}\bbf_0^{\top}\bB_1^{-1}\bB_2\bB_1^{-1}\bbf_0 - \frac{1}{k}\sum_{i=1}^n [\bB_1^{-1}\bB_2\bB_1^{-1}]_{ii}[\bbf_0]_i^2\right| \asto 0.
    \end{align*}

    \textbf{Part (3) Trace concentration.}
    From the results in \citep{knowles2017anisotropic}, it holds that
    \begin{align*}
        \max_{1\leq i\leq n}\left|[\bB_1^{-1}\bB_2\bB_1^{-1}]_{ii} - \frac{1}{n}\tr[\bB_1^{-1}\bB_2\bB_1^{-1}]\right| \asto 0.
    \end{align*}
    Further, $n^{-1}\sum_{i=1}^n\bbf_i^2\asto \|\fNL\|_{L^2}^2$ by strong law of large number and \Cref{lem:conv-norm}.
    Therefore, we have
    \begin{align}
        \frac{1}{k}|\bbf^{\top} \bB_1^{-1}\bB_2\bB_1^{-1}\bbf - \tr[\bB_1^{-1}\bB_2\bB_1^{-1}]\|\fNL\|_{L^2}^2|\asto 0.\label{eq:lem:gen-risk-nonlinear-eq-5}
    \end{align}
    Combining \eqref{eq:lem:gen-risk-nonlinear-eq-1}, \eqref{eq:lem:gen-risk-nonlinear-eq-2}, and \eqref{eq:lem:gen-risk-nonlinear-eq-5} yields that
    \begin{align*}
        \left|\bbf^{\top} (\bA_{k}^{\lambda})^\top \bA^{\top}\bA \bA_{k}^{\lambda}\bbf - \tr[(\bA_{k}^{\lambda})^\top \bA^{\top}\bA \bA_{k}^{\lambda}] \cdot \|\fNL\|_{L^2}^2\right|\asto 0.
    \end{align*}
    By the trace property $\tr[(\bA_{k}^{\lambda})^\top \bA^{\top}\bA \bA_{k}^{\lambda}] = \tr[\bA_{k}^{\lambda}(\bA_{k}^{\lambda})^\top \bA^{\top}\bA]$, it remains to derive the deterministic equivalents of $\bA_{k}^{\lambda}(\bA_{k}^{\lambda})^\top \bA^{\top}\bA$.
    Note that
    \begin{align*}
        \bA_{k}^{\lambda} (\bA_{k}^{\lambda})^\top \bA^{\top}\bA  &=  \frac{|I_1\cap I_2|}{k^2}\EE_{I_1,I_2\sim\cI_k}[\bM_1\hSigma_{1\cap 2}\bM_2]\bA^{\top}\bA
    \end{align*}
    where $\hSigma_{1\cap 2}=\bX^{\top}\bL_{I_1\cap I_2}\bX/|I_1\cap I_2|$ and $\bM_j=(\bX^{\top}\bL_{I_1}\bX/k + \lambda\bI_p)^{-1}$ for $j=1,2$.
    The above quantity is well-defined almost surely because $|I_1\cap I_2|$ converges to some positive quantity almost surely.
    Next, we analyze the trace term for the two cases.

    From \Cref{lem:det-equiv-full}~\ref{item:lem:det-equiv-full-var} we know that when $\bA$ is independent to $(\bX,\by)$, it follows that
    \begin{align*}
        \EE_{I_1,I_2\sim\cI_k}[\bM_1\hSigma_{1\cap 2}\bM_2] \bA^{\top}\bA &\asympequi \phi^{-1}\tv_v(-\lambda;\phi,\psi)( v_p(-\lambda;\psi) \bSigma + \bI_p)^{-2}\bSigma \bA^{\top}\bA .
    \end{align*}
    We now have
    \begin{align*}
        \bfNL^{\top} (\bA_{k}^{\lambda})^\top \bA^{\top}\bA \bA_{k}^{\lambda}\bfNL &\xlongequal{\as} \|\fNL\|_{L^2}^2 \frac{p}{k} \cdot \frac{|I_1\cap I_2|}{k}\cdot \frac{1}{p}\tr[\EE_{I_1,I_2\sim\cI_k}[\bM_1\hSigma_{1\cap 2}\bM_2]\bA^{\top}\bA]\\
        &\xlongequal{\as} \|\fNL\|_{L^2}^2 \psi \cdot \frac{\phi}{\psi} \cdot \frac{1}{\phi}\tv_p(-\lambda;\phi,\psi,\bA^{\top}\bA)\\
        &= \|\fNL\|_{L^2}^2\tv_p(-\lambda;\phi,\psi,\bA^{\top}\bA),
    \end{align*}
    where the second convergence is from Lemma S.8.3 of \citep{patil2022bagging} and the trace property in \Cref{lem:calculus-detequi}~\ref{lem:calculus-detequi-item-trace}.

    \item 
    \underline{$\bA=n^{-1/2}\bX$.}
    
    Instead of working on \eqref{eq:lem:gen-risk-nonlinear-eq-2}, we use the following decomposition:
    \begin{align*}
        \bM_1\hSigma  \bM_2 &=  \bM_1\hSigma_{1}  \bM_2 + \hSigma_{2}  \bM_2 - \bM_1\hSigma_{1\cap 2}  \bM_2 + \bM_1\hSigma_{(1\cup 2)^c}  \bM_2\notag\\
        &= \sum_{j=1}^2\bM_j - 2\lambda \bM_1\bM_2 - \bM_1\hSigma_{1\cap 2}  \bM_2  + \bM_1\hSigma_{(1\cup 2)^c}  \bM_2
    \end{align*}
    Then, repeating Part (2) and (3) as above, it suffices to derive the deterministic equivalents of $(\bA_{k}^{\lambda})^\top \hSigma \bA_{k}^{\lambda}$.
    We decompose this term into:
    \begin{align}
        (\bA_{k}^{\lambda})^\top \bA^{\top}\bA \bA_{k}^{\lambda} &= \frac{1}{n} (\bX \bA_{k}^{\lambda}-\bI_n)^{\top}(\bX \bA_{k}^{\lambda}-\bI_n) + \frac{1}{n}(\bX \bA_{k}^{\lambda}+\bX^{\top} (\bA_{k}^{\lambda})^\top) -\frac{1}{n}\bI_n. \label{eq:lem:gen-risk-nonlinear-eq-6}
    \end{align}
    For the last two terms of the right hand side, following the similar argument as in Part (1), it holds that
    \begin{align}
        \frac{2}{n}\tr[\bX \bA_{k}^{\lambda}]- 2\phi\int\frac{v_p(-\lambda;\psi)r}{1+v_p(-\lambda;\psi)r}\rd H_p(r)\asto 0, \quad \text{and} \quad \frac{1}{n}\tr[\bI_n]=1. \label{eq:lem:gen-risk-nonlinear-eq-7}
    \end{align}
    For the first term of the right hand side, notice that it is the variance term of the mean squared error computed on all samples, which is the variance term of the numerator of the generalized cross-validation (GCV) estimator in \citep{du2023gcv}.
    It has been shown in Proposition 3.6 of \citep{du2023gcv} that in the full ensemble, the GCV estimator is consistent to the prediction risk, which is also true for the variance term:
    \begin{align}
        \frac{1}{n}\tr[(\bX \bA_{k}^{\lambda}-\bI_n)^{\top}(\bX \bA_{k}^{\lambda}-\bI_n)] \asto \left(1 - \phi\int\frac{v_p(-\lambda;\psi)r}{1+v_p(-\lambda;\psi)r}\rd H_p(r)\right)^2 \cdot \sigma^2(1+\tv_p(-\lambda;\phi,\psi,\bSigma)). \label{eq:lem:gen-risk-nonlinear-eq-8},
    \end{align}
    Finally, combining \eqref{eq:lem:gen-risk-nonlinear-eq-3}-\eqref{eq:lem:gen-risk-nonlinear-eq-5} with the above finishes the proof for the first convergence result of $\bA=n^{-1/2}\bX$.
    
    % Again, repeating Part (2) and (3) as above, it suffices to derive the deterministic equivalents of $\bB_k^{\lambda} \hSigma \bB_{k}^{\lambda}$.
    % Notice that it is the variance term of the mean squared error computed on all samples, which is the variance term of the numerator of the generalized cross-validation (GCV) estimate in \citep{du2023gcv}.
    % It has been shown in Proposition 3.6 of \citep{du2023gcv} that in the full ensemble, the GCV estimate is consistent to the prediction risk, which is also true for the variance term:
    % \begin{align}
    %     \frac{1}{n}\tr[\bB_k^{\lambda} \hSigma \bB_{k}^{\lambda}]
    %     \asto \left(1 - \phi\int\frac{v_p(-\lambda;\psi)r}{1+v_p(-\lambda;\psi)r}\rd H_p(r)\right)^2 \cdot \sigma^2(1+\tv_p(-\lambda;\phi,\psi,\bSigma)). \label{eq:lem:gen-risk-nonlinear-eq-8}
    % \end{align}
    % Here the coefficient corresponds to $\sD_{\infty}^{\lambda}$ the asymptotic denominator of the GCV as defined in Equation (27) of \citep{du2023gcv}, and the second term $\sigma^2(1+\tv_p(-\lambda;\phi,\psi,\bSigma))$ corresponds to asymptotic variance $\sigma^2+V_{\lambda}(\phi, \psi)$ as defined in Lemma A.2 of \citep{du2023gcv}.
    Analogously, from \eqref{eq:lem:gen-risk-nonlinear-eq-8}, we also have
    \begin{align*}
        \frac{1}{n}\bfNL^{\top} (\bX \bA_{k}^{\lambda}-\bI_n)^{\top}(\bX \bA_{k}^{\lambda}-\bI_n) \bfNL & \asto \|\fNL\|_{L^2}^2\left(1 - \phi\int\frac{v_p(-\lambda;\psi)r}{1+v_p(-\lambda;\psi)r}\rd H_p(r)\right)^2 \cdot \sigma^2(1+\tv_p(-\lambda;\phi,\psi,\bSigma)) ,
    \end{align*}
    which finishes the proof for the second convergence result of $\bA=n^{-1/2}\bX$.
    \end{enumerate}

\end{proof}

\begin{lemma}[Cross terms of generalized risk]\label{lem:gen-risk-cross}
    Suppose the same assumptions in \Cref{thm:equiv-risk} hold with $\lambda>0$.
    Let $\bA_k^{\lambda} = \EE_{I\sim\cI_k}[(\bX^{\top} \bL_I \bX / k + \lambda\bI_p)^{-1}\bX^{\top} \bL_I / k]$ and $\bB_{k}^{\lambda}  = \bI_p - \bA_k^{\lambda}\bX= \EE_{I\sim\cI_k}[\lambda(\bX^{\top} \bL_I \bX / k + \lambda\bI_p)^{-1}]$.
    For any $\bA$ with $\limsup\|\bA\|_{\oper}$ bounded almost surely, it holds~that
    \begin{align*}
         \bfNL^{\top} (\bA_{k}^{\lambda})^\top \bA^{\top}\bA \bB_{k}^{\lambda} \bbeta_0\asto 0.
    \end{align*}
\end{lemma}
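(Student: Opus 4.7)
The plan is to exploit the defining orthogonality of the best linear projection, namely $\EE[\bx\fNL(\bx)] = 0$, which is the population analogue of ``$\bfNL$ is uncorrelated with $\bX$''. This should force the cross term to vanish once the dependence between $\bfNL$ and the ridge resolvents is disentangled via a leave-one-out reduction.

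First, I would unfold the full-ensemble expectation:
\[
T \;:=\; \bfNL^{\top}(\bA_k^{\lambda})^{\top}\bA^{\top}\bA \bB_k^{\lambda}\bbeta_0
\;=\; \EE_{I_1, I_2 \sim \cI_k}\!\left[\frac{\lambda}{k}\, \bfNL^{\top}\bL_{I_1}\bX\, \bM_1 \bA^{\top}\bA\, \bM_2 \bbeta_0\right],
\]
where $\bM_j := (\bX^{\top}\bL_{I_j}\bX/k + \lambda\bI_p)^{-1}$. For each fixed $(I_1, I_2)$, I would condition on $\bL_{I_2}\bX$ and treat $\bv := \lambda\bM_2\bbeta_0 \in \RR^p$ as (conditionally) fixed, with $\|\bv\|_2 \le \|\bbeta_0\|_2$ almost surely bounded. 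It then suffices to prove the per-realization convergence
\[
\frac{1}{k}\bfNL^{\top}\bL_{I_1}\bX\, \bM_1 \bA^{\top}\bA\, \bv \;\asto\; 0.
\]

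The main obstacle is that $\bM_1$ depends on $\{\bx_i\}_{i\in I_1}$, so $\bfNL$ and the right-hand factor are not independent. I would remove this dependence by a Sherman–Morrison leave-one-out expansion: for each $i \in I_1$,
\[
\bx_i^{\top}\bM_1 \;=\; \frac{\bx_i^{\top}\bM_{1,(i)}}{1 + \bx_i^{\top}\bM_{1,(i)}\bx_i/k},
\]
where $\bM_{1,(i)}$ denotes the analogue of $\bM_1$ computed after deleting the $i$-th sample. The per-realization sum becomes
\[
\frac{1}{k}\sum_{i \in I_1} \frac{\fNL(\bx_i)\, \bx_i^{\top}\bM_{1,(i)}\bA^{\top}\bA\,\bv}{1 + \bx_i^{\top}\bM_{1,(i)}\bx_i/k}.
\]
Now $(\bx_i, \fNL(\bx_i))$ is independent of $(\bM_{1,(i)}, \bv)$, and the numerator of each summand has mean zero thanks to $\EE[\bx\fNL(\bx)] = 0$. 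Combined with the almost-sure boundedness of $\|\bM_{1,(i)}\bA^{\top}\bA\bv\|_2$ (using $\|\bM_{1,(i)}\|_{\oper} \le \lambda^{-1}$ and the bounded-operator assumption on $\bA$) and the bounded $(4+\mu)$-moment of $\fNL(\bx)$ (which follows from \Cref{asm:model,asm:feat}), I would invoke \Cref{lem:concen-linearform-uncorrelated} on the numerator sum to obtain almost-sure convergence to zero. The denominators $1 + \bx_i^{\top}\bM_{1,(i)}\bx_i/k$ are uniformly bounded in $[1, 1 + C)$ almost surely by standard RMT arguments (e.g., \Cref{lem:det-equiv-full} and the spectral bound on $\bSigma$).

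Finally, to return to $T$, I would interchange the expectation $\EE_{I_1, I_2}$ with the almost-sure limit via dominated convergence: the integrand is uniformly bounded almost surely by $\lambda^{-1} \|\bA^{\top}\bA\|_{\oper} \|\bbeta_0\|_2 \cdot k^{-1/2}\|\bfNL\|_2 \cdot k^{-1/2}\|\bL_{I_1}\bX\|_{\oper}$, which is almost surely $\cO(1)$ uniformly in $(I_1, I_2)$ by \Cref{lem:conv-norm} and the standard operator norm bound on subsampled RMT matrices. This yields $T \asto 0$, as required. The most delicate step is uniformity of the leave-one-out bounds across $i \in I_1$ and across all ensemble draws; this is handled exactly in the style of the proof of \Cref{lem:gen-risk-nonlinear} above (cf.\ the diagonal/trace concentration argument and Lemma S.8.5 of \citep{patil2022mitigating}).
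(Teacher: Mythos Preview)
There is a genuine gap in the independence step. After you condition on $\bL_{I_2}\bX$ and freeze $\bv = \lambda\bM_2\bbeta_0$, the samples $\{\bx_i : i \in I_1 \cap I_2\}$ are part of what you conditioned on, so for those indices $\bx_i$ is no longer random and the claimed independence ``$(\bx_i, \fNL(\bx_i))$ is independent of $(\bM_{1,(i)}, \bv)$'' is vacuous---there is nothing left to concentrate. If instead you do \emph{not} condition and simply try to apply \Cref{lem:concen-linearform-uncorrelated} with $\ba = \bA^{\top}\bA\bv$, the hypothesis of that lemma fails: it requires $\ba$ to be independent of the $\bZ$-block entering the resolvent, but $\bv$ depends on every $\bx_i$ with $i \in I_2$, and the overlap $I_1 \cap I_2$ has size $\asymp k\phi/\psi$, which is a nonvanishing fraction of $k$. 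Your leave-one-out on $\bM_1$ alone does not touch this dependence through $\bM_2$.

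The paper's proof confronts exactly this obstruction. It splits $\bL_{I_1}\bfNL$ into the pieces supported on $I_1\cap I_2$ and $I_1\setminus I_2$; the latter is handled by conditional independence (your argument is essentially correct there), while for the intersection piece both $\bM_1$ and $\bM_2$ are simultaneously replaced, conditional on $\bL_{I_1\cap I_2}\bX$, by a single deterministic-equivalent resolvent $\bM'$ built from the intersection data (the reduction in \eqref{eq:lem:gen-risk-nonlinear-eq-2}). Only after this reduction is the structure $\bbf_0^{\top}\bX_0'\bM'(\cdots)$ amenable to \Cref{lem:concen-linearform-uncorrelated}, because now the single resolvent and the vector on the right are functions of the \emph{same} block that $\bbf_0$ multiplies, and the lemma's internal leave-one-out handles that dependence. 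A second splitting (via $\bM'\hSigma_0' = \bI - \lambda\bM'$ and the identity in \Cref{lem:matrix-identity}) is then needed to reduce the product $\bM'\bA'\bM'$ to forms the lemma can absorb. Your proposal skips this entire intersection reduction, which is the crux of the argument.
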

\begin{proof}
    Note that
    \begin{align*}
        n\|(\bA_k^{\lambda})^\top \bA^{\top}\bA\bB_k^{\lambda} \bbeta_0\|_2^2 &\leq \frac{n}{k}\|\bA_{k}^{\lambda}\|_{\oper}^2\|\bA\|_{\oper}^4\|\bB_k^{\lambda}\|_{\oper}^2\|\bbeta_0\|_2^2 \\
        &\leq \frac{n}{k}\EE_{I\sim\cI_k}[\|\hSigma_I\|_{\oper}^{\frac{1}{2}}\|\lambda(\bX^{\top} \bL_I \bX / k + \lambda\bI_p)^{-1}\|_{\oper}]^2\cdot\|\bB_{k}^{\lambda}\|_{\oper}^2\|\bA\|_{\oper}^4 \|\bbeta_0\|_2^2\\
        &\leq \frac{n}{k}\EE_{I\sim\cI_k}[\|\hSigma_I\|_{\oper}^{\frac{1}{2}}]^2\cdot\|\bA\|_{\oper}^4 \|\bbeta_0\|_2^2.
    \end{align*}
    Let $s_j^2$ be the singular value of $\hSigma_j$. 
    From the results in \cite{bai2010spectral}, we have $\limsup\|\hSigma_I\|_{\oper}\leq \limsup\max_{1\leq i\leq p} s_i^2\leq r_{\max}(1+\sqrt{\psi})^2$ almost surely as $k,p\rightarrow\infty$ and $p/k\rightarrow\psi\in(0,\infty]$.
    On the other hand, $n/k\asto \psi/\phi$ and $\|\bA\|_2$ is uniformly bounded as assumed.
    Thus, we have $\limsup n\|(\bA_k^{\lambda})^\top \bA^{\top}\bA \bB_k^{\lambda} \bbeta_0\|_2^2$ is bounded almost surely.   

    Similar to the proof of \Cref{thm:equiv-risk}, we decompose $\bfNL$ as $\bfNL = \bL_{I_1\cap I_2}\bfNL + \bL_{I_1\setminus I_2}\bfNL + \bL_{I_2\setminus I_1}\bfNL  =: \bbf_0+\bbf_1 + \bbf_2 $.
    We then have    
    \begin{align*}
        & \bfNL^{\top}\frac{ \bL_{I_1\cap I_2}\bX}{k}\bM_1\bA^{\top}\bA  \bM_2 \frac{\bX^{\top} \bL_{I_1\cap I_2}\bX}{k}\bbeta_0\\
        &\asto 
        \bbf_0^{\top}\frac{ \bL_{I_1\cap I_2}\bX}{k}\bM_1\bA^{\top}\bA  \bM_2 \frac{\bX^{\top} \bL_{I_1\cap I_2}\bX}{k}\bbeta_0\\
        &\asto \frac{i_0}{k}
        \bbf_0^{\top}\bL_{I_1\cap I_2}\frac{ k^2}{i_0^2} \frac{ \bX_0'}{k}\bM'\bA'  \bM'\hSigma_0'\bbeta_0\\
        &= \frac{i_0}{k}
        \bbf_0^{\top}\bL_{I_1\cap I_2}\frac{ k^2}{i_0^2} \frac{ \bX_0'}{k}\bM'\bA'  \bbeta_0  - \lambda\frac{i_0}{k}
        \bbf_0^{\top}\bL_{I_1\cap I_2}\frac{ k^2}{i_0^2} \frac{ \bX_0'}{k}\bM'\bA'  \bM'\bbeta_0 ,
    \end{align*}
    where the first convergence is from Lemma S.8.5 of \citep{patil2022mitigating} and the second convergence is from \eqref{eq:lem:gen-risk-nonlinear-eq-2}, with $\bX_0' = \bX_0(\bI_p+\bC)^{-1/2}$, $\bA' = (\bI_p+\bC)^{-1/2}\bA^{\top}\bA(\bI_p+\bC)^{-1/2} $, and $\bM'=(\hSigma_0'+\lambda\bI_p)^{-1}$ and $\hSigma_0'=(\bI_p+\bC)^{-1/2}\hSigma_0(\bI_p+\bC)^{-1/2}$.
    From \Cref{lem:concen-linearform-uncorrelated}, the first term in the above display vanishes.
    Analogous to \eqref{eq:lem:gen-risk-nonlinear-eq-4}, the second term also vanishes by splitting and applying \Cref{lem:concen-linearform-uncorrelated}.
    Thus, we have for $I_1,I_2\sim\cI_k$,
    \begin{align*}
        \bfNL^{\top}\frac{ \bL_{I_1\cap I_2}\bX}{k}\bM_1\bA^{\top}\bA  \bM_2 \frac{\bX^{\top} \bL_{I_1\cap I_2}\bX}{k}\bbeta_0\asto 0.
    \end{align*}
    Finally, by Lemma G.5 (2) of \citep{du2023gcv}, the conclusion follows.
\end{proof}

\section{Proofs of results in \Cref{sec:structural-equivalence}}\label{sec:proof:structural-equivalence}

\subsection{Proof of \Cref{thm:equiv-estimator-linear}}
\label{subsec:proof:thm:equiv-estimator-linear} 
    We consider two cases.

    \begin{enumerate}[(1), leftmargin=7mm]
    
    \item \underline{$\lambda_1,\lambda_2> 0$.}

    We begin to prove for the full ensemble when $M=\infty$.
    For $j=1,2$ and $I_j\sim\cI_{k_j}$, define $\hSigma_j=\bX^{\top}\bL_{I_j}\bX/n$ and $\bM_j = (\bX^{\top} \bL_{I_j} \bX / k + \lambda_j\bI_p)^{+}$.
    Recall that $\by=\bX\bbeta_0+\bfNL$ and
    \begin{align*}
        \hbeta_{k_j,\infty}^{\lambda_j} &= \EE_{I_j\sim\cI_{k_j}}\left[\left(\frac{1}{k_j}\bX^{\top} \bL_{I_j} \bX  + \lambda_j\bI_p\right)^{-1}\frac{\bX^{\top} \bL_{I_j} \by}{k_j} \right]\\
        &=\EE_{I_j\sim\cI_{k_j}}[\bM_j\hSigma_j ] \bbeta_0 + \EE_{I_j\sim\cI_{k_j}}[\bM_j\bX^{\top}\bL_{I_j}/k_j ] \bfNL.
    \end{align*}
    Then, for $\ba\in\RR^p$ with bounded $l_2$ norm, we have
    \begin{align}
        \ba^{\top}(\hbeta_{k_1,\infty}^{\lambda_1} - \hbeta_{k_2,\infty}^{\lambda_2}) &=\underbrace{\ba^{\top} ( \EE_{I_1\sim\cI_{k_1}}[\bM_1\hSigma_1] - \EE_{I_2\sim\cI_{k_2}}[\bM_2\hSigma_2]) \bbeta_0}_{T_1} \notag \\
        &\qquad + \underbrace{\ba^{\top} ( \EE_{I_1\sim\cI_{k_1}}[\bM_1\bX^{\top}\bL_{I_1}]/k_1 - \EE_{I_2\sim\cI_{k_2}}[\bM_2\bX^{\top}\bL_{I}]/k_2) \bfNL}_{T_2}. \label{eq:thm:equiv-estimator-linear-eq-1}
    \end{align}
    Next, we analyze the three terms separately.
    
    For the first term, from \Cref{lem:det-equiv-full}~\ref{item:lem:det-equiv-ridge} we have for $\lambda_j>0$,
    \begin{align*}
        \EE_{I_j\sim\cI_{k_j}}[\bM_j\hSigma_j] = \bI_p - \EE_{I_j\sim\cI_{k_j}}[\lambda_j\bM_j] \asympequi \bI_p - (v(-\lambda_j;\psi_j)\bSigma+\bI_p)^{-1},
    \end{align*}
    where $v(-\lambda_j;\psi_j)$ is as defined in \eqref{eq:basic-ridge-equivalence-v-fixed-point}.
    By \Cref{lem:v-equiv-path} we have that for $\bar{\psi}\in[\phi,\infty]$ there exists a segment $\cP$ such that for all $(\lambda_1,\psi_1),(\lambda_2,\psi_2)\in\cP$, it holds that
    $v(-\lambda_1; \psi_1) = v(-\lambda_1; \psi_2)$ as $p/k_1\to\psi_1$ and $p/k_2\to\psi_2$. 
    By the definition of deterministic equivalents (\Cref{def:deterministic-equivalent}), it follows that
    \begin{align*}
        T_1 &= \tr[\bbeta_0\ba^{\top}\EE_{I_1\sim\cI_{k_1}}[\bM_1\hSigma_1]] - \tr[\bbeta_0\ba^{\top}\EE_{I_2\sim\cI_{k_2}}[\bM_2\hSigma_2]] \\
        &\asto \lim\limits_{p\rightarrow\infty}\tr[\bbeta_0\ba^{\top}(\bI_p - (v(-\lambda_1;\psi_1)\bSigma+\bI_p)^{-1})] - \tr[\bbeta_0\ba^{\top}(\bI_p - (v(-\lambda_2;\psi_2)\bSigma+\bI_p)^{-1})]\\
        &=0.
    \end{align*}

    For the second term, notice that by \Cref{lem:concen-linearform-uncorrelated},
    \begin{align*}
        \frac{1}{k_j}\ba^{\top}\bM_j\bX^{\top}\bL_{I_{k_j}}\bfNL = \ba^{\top}\frac{\bZ\bSigma^{\frac{1}{2}}}{k_j}\left(\frac{\bZ\bSigma\bZ^{\top}}{k_j}+\lambda_j\bI_p\right)^{-1} \bfNL \asto 0.
    \end{align*}
    From Lemma G.5 (2) of \citep{du2023gcv}, it follows that
    \begin{align*}
        \ba^{\top} \EE_{I_j\sim\cI_{k_j}}[\bM_j\bX^{\top}\bL_{I_j}]/k_j \bfNL \asto0,\qquad j=1,2,
    \end{align*}
    and $T_2 \asto 0$.

    Combining the above results and applying triangle inequality on \eqref{eq:thm:equiv-estimator-linear-eq-1} yields that
    \begin{align*}
        |\ba^{\top}(\hbeta_{k_1,\infty}^{\lambda_1} - \hbeta_{k_2,\infty}^{\lambda_2})| \leq |T_1|+|T_2|\asto 0,
    \end{align*}
    which completes the proof when $M=\infty$.
    When $M\in\NN$, replacing the above expectation by the average over $M$ simple random samples $I_1,I_2\sim\cI_k$ completes the proof.

    \item \underline{$\lambda_1\lambda_2=0$.} 

    When $\lambda_1=\lambda_2=0$, it is trivially true as $k_1=k_2$.
    Otherwise, without loss of generality, we assume $\lambda_1=0$ and $\lambda_2>0$.
    We use the same decomposition \eqref{eq:thm:equiv-estimator-linear-eq-1} and first analyze $T_1$.
    From part one we have that for $\lambda>0$, $P_{n,\lambda}-Q_{n,\lambda}\asto 0$ where
    \begin{align*}
        P_{n,\lambda}:&=\tr[\bbeta_0\ba^{\top}\EE_{I_1\sim\cI_{k_1}}[(\bX^{\top}\bL_{I_1}\bX/k_1+\lambda\bI_p)^+\hSigma_1]] ,\\
        Q_{n,\lambda}:&=\tr[\bbeta_0\ba^{\top}(\bI_p - (v(-\lambda;\psi_1)\bSigma+\bI_p)^{-1})].
    \end{align*}
    We next show that
    \begin{align}
        \lim_{n\rightarrow\infty}\lim_{\lambda\rightarrow0+}P_{n,\lambda} = \lim_{\lambda\rightarrow0+}\lim_{n\rightarrow\infty}P_{n,\lambda} = 0, \label{eq:thm:equiv-estimator-linear-eq-2}
    \end{align}
    by proving that the function $P_{n,\lambda}$ is equicontinuous family in $\lambda $ over $\Lambda=[0,\lambda_{\max}]$ for any $\lambda_{\max}\in(0,\infty)$ fixed.
    Note that for all $\lambda\in\Lambda$,
    \begin{align*}
        |P_{n,\lambda}| &\leq \EE_{I_1\sim\cI_{k_1}}[\|(\bX^{\top}\bL_{I_1}\bX/k_1+\lambda\bI_p)^+\hSigma_1\|_{\oper}]\|\bbeta_0\ba^{\top}\|_{\tr} \leq \|\bbeta_0\|_2^2\|\ba\|_2^2
    \end{align*}
    and its derivative
    \begin{align*}
        \left|\frac{\partial}{\partial\lambda}P_{n,\lambda}\right|
        &=\left|\EE_{I_1\sim\cI_{k_1}}[\tr[(\bX^{\top}\bL_{I_1}\bX/k_1+\lambda\bI_p)^{-2}\hSigma_1\bbeta_0\ba^{\top}]]\right|\\
        &\leq \EE_{I_1\sim\cI_{k_1}}[\|(\bX^{\top}\bL_{I_1}\bX/k_1+\lambda\bI_p)^{-2}\hSigma_1\|_{\oper}]\|\bbeta_0\ba^{\top}\|_{\tr}\\&
        \leq \|\bbeta_0\|_2^2\|\ba\|_2^2
    \end{align*}
    are uniformly bounded in $\lambda$ almost surely.
    In the above inequalities, the operator norm is bounded because $\|(\bX^{\top}\bL_{I_1}\bX/k_1+\lambda\bI_p)^{-1}\hSigma_1\|_{\oper}\leq s_i/(s_i+\lambda)\leq 1$ and $\|(\bX^{\top}\bL_{I_1}\bX/k_1+\lambda\bI_p)^{-2}\hSigma_1\|_{\oper}\leq s_i/(s_i+\lambda)^2\leq 1/(s_i+\lambda) \leq 1$,  by noting that $\limsup\|\hSigma_j\|_{\oper}\leq \limsup\max_{1\leq i\leq p} s_i^2\leq r_{\max}(1+\sqrt{\psi_1})^2$ and $\liminf\|\hSigma_j\|_{\oper}\geq \liminf\min_{1\leq i\leq p} s_i^2\geq r_{\min}(1-\sqrt{\psi_1})^2$ almost surely as $k_1,p\rightarrow\infty$ and $p/k_1\rightarrow\psi_1\in(0,\infty)\setminus\{1\}$ \citep{bai2010spectral}.
    On the other hand, since $Q_{n,\lambda}$ is a continuous function of $v(-\lambda;\psi_1)$, we have
    \begin{align*}
        |Q_{n,\lambda}| &\leq \|v(-\lambda_1;\psi_1)\bSigma(v(-\lambda_1;\psi_1)\bSigma+\bI_p)^{-1}\|_{\oper}\|\bbeta_0\ba^{\top}\|_{\tr} \leq \|\bbeta_0\|_2^2\|\ba\|_2^2\\
        \left|\frac{\partial}{\partial\lambda}Q_{n,\lambda}\right|&=\left|\tr[\bbeta_0\ba^{\top}(v(-\lambda;\psi_1)\bSigma+\bI_p)^{-2}\bSigma] \frac{\partial v(-\lambda;\psi_1)}{\partial \lambda}\right|\\
        &\leq \|\bbeta_0\ba^{\top}\|_{\oper} \left|\frac{\partial v(-\lambda;\psi_1)}{\partial \lambda}\right|\int \frac{r}{(1+v(-\lambda;\psi_1)r)^2}\rd H_p(r).
    \end{align*}
    When $\psi_1>1$,
    $\partial v(-\lambda;\psi_1)/\partial \lambda$ is bounded over $\Lambda$  \citep[Lemma E.10 and Lemma E.11]{du2023gcv} and thus $|\partial Q_{n,\lambda}/\partial \lambda|$ is also bounded almost surely.
    When $\psi_1<1$, from Lemma E.12 of \citep{du2023gcv} we have
    \begin{align*}
        \left|\frac{\partial v(-\lambda;\psi_1)}{\partial \lambda}\right|\int \frac{r}{(1+v(-\lambda;\psi_1)r)^2}\rd H_p(r) &= - \frac{\partial v(-\lambda;\psi_1)}{\partial \lambda}\int \frac{r}{(1+v(-\lambda;\psi_1)r)^2}\rd H_p(r)\\
        &=  \ddfrac{\int \frac{r}{(1+v(-\lambda;\psi_1)r)^2}\rd H_p(r)}{\frac{1}{v(-\lambda;\psi_1)^2}-\psi_1\int \frac{r^2}{(1+v(-\lambda;\psi_1)r)^2}\rd H_p(r)}\\
        &\leq \frac{1}{1 -\psi_1},
    \end{align*}
    since $v(-\lambda;\psi_1)\leq v(0;\psi_1) =+\infty$.
    Therefore, $|\partial Q_{n,\lambda}/\partial \lambda|$ is uniformly bounded over $\Lambda$ for $\psi_1\in(0,\infty)\setminus\{1\}$.    
    Thus, by the Moore-Osgood theorem, the convergence is uniform in $\lambda$ and \eqref{eq:thm:equiv-estimator-linear-eq-2} follows.
    Finally, since $T_2$ is bounded analogously as in Part (1), the equivalence holds when $\lambda_1=0$.
    \end{enumerate}

\subsection{Proof of \Cref{prop:data-path}}
\label{subsec:proof:prop:data-path}    
    From \Cref{lem:v-vhat}, it follows that
    \begin{align}
        v(-\lambda;\phi_n)  &\asympequi \frac{1}{n} 
        \tr\left[\left(\frac{1}{n} \bX \bX^\top + \lambda I_p\right)^{-1}\right]. \label{eq:prop:data-line-eq-2}
    \end{align}
    By the continuity of function $\phi\mapsto v(-\lambda;\phi)$ from Lemma F.10 and F.11 of \citep{du2023gcv}, we have $v(-\lambda;\phi_n) \asympequi v(-\lambda;\phi)$ as $\phi_n=p/n\rightarrow\phi$.
    From \Cref{lem:v-equiv-path}, there exists $\bar{\lambda}_n$ such that
    $$v(0;\psi_n)  = v(-\bar{\lambda}_n;\phi_n),$$
    as $\psi_n\rightarrow\bar{\psi}$ and $\phi_n\rightarrow\phi$.
    Involving \eqref{eq:prop:data-line-eq-2} on the both sides yields that
    \begin{align*}
        \frac{1}{n} 
        \tr\left[\left(\frac{1}{n} \bX \bX^\top + \bar{\lambda}_n I_p\right)^{-1}\right] &\asympequi
        \frac{1}{k} 
        \tr\left[\left(\frac{1}{k} \bX\bL_{I_1} \bX^\top\right)^{+}\right] \asympequi \frac{1}{Mk} \sum_{\ell=1}^M
        \tr\left[\left(\frac{1}{k} \bX\bL_{I_{\ell}} \bX^\top\right)^{+}\right]
    \end{align*}
    where $\{I_1,\ldots,I_M\}$ is a simple random sample from $\cI_k$.

\subsection{Proof of \Cref{cor:gen-ridge}}

We will use a structural equivalence 
(much more direct than the first-order equivalence
considered in the paper)
between generalized ridge predictor
and the isotropic ridge predictor to prove the result.

The generalized ridge predictor
\eqref{eq:generalized-ridge-base},
trained on the subsampled dataset $\cD_I$, 
can be expressed as:
\[
    \hbeta_{k}^{\lambda, \bG}(\cD_I)
    = 
    \left(\frac{1}{k} \bX^\top \bL_I \bX + \lambda \bG \right)^{-1}
    \frac{\bX^\top \bL_I \by}{k}.
\]
Observe that we can equivalently manipulate the generalized ridge estimator into:
\[
    \hbeta_{k}^{\lambda, \bG}(\cD_I)
    = 
    \bG^{-1/2}
    \left(\frac{1}{k} 
    \bG^{-1/2} \bX^\top \bL_I \bX \bG^{-1/2} + \lambda \bI_p \right)^{-1}
    \bG^{-1/2}
    \frac{\bX^\top \bL_I \by}{k}.
\]
Recalling from \Cref{asm:feat} that $\bX = \bZ \bSigma^{1/2}$,
where $\bZ \in \RR^{n \times p}$ contains $\bz_i^\top$ in the $i$-th row for $i \in [n]$, we obtain
\begin{equation}
    \label{eq:generalized-ridge-expression-with-G}
    \hbeta_{k}^{\lambda, \bG}(\cD_I)
    =
    \bG^{-1/2}
    \left(
        \frac{1}{k}
        \bG^{-1/2}
        \bSigma^{1/2}
        \bZ^\top
        \bL_I
        \bZ
        \bSigma^{1/2}
        \bG^{-1/2}
        + \lambda \bI_p
    \right)^{-1}
    \frac{\bG^{-1/2} \bSigma^{1/2} \bZ^\top \bL_I \by}{k}.
\end{equation}
We now define $\bSigma_{\bG} = \bG^{-1/2} \bSigma \bG^{-1/2}$
and consider a transformed feature matrix 
$\bX_{\bG} = \bZ \bSigma_{\bG}^{1/2}$.
Denote the transformed dataset corresponding to the new feature matrix
by $\cD^{\bG}$ (where we keep the response vector as is).
Using \eqref{eq:generalized-ridge-expression-with-G},
we can write relate the generalized ridge estimator fitted
on the dataset $\cD$ to the standard ridge estimator fitted
on the data $\cD^{\bG}$
(both at the same scalar regularization level $\lambda$) by:
\[
    \hbeta_{k}^{\lambda, \bG}(\cD_I)
    = \bG^{-1/2} \hbeta_{k}^{\lambda}(\cD_I^{\bG}).
\]

Observe that the transformed dataset $\cD^{\bG}$ satisfies
\Cref{asm:feat} as the eigenvalues of $\bG$ are bounded away from $0$ and $\infty$.
Further, note that both \Cref{thm:equiv-risk} and \Cref{thm:equiv-estimator-linear} are invariant to linear transformations of the estimator.
Thus, we conclude that the results of both the theorems continue to hold. The equivalence path now use the modified $\bSigma_{\bG}$, 
which in turn changes the spectral distribution $H_p$ to that of $\bSigma_{\bG}$, in defining the end points via \eqref{eq:lambda_bar}, given by:
\[
    \tilde{H}_p
    = \frac{1}{p} \sum_{i=1}^{p} \ind_{\{ \tilde{r}_i \le r \}},
\]
where $\tilde{r}_i$ for $i \in [p]$ are eigenvalues of $\bSigma_{\bG}$.
This completes the proof.

\subsection{Technical lemmas}\label{subsec:proof:structural-equivalence:lemmas}
\begin{lemma}[Relationship between $\hat{m}$ and $\hat{v}$]\label{lem:mhat-vhat}
    For $\bX\in\RR^{n\times p}$ and $\phi_n=p/n$, define
    \[
        \hat{m}(z;\phi_n)
        =
        \frac{1}{p}
        \tr\left[\left(\frac{1}{n} \bX^\top \bX - z I_p\right)^{-1}\right],
    \quad
    \text{and}
    \quad
        \hat{v}(z;\phi_n)
        =
        \frac{1}{n} 
        \tr\left[\left(\frac{1}{n} \bX \bX^\top -z  I_p\right)^{-1}\right].
    \]   
    It holds that
    \begin{align}
        \label{eq:hatv-hatm-relationship}
        \phi_n z \hat{m}(z;\phi_n) + \phi_n - 1
        = z \hat{v}(z;\phi_n).
    \end{align}
\end{lemma}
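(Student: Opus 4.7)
The plan is to exploit the classical fact that the $p \times p$ matrix $\bX^\top \bX / n$ and the $n \times n$ matrix $\bX \bX^\top / n$ share the same nonzero eigenvalues, with multiplicities, and differ only in the dimension of their nullspaces. From this spectral correspondence, the desired identity will follow by a short bookkeeping computation on traces.

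Concretely, let $r = \mathrm{rank}(\bX) \le \min(n,p)$, and let $\mu_1, \ldots, \mu_r$ denote the common nonzero eigenvalues of $\bX^\top \bX / n$ and $\bX \bX^\top / n$. Then
\[
\tr\!\left[\left(\tfrac{1}{n}\bX^\top\bX - z I_p\right)^{-1}\right]
= \sum_{i=1}^{r}\frac{1}{\mu_i - z} + \frac{p - r}{-z},
\qquad
\tr\!\left[\left(\tfrac{1}{n}\bX\bX^\top - z I_n\right)^{-1}\right]
= \sum_{i=1}^{r}\frac{1}{\mu_i - z} + \frac{n - r}{-z}.
\]
Subtracting yields
\[
\tr\!\left[\left(\tfrac{1}{n}\bX^\top\bX - z I_p\right)^{-1}\right]
- \tr\!\left[\left(\tfrac{1}{n}\bX\bX^\top - z I_n\right)^{-1}\right]
= \frac{n - p}{z},
\]
which, after dividing through by $n$ and using $\phi_n = p/n$, gives $\phi_n \hat m(z;\phi_n) - \hat v(z;\phi_n) = (1-\phi_n)/z$. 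Multiplying by $z$ and rearranging produces exactly \eqref{eq:hatv-hatm-relationship}.

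There is essentially no technical obstacle: the identity is a purely algebraic consequence of the singular value decomposition of $\bX$, and it holds deterministically for every realization of $\bX$ (so no probabilistic or asymptotic argument is needed). The only mild care required is to note that $z$ should be chosen so that the two inverses exist, e.g. $z \in \CC \setminus \{0\}$ avoiding the spectra, which matches the setting in which $\hat m$ and $\hat v$ are defined. If one prefers to avoid splitting into cases based on $\mathrm{rank}(\bX)$, an equivalent one-line derivation uses the push-through identity $(\bX^\top \bX - z n I_p)^{-1} \bX^\top = \bX^\top (\bX \bX^\top - z n I_n)^{-1}$ together with $\tr[I_p] - \tr[I_n] = p - n$, but the eigenvalue bookkeeping above is the most transparent route.
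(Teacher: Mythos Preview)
Your proof is correct and follows essentially the same approach as the paper's: both arguments introduce the rank $r$ of $\bX$, use that $\bX^\top\bX/n$ and $\bX\bX^\top/n$ share the same nonzero eigenvalues, write the two traces as a common sum over $r$ terms plus a contribution from the respective nullspaces, and reduce the identity to elementary bookkeeping. The only cosmetic difference is that you subtract the two traces first and then rearrange, whereas the paper expands the left-hand side of \eqref{eq:hatv-hatm-relationship} directly; the computations are otherwise identical.
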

    \begin{proof}
        Let $r$ be the rank of the matrix $\bX$.
        Denote by $s_i$, $i = 1, \dots, r$, the non-zero eigenvalues of 
        the matrix $\bX^\top \bX$.
        Note that these are the same non-zero eigenvalues of 
        the matrix $\bX \bX^\top$.
        Define function $S$ such that
        for $z \neq 0$,
        \[
            S(z) = \sum_{i=1}^{r} \frac{1}{s_i - z}.
        \]
        For $z \neq 0$,
        write out $\hat{v}$ and $\hat{m}$
        in terms of $S(z)$ as:
        \begin{align*}
            \hat{v}(z; \phi_n) 
            &= 
            \frac{1}{n}
            \sum_{i = 1}^{r}
            \frac{1}{s_i - z}
            - \frac{1}{n} \frac{n-r}{z}
            = \frac{1}{n} S(z)
             - \frac{1}{n} \frac{n - r}{z} \\
            \hat{m}(z; \phi_n)
            &=
            \frac{1}{p}
            \sum_{i=1}^{r}
            \frac{1}{s_i - z}
            - \frac{1}{p}
            \frac{p - r}{z}
            =
            \frac{1}{p} S(z)
            - \frac{1}{p}
            \frac{p - r}{z}.
        \end{align*}
        We now expand the left-hand side of \eqref{eq:hatv-hatm-relationship}:
        \begin{align*}
            \phi_n z \hat{m}(z; \phi_n)
            + \phi_n - 1
            &=
            \frac{p}{n}
            z
            \left(
                \frac{1}{p}
                S(z)
                - \frac{1}{p}
                \frac{p - r}{z}
            \right) 
            + \frac{p}{n} - 1
            \\
            &=
            \frac{1}{n}
            z S(z)
            - 
            \frac{p - r}{n}
            + \frac{p}{n} - 1 \\
            &= 
            \frac{1}{n}
            z S(z) 
            - \frac{n - r}{n}
            + \frac{n - r}{n}
            - \frac{p - r}{n}
            + \frac{p}{n} - 1
            \\
            &= z\hat{v}(z;\phi_n) + \frac{n - p}{n} + \frac{p}{n} - 1 \\
            &= z\hat{v}(z;\phi_n),
        \end{align*}
        which finishes the proof.
    \end{proof}
    
\begin{lemma}[Equivalence of $\hat{v}$ and $v$]\label{lem:v-vhat}
Suppose \Crefrange{asm:model}{asm:feat} hold. Then it holds that
    \begin{align*}
        v(-\lambda;\phi_n)  &\asympequi \frac{1}{n} 
        \tr\left[\left(\frac{1}{n} \bX \bX^\top + \lambda I_p\right)^{-1}\right].
    \end{align*}
\end{lemma}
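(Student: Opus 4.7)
The plan is to route the claim through the companion Stieltjes transform $\hat{m}(z;\phi_n)$ introduced in \Cref{lem:mhat-vhat}, and reduce it to the classical ridge resolvent deterministic equivalent together with the fixed-point equation defining $v(-\lambda;\phi_n)$.

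First, I would invoke the basic anisotropic ridge resolvent equivalence catalogued in \Cref{sec:known_lemmas} (a classical consequence of the anisotropic Marchenko--Pastur theory under \Cref{asm:feat}), which says
\[
    \lambda\Bigl(\tfrac{1}{n}\bX^\top\bX + \lambda \bI_p\Bigr)^{-1} \;\asympequi\; \bigl(v(-\lambda;\phi_n)\bSigma + \bI_p\bigr)^{-1}.
\]
Applying the trace property of asymptotic equivalents (\Cref{lem:calculus-detequi}) to the deterministic matrix $\bI_p/p$, this yields the scalar equivalence
\[
    \lambda\,\hat{m}(-\lambda;\phi_n) \;\asympequi\; \int \frac{1}{1+v(-\lambda;\phi_n)\,r}\,\rd H_p(r).
\]

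Second, I would apply the exact identity from \Cref{lem:mhat-vhat} with $z=-\lambda$, namely $\lambda\,\hat{v}(-\lambda;\phi_n) = \phi_n\,\lambda\,\hat{m}(-\lambda;\phi_n) - (\phi_n-1)$, and substitute the equivalent from the previous step to obtain
\[
    \hat{v}(-\lambda;\phi_n) \;\asympequi\; \frac{1}{\lambda}\bigg[\phi_n \int \frac{1}{1+v(-\lambda;\phi_n)\,r}\,\rd H_p(r) \;-\; \phi_n + 1\bigg].
\]

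Third, I would collapse the right-hand side using the defining fixed-point equation $v^{-1} = \lambda + \phi_n \int r/(1+vr)\,\rd H_p(r)$ for $v := v(-\lambda;\phi_n)$. The trick is the algebraic identity
\[
    \phi_n \int \frac{1}{1+vr}\,\rd H_p(r) \;=\; \phi_n - v \cdot \phi_n \int \frac{r}{1+vr}\,\rd H_p(r) \;=\; \phi_n - v\bigl(v^{-1}-\lambda\bigr) \;=\; \phi_n - 1 + \lambda v,
\]
so the bracketed quantity above equals $\lambda v$, and dividing by $\lambda$ gives $\hat{v}(-\lambda;\phi_n) \asympequi v(-\lambda;\phi_n)$, as claimed.

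The main obstacle is essentially bookkeeping rather than technical novelty: one has to check that the underlying ridge resolvent equivalent holds in the full generality of \Cref{asm:feat} (anisotropic features with only bounded $4+\nu$ moments and without assuming the spectral distribution $H_p$ converges), and that the scalar asymptotic equivalence is preserved under the bounded linear combinations produced by the trace reduction and by \Cref{lem:mhat-vhat}. Both are addressed by the general equivalents collected in \Cref{sec:known_lemmas} extended via the tools from \cite{bartlett_montanari_rakhlin_2021}, so the argument requires no further probabilistic input beyond the classical ridge resolvent equivalent.
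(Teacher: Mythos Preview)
Your proposal is correct and follows essentially the same route as the paper's proof: both combine the basic ridge resolvent equivalent $\lambda(\hSigma+\lambda\bI_p)^{-1}\asympequi(v\bSigma+\bI_p)^{-1}$ (traced to a scalar), the exact $\hat m$--$\hat v$ identity of \Cref{lem:mhat-vhat}, and the fixed-point equation for $v$, differing only in the order in which the algebra is performed. No changes are needed.
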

\begin{proof}
     From \Cref{lem:mhat-vhat}, we have
    \begin{align*}
        z \hat{v}(z;\phi_n) + (1 - \phi_n) = \phi_n z \hat{m}(z;\phi_n) .
    \end{align*}
    Substituting $z=-\lambda$ yields that
    \begin{align}
        \lambda
        \frac{1}{n} 
        \tr\left[\left(\frac{1}{n} \bX \bX^\top + \lambda I_p\right)^{-1}\right]
        + (\phi_n - 1)  
        =
        \phi_n \lambda
        \frac{1}{p}
        \tr\left[\left(\frac{1}{n} \bX^\top \bX + \lambda I_p\right)^{-1}\right].
        \label{eq:prop:data-line-eq-1}
    \end{align}

    From \Cref{cor:asympequi-scaled-ridge-resolvent}, we have    
    \begin{align*}
        \frac{1}{v(-\lambda;\phi_n) }
        &= \lambda + \phi \tr[\Sigma (v(-\lambda;\phi_n) \Sigma + I_p)^{-1}] / p ,\\
        \frac{1}{p}
        \tr[(v(-\lambda;\phi_n)\Sigma + I_p)^{-1}] 
        &\asympequi \frac{1}{p}
        \lambda
        \tr\left[\left(\frac{1}{n} \bX^\top \bX + \lambda I_p\right)^{-1}\right].    
    \end{align*}
    This implies
    \begin{align*}
        1 
        &= \lambda v(-\lambda;\phi_n) + \phi_n \tr[v(-\lambda;\phi_n) \Sigma (v(-\lambda;\phi_n) \Sigma + I_p)^{-1}] / p \\
        &= \lambda v(-\lambda;\phi_n) + \phi_n - \phi_n \tr[(v(-\lambda;\phi_n) \Sigma + I_p)^{-1}])/p\\
        &\asympequi
        \lambda v(-\lambda;\phi_n) + \phi_n - \phi_n 
        \lambda
        \tr\left[\left(\frac{1}{n} \bX^\top \bX + \lambda I_p\right)^{-1}\right]/p \\
        &= 1 + \lambda v(-\lambda;\phi_n) 
        - \frac{1}{n} \lambda 
        \tr\left[\left(\frac{1}{n} \bX \bX^\top + \lambda I_p\right)^{-1}\right],
    \end{align*}
    where the last equality follows from \eqref{eq:prop:data-line-eq-1}.
    This concludes the proof.
\end{proof}

\section{Proof of results in \Cref{sec:implication}}
\label{sec:proof:prop:monotonicity}

\subsection{Proof of \Cref{prop:monotonicty}}
        By \Cref{ass:energy}, $R_p(\lambda;\phi,\psi)$, as defined in \eqref{eq:R_p}, converges to
        \begin{align}
            R_p(\lambda;\phi,\psi) &\asto \rho^2 \tc(-\lambda;\phi,\psi)  + \sigma^2\tv(-\lambda;\phi,\psi) =: \sR(\lambda;\phi,\psi) \label{eq:prop:monotonicty-eq-1}
        \end{align}
        where the nonnegative constants $\tc(-\lambda;\phi,\psi)$ and $\tv(-\lambda;\phi,\psi)$ are defined through the following equations:
        \begin{align*}            
            \tv(-\lambda;\phi,\psi) &= \ddfrac{\phi \int\frac{r^2}{(1+v(-\lambda;\psi)r)^2}\rd H(r)}{v(-\lambda;\psi)^{-2}-\phi \int\frac{r^2}{(1+v(-\lambda;\psi)r)^2}\rd H(r)}, \\
            \tc(-\lambda;\phi,\psi) &= (\tv(-\lambda;\phi,\psi) + 1) \int\frac{r}{1+v(-\lambda;\psi) r}\rd G(r), \\
            \frac{1}{v(-\lambda;\psi)} &= \lambda + \psi \int \frac{r}{1 + v(-\lambda;\psi) r}\rd H(r).
        \end{align*}

        From the proof of \Cref{thm:equiv-risk}, we also have that $\sR(\lambda;\phi,\psi)\asympequi R_p(\lambda;\phi,\psi) \asympequi R(\hbeta_{k,\infty}^{\lambda};\bSigma, \zero, \bbeta_0)$.
        
        Since $\sR(0;\phi,\psi)$ is a continuous function of $\phi$ and $v(0;\psi)$ and is increasing in $\phi$ for any fixed $\psi$, it follows that for $0<\phi_1\leq \phi_2<\infty$,
        \begin{align*}
            \min_{\psi \ge \phi_1}
            \sR(0;\phi_1,\psi)
            &\le
            \min_{\psi \ge \phi_2}
            \sR(0;\phi_1,\psi) \le
            \min_{\psi \ge \phi_2}
            \sR(0;\phi_2,\psi),
        \end{align*}
        where the first inequality follows because $\{ \psi: \psi \ge \phi_1 \} \supseteq \{ \psi : \psi \ge \phi_2 \}$,
        and the second inequality follows because $\sR(0;\phi,\psi)$ is increasing in $\phi$ for a fixed $\psi$.
        Thus, $\min_{\psi \ge \phi}\sR(0;\phi,\psi)$ is a continuous and monotonically increasing function in $\phi$.

        Finally, note that from \Cref{lem:v-equiv-path}, for any $\lambda$, there exists $\psi$ such that $v(0;\phi,\psi) = v(-\lambda;\phi,\phi)$.
        This implies that $\sR(0;\phi,\psi) = \sR(\lambda;\phi,\phi)$.
        Then we have $\min_{\psi\geq\phi}\sR(0;\phi,\psi) \leq \min_{\lambda\geq 0}\sR(\lambda;\phi,\phi)$.
        Conversely, since for any $\psi$, there exists $\lambda$ such that $v(0;\phi,\psi) = v(-\lambda;\phi,\phi)$, we also have $\min_{\psi\geq\phi}\sR(0;\phi,\psi) \geq \min_{\lambda\geq 0}\sR(\lambda;\phi,\phi)$.
        Combining the two inequalities yields the conclusion.

\subsection{Technical lemmas}

In this section, we gather results on certain analytic properties of the fixed-point solution $v(-\lambda;\phi)$ defined in \eqref{eq:basic-ridge-equivalence-v-fixed-point}.

\begin{lemma}[Contour of fixed-point solutions]\label{lem:v-equiv-path}
    As $n,p\rightarrow\infty$ such that $p/n\rightarrow\phi\in(0,\infty)$, for any $\bar{\psi}\in[\phi,+\infty]$, there exists a unique value $\bar{\lambda} \geq 0$ (or conversely for $\bar{\lambda}\in[0,\infty]$, there exists a unique value $\bar{\psi}\in[\phi\vee 1,\infty]$) such that for all $(\lambda, \psi)$ on the path
    \begin{align*}
        \cP=\{(1 - \theta)\cdot(\bar{\lambda}, \phi) + \theta\cdot(0, \bar{\psi})\mid \theta\in[0, 1]\},
    \end{align*}
    it holds that
    \begin{align*}
        v_p(-\lambda;\psi) = v(-\overline{\lambda}; \phi) = v(-0; \bar{\psi}).
    \end{align*}
    where $v_p(-\lambda;\psi)$ is as defined in \eqref{eq:basic-ridge-equivalence-v-fixed-point}.
\end{lemma}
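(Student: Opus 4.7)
\textbf{Proof plan for \Cref{lem:v-equiv-path}.} The key observation is that the fixed-point equation \eqref{eq:basic-ridge-equivalence-v-fixed-point} (equivalently the first equation of \eqref{eq:lambda_bar}) can be rewritten as
\[
\lambda + \psi \cdot I_p(v) \;=\; \frac{1}{v},
\qquad \text{where } I_p(v) \;:=\; \int \frac{r}{1 + v\,r}\,\rd H_p(r).
\]
For \emph{fixed} $v = v^{*} \ge 0$, this is a \emph{linear} equation in the pair $(\lambda,\psi)$. Consequently, the level set $\{(\lambda,\psi)\in[0,\infty)^{2} : v_p(-\lambda;\psi) = v^{*}\}$ is (the intersection of the first quadrant with) the straight line $\lambda + I_p(v^{*})\,\psi = 1/v^{*}$. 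Because $I_p$ is strictly decreasing in $v$ on $(0,\infty)$ (with $I_p(0^{+}) = \int r\,\rd H_p(r) \le r_{\max}$ and $I_p(\infty) = 0$) and $1/v$ is strictly decreasing, different $v^{*}$'s yield distinct parallel-free lines, so these level lines foliate the admissible region of the $(\lambda,\psi)$-plane.

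Given this structural fact, the lemma reduces to choosing a single $v^{*}$ that makes the two endpoints $(0,\bar\psi)$ and $(\bar\lambda,\phi)$ lie on the same level line. Starting from $\bar\psi \in [\phi\vee 1,\infty]$, let $v^{*}$ be the unique solution (furnished by \Cref{cor:asympequi-scaled-ridge-resolvent}) of $1/v^{*} = \bar\psi\, I_p(v^{*})$, i.e.\ of the second equation in \eqref{eq:lambda_bar}. Define
\[
\bar\lambda \;=\; (\bar\psi - \phi)\,I_p(v^{*}) \;\ge\; 0.
\]
Then $\bar\lambda + \phi\,I_p(v^{*}) = \bar\psi\, I_p(v^{*}) = 1/v^{*}$, so $(\bar\lambda,\phi)$ lies on the same level line as $(0,\bar\psi)$. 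Because the line is convex, every point
$(1-\theta)(\bar\lambda,\phi) + \theta(0,\bar\psi) = \bigl((1-\theta)\bar\lambda,\,(1-\theta)\phi + \theta\bar\psi\bigr)$
also satisfies $\lambda + I_p(v^{*})\,\psi = 1/v^{*}$, so $v_p(-\lambda;\psi) = v^{*}$ throughout $\cP$, which is exactly the claim.

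For the converse direction, I would use the monotonicity of the fixed-point solution (as in Lemma E.10--E.12 of \citep{du2023gcv}): $\bar\lambda \mapsto v(-\bar\lambda;\phi)$ is continuous and strictly decreasing from $v(0;\phi) \in (0,\infty]$ down to $0$, while $\bar\psi \mapsto v(0;\bar\psi)$ is continuous and strictly decreasing from $\infty$ to $v(0;\phi)$ on $[\phi\vee 1,\infty]$. Matching the common value $v^{*}$ therefore yields a unique $\bar\psi$ for each $\bar\lambda \in [0,\infty]$, and vice versa. The anticipated technical nuisance is the treatment of boundary cases: $\bar\psi = \phi$ (giving $\bar\lambda = 0$ trivially), $\bar\psi = \infty$ (where $v^{*} \to 0$ and $I_p(v^{*}) \to \int r\,\rd H_p(r)$, so $\bar\lambda \to \infty$), and the ridgeless endpoint when $\bar\psi < 1$ at $\phi < 1$, where one must interpret $v(0;\bar\psi) = \infty$ and invoke the extended-real version of the fixed-point correspondence guaranteed by \Cref{cor:asympequi-scaled-ridge-resolvent}. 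Beyond these edge cases, the proof is essentially the one-line observation that the defining equation is linear in $(\lambda,\psi)$ once $v$ is fixed.
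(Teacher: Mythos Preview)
Your proposal is correct and follows essentially the same approach as the paper: both arguments hinge on the observation that for fixed $v$ the defining equation $1/v = \lambda + \psi\,I_p(v)$ is linear in $(\lambda,\psi)$, so convex combinations of the two endpoints stay on the same level set, and both invoke the monotonicity results of Lemmas E.10--E.12 in \citep{du2023gcv} to match the endpoints and handle the boundary cases $\bar\psi\le 1$ and $\bar\psi=\infty$. Your explicit formula $\bar\lambda=(\bar\psi-\phi)I_p(v^{*})$ is a nice touch, but it is equivalent to the paper's intermediate-value-theorem construction.
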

\begin{proof}
    Since when $\phi<1$, $v(0;\psi)=+\infty$  for all $\psi\in[\phi,1]$, we can restrict ourselves to $\psi\in[\phi\vee 1,+\infty]$.
    From Lemma E.11 (1) of \citep{du2023gcv}, the function $\psi\mapsto v(0;\psi)$ is strictly decreasing over $\psi\in[\phi\vee 1,\infty]$ with range
    \begin{align*}
        v(0;\phi\vee 1)=\begin{cases}  
        v(0;\phi),&\phi\in(1,\infty)\\
        \lim_{\psi\rightarrow1^+}v(0;\psi)=+\infty,&\phi\in(0,1]
        \end{cases},\qquad v(0;+\infty):=\lim_{\psi\rightarrow+\infty}v(0;\psi)=0.
    \end{align*}
    From Lemma E.12 (3) of \citep{du2023gcv}, the function $\lambda\mapsto v(-\lambda;\phi)$ is strictly decreasing over $\lambda\in[0,\infty]$ with range
    \begin{align*}
        v(0;\phi)=\begin{cases}  
        v(0;\phi),&\phi\in(1,\infty)\\
        \lim_{\lambda\rightarrow0^+}v(-\lambda;\phi)=+\infty,&\phi\in(0,1]
        \end{cases}
        ,\qquad v(-\infty;\phi):=\lim_{\lambda\rightarrow+\infty}v(-\lambda;\phi)=0.
    \end{align*}
    Note that $ v(0;\phi\vee 1)=v(0;\phi)$.
    For $\bar{\psi}\in[\phi\vee 1,\infty]$, by the intermediate value theorem, there exists unique $\bar{\lambda}\in[0,\infty]$ such that $v(-\bar{\lambda};\phi)=v(0;\bar{\psi})$.
    Furthermore, when $\bar{\psi}\leq 1$, $\bar{\lambda}=0$ is also the unique value such that $v(0;\bar{\psi})=v(-\bar{\lambda};\phi)$.
    Conversely, for $\bar{\lambda}\in[0,\infty]$, there also exists $\bar{\psi}\in[\phi\vee 1,\infty]$ such that $v(-\bar{\lambda};\phi)=v(0;\bar{\psi})$.

    Based on the definition of fixed-point solutions, it follows that
    \begin{align*}
        \frac{1}{v(-\bar{\lambda};\phi)} &= \bar{\lambda} + \phi\int\frac{r}{1 + v(-\bar{\lambda};\phi) r}\rd H_p(r) = \bar{\psi}\int\frac{r}{1 + v(0;\bar{\psi}) r}\rd H_p(r) = \frac{1}{v(0;\bar{\psi})} .
    \end{align*}
    Then, for any $(\lambda, \psi) = (1-\theta)(\bar{\lambda},\phi) + \theta (0,\bar{\psi})$ on the path $\cP$, we have
    \begin{align*}
        \frac{1}{v(-\bar{\lambda};\phi)} &= (1-\theta)\frac{1}{v(-\bar{\lambda};\phi)} + \theta\frac{1}{v(0;\bar{\psi})} \\
        &= (1-\theta) \bar{\lambda} + (1-\theta)\phi\int\frac{r}{1 + v(-\bar{\lambda};\phi) r}\rd H_p(r) + \theta\bar{\psi}\int\frac{r}{1 + v(0;\bar{\psi}) r}\rd H_p(r)\\
        &= \lambda + \psi\int\frac{r}{1 + v(-\bar{\lambda};\phi) r}\rd H_p(r).
    \end{align*}
    Because $v_p(-\lambda;\psi)$ is the unique solution to the fixed-point equation:    
    \begin{align*}
        \frac{1}{v_p(-\lambda;\psi)} &= \lambda + \psi \int \frac{r}{1+v_p(-\lambda;\psi)} \rd H_p(r),
    \end{align*}
    it then follows that $v_p(-\lambda;\psi)=v(-\bar{\lambda};\phi) = v(0;\bar{\psi})$.
\end{proof}

\section{Asymptotic equivalents, concentration results, and other useful lemmas}\label{sec:proof:lemma}

\subsection{Full-ensemble resolvents}\label{subsubsec:resolvent-full}    
\begin{lemma}[Full-ensemble resolvents]\label{lem:det-equiv-full}
    Let $\hSigma=\bX^{\top} \bX / n$, $\hSigma_j=\bX^{\top} \bL_{I_j} \bX / k$ where $I_j\sim\cI_k$. 
    Let $\bSigma_{1\cap 2} = \bX^{\top} \bL_{I_1\cap I_2} \bX / |I_1\cap I_2|$ and $\bC\in\RR^{p\times p}$ with bounded operator norm almost surely.
    As $n,p,k\rightarrow\infty$, $p/n\rightarrow\phi$, $p/k\rightarrow\psi$,
    the following asymptotic equivalences hold:
    \begin{enumerate}[(1),leftmargin=7mm]
        \item\label{item:lem:det-equiv-ridge} Basic ridge resolvent:
        \begin{align*}
            \lambda\EE_{I_1\sim\cI_k}[(\hSigma_1 + \lambda\bI_p)^{-1}] \asympequi  (v_p(-\lambda;\psi)\bSigma+\bI_p)^{-1}.
        \end{align*}
        
        \item\label{item:lem:det-equiv-full-bias} Bias resolvent with $\bC\indep\bX$:
        \begin{align*}
            \EE_{I_1,I_2\sim\cI_k}[\lambda^2(\hSigma_1 + \lambda\bI_p)^{-1}\bC(\hSigma_2 + \lambda\bI_p)^{-1}] \asympequi  \left(v_p(-\lambda;\psi) \bSigma + \bI_p\right)^{-1} (\tv_p(-\lambda;\phi,\psi,\bC)\bSigma+\bC)\left(v_p(-\lambda;\psi) \bSigma + \bI_p\right)^{-1}.
        \end{align*}

        \item\label{item:lem:det-equiv-full-var} Variance resolvent with $\bC\indep\bX$:
        \begin{align*}
            \EE_{I_1,I_2\sim\cI_k}\left[(\hSigma_1 + \lambda\bI_p)^{-1}\hSigma_{1\cap 2}(\hSigma_2 + \lambda\bI_p)^{-1}\bC\right] \asympequi \phi^{-1}\tv_v(-\lambda;\phi,\psi)( v_p(-\lambda;\psi) \bSigma + \bI_p)^{-2}\bSigma\bC.
        \end{align*}

        \item\label{item:lem:det-equiv-full-bias-X} Bias resolvent with $\bC=\hSigma$:
        \begin{align*}
            &\EE_{I_1,I_2\sim\cI_k}\left[\lambda^2(\hSigma_1 + \lambda\bI_p)^{-1}\hSigma(\hSigma_2 + \lambda\bI_p)^{-1}\right] \asympequi \tilde{d}(-\lambda;\phi,\psi) (1+\tv_p(-\lambda;\phi,\psi)) (v_p(-\lambda;\psi)\bSigma+\bI_p)^{-2}\bSigma.
        \end{align*}
    \end{enumerate}
    Here $v(-\lambda;\phi)$ is as defined in \eqref{eq:def-v-ridge}, and we let $\tv_p(-\lambda;\phi,\psi) = \tv_p(-\lambda;\phi,\psi,\bSigma)$,
    \begin{align*}
        \tv_p(-\lambda;\phi,\psi,\bC) &=\ddfrac{\lim\limits_{k,n,p} \phi \tr[\bC\bSigma(v_p(-\lambda;\psi)\bSigma+\bI_p)^{-2}]/p}{v_p(-\lambda;\psi)^{-2}-\phi \int\frac{r^2}{(1+v_p(-\lambda;\psi)r)^2}\rd H_p(r)},\\
        \tv_v(-\lambda;\phi,\psi) &= \ddfrac{1}{v_p(-\lambda;\psi)^{-2}-\phi \int\frac{r^2}{(1+ v_p(-\lambda;\psi)r)^2}\,\rd H_p(r)}, \\
        \tilde{d}(-\lambda;\phi,\psi) &= \left(1 - \phi\int\frac{v_p(-\lambda;\psi)r}{1+v_p(-\lambda;\psi)r}\rd H_p(r)\right)^2.
    \end{align*}
    The empirical distribution $H_n$ of eigenvalues can be replaced by the limiting distribution $H$ whenever it exists.
\end{lemma}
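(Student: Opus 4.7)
The four equivalents will be deduced from the single-subsample anisotropic ridge resolvent together with the calculus of deterministic equivalents (\Cref{lem:calculus-detequi}). For the basic ridge resolvent, I would apply the standard (non-ensemble) anisotropic ridge resolvent equivalent to the subsampled design $\bL_{I_1}\bX$, which has effective aspect ratio $p/k\to\psi$, yielding $\lambda(\hSigma_1+\lambda\bI_p)^{-1}\asympequi(v_p(-\lambda;\psi)\bSigma+\bI_p)^{-1}$ for each fixed $I_1$. Since the right-hand side does not depend on $I_1$, the $\EE_{I_1\sim\cI_k}$ average passes through the equivalence by linearity of the trace functional defining asymptotic equivalence.

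For the bias and variance resolvents with $\bC\indep\bX$, the key idea is to condition on the overlap $I_1\cap I_2$. Given the pair $(I_1,I_2)$ and writing $i_0=|I_1\cap I_2|$, the row-blocks $\bL_{I_1\setminus I_2}\bX$, $\bL_{I_2\setminus I_1}\bX$, and $\bL_{I_1\cap I_2}\bX$ are mutually independent. I would use a leave-out style conditional deterministic equivalent (in the spirit of Lemma~S.7.10 of \citep{patil2022bagging}) to replace each outer resolvent $(\hSigma_j+\lambda\bI_p)^{-1}$ by a surrogate depending only on the common part $\hSigma_{1\cap 2}$ together with a conditionally-deterministic rank correction $\bC_j$ absorbing the independent residual rows. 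The product equivalents then follow from the trace and product rules of \Cref{lem:calculus-detequi}, and the nonlinear factors $\tv$ and $\tv_v$ arise as the standard second-moment fixed-point identities associated with the anisotropic sample covariance (a Rubio--Mestre style companion relation). The ratio $i_0/k\asto\phi/\psi$ concentrates by hypergeometric tail bounds and supplies the prefactor $\phi^{-1}$ appearing in the variance resolvent.

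The main obstacle is the bias resolvent with $\bC=\hSigma$, since the central factor is now correlated with both outer resolvents. My plan is to split the full sample into four disjoint pieces:
\begin{equation*}
    [n] = (I_1\cap I_2)\sqcup(I_1\setminus I_2)\sqcup(I_2\setminus I_1)\sqcup\bigl([n]\setminus(I_1\cup I_2)\bigr),
\end{equation*}
and correspondingly decompose $n\hSigma$ into four block sums. The block supported on $[n]\setminus(I_1\cup I_2)$ is independent of both resolvents, reducing to the bias case with independent $\bC$. The block on $I_1\cap I_2$ reduces to the variance resolvent case, up to the sample-fraction prefactor $|I_1\cap I_2|/n$. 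For the two cross-blocks $\hSigma_{1\setminus 2}$ and $\hSigma_{2\setminus 1}$, each is independent of exactly one of the two outer resolvents, and I would expose this by applying a Sherman--Morrison style rank-update expansion on the resolvent that shares those rows, reducing to products in which the middle matrix is independent of one outer factor. The hard part is the bookkeeping: summing the four contributions and verifying, via the self-consistent fixed-point equation defining $v_p(-\lambda;\psi)$, that everything collapses into the compact form $\tilde d(-\lambda;\phi,\psi)(1+\tv(-\lambda;\phi,\psi))(v_p(-\lambda;\psi)\bSigma+\bI_p)^{-2}\bSigma$, with the shrinkage factor $\tilde d=\bigl(1-\phi\int v_p r/(1+v_p r)\,\rd H_p(r)\bigr)^2$ emerging naturally from the same GCV-type identity used in \citep{du2023gcv}.
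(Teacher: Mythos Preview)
Your plan for parts (1)--(3) matches the paper's: apply the single-subsample anisotropic ridge equivalent (respectively, the two-subsample bias/variance equivalents obtained by conditioning on the overlap via the conditional deterministic equivalent machinery of \citep{patil2022bagging}), and then pass the $\EE_{I\sim\cI_k}$ average through using the trace definition of $\asympequi$. The paper does exactly this, citing Lemma~S.2.4 of \citep{patil2022bagging} and Lemma~E.8 of \citep{du2023gcv} rather than rederiving them.

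For part (4) your route differs. The paper uses only a two-way split,
\[
\hSigma=\frac{|I_1\cup I_2|}{n}\hSigma_{1\cup 2}+\frac{n-|I_1\cup I_2|}{n}\hSigma_{(1\cup 2)^c},
\]
handles the complement piece via part~(2) (after replacing $\hSigma_{(1\cup 2)^c}$ by $\bSigma$ through Lemma~E.8~(1) of \citep{du2023gcv}), and dispatches the union piece in one shot by citing Lemma~D.6~(3) of \citep{du2023gcv}, which already gives the closed form for $\bM_1\hSigma_{1\cup 2}\bM_2$. Your four-way split is correct but rebuilds that cited lemma from parts (1)--(3), trading a citation for extra bookkeeping. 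One caution on the cross-blocks: $I_1\setminus I_2$ has $k-i_0\asymp k(1-\phi/\psi)$ rows, so a ``Sherman--Morrison style'' expansion is really a high-rank Woodbury and does not cleanly decouple. The simpler route is the algebraic identity $\bM_1\hSigma_1=\bI_p-\lambda\bM_1$, which yields
\[
\frac{|I_1\setminus I_2|}{k}\,\bM_1\hSigma_{1\setminus 2}\bM_2
=\bM_2-\lambda\,\bM_1\bM_2-\frac{i_0}{k}\,\bM_1\hSigma_{1\cap 2}\bM_2,
\]
reducing the cross pieces directly to parts (1), (2) with $\bC=\bI_p$, and (3). After summing and substituting the fixed-point relation for $v_p(-\lambda;\psi)$, the prefactor $\tilde d(-\lambda;\phi,\psi)$ emerges as you anticipate.
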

\begin{proof}
    The proofs for different parts is separated below.

    \begin{enumerate}[(1), leftmargin=7mm]
    
    \item \underline{Basic ridge resolvent.}

    From \Cref{def:cond-deterministic-equivalent}, we know that $\lambda(\hSigma_j + \lambda\bI_p)^{-1}\asympequi (v_p(-\lambda;\psi)\bSigma+\bI_p)^{-1}$.
    By the definition of deterministic equivalents in \Cref{def:deterministic-equivalent}, for any $\bA\in\RR^{p\times p}$ that has bounded trace norm and is independent to $\bX$, we have 
    \begin{align*}
        \tr[\bA(\lambda(\hSigma_j + \lambda\bI_p)^{-1}- (v_p(-\lambda;\psi)\bSigma+\bI_p)^{-1})] \asto 0.
    \end{align*}
    From Lemma G.5 (2) of \citep{du2023gcv}, it follows that
    \begin{align}
        &\tr[\bA(\lambda\EE_{I_1\sim\cI_k}[(\hSigma_j + \lambda\bI_p)^{-1}]- (v_p(-\lambda;\psi)\bSigma+\bI_p)^{-1})]\notag\\
        &=\EE_{I_1\sim\cI_k}[ \tr[\bA(\lambda(\hSigma_j + \lambda\bI_p)^{-1}- (v_p(-\lambda;\psi)\bSigma+\bI_p)^{-1})]]\asto 0, \label{eq:lem:det-equiv-full-eq-1}
    \end{align}
    which implies that
    \begin{align*}
        \lambda\EE_{I_1\sim\cI_k}[(\hSigma_1 + \lambda\bI_p)^{-1}] \asympequi  (v_p(-\lambda;\psi)\bSigma+\bI_p)^{-1}.
    \end{align*}

    % \textbf{Part (2)} 
    \item \underline{Bias resolvent with $\bC \indep \bX$.}

    Denote $\bM_j=(\hSigma_j + \lambda\bI_p)^{-1}$ for $j=1,2$.
    From Part (c) of the proof for Lemma S.2.4 in \citep{patil2022bagging}, it follows that for $I_1,I_2\sim\cI_k$,
    \begin{align*}
        \lambda^2\bM_1\bC\bM_2 \asympequi \left(v_p(-\lambda;\psi) \bSigma + \bI_p\right)^{-1} (\tv_p(-\lambda;\phi,\psi,\bC)\bSigma+\bC)\left(v_p(-\lambda;\psi) \bSigma + \bI_p\right)^{-1}. 
    \end{align*}
    By the same argument as in \eqref{eq:lem:det-equiv-full-eq-1}, the conclusion follows.

    \item \underline{Variance resolvent with $\bC \indep \bX$.}

    % \textbf{Part (3)} 
    
    From Lemma E.8 (3) of \citep{du2023gcv}, it follows that for $I_1,I_2\sim\cI_k$,
    \begin{align*}
        \bM_1\hSigma_{1\cap 2}\bM_2\bC \asympequi \phi^{-1}\tv_v(-\lambda;\phi,\psi)( v_p(-\lambda;\psi) \bSigma + \bI_p)^{-2}\bSigma\bC. 
    \end{align*}   
    By the same argument as in \eqref{eq:lem:det-equiv-full-eq-1}, the conclusion follows.

    % \textbf{Part (4)} 
    \item \underline{Bias resolvent with $\bC = \hSigma$.}
    
    We begin by decomposing the object inside expectation into two terms:
    \begin{align}
        \lambda^2\bM_1\hSigma\bM_2 = \frac{|I_1\cup I_2|}{n}\lambda^2\bM_1\hSigma_{1\cup 2}\bM_2 + \frac{n-|I_1\cup I_2|}{n}\lambda^2\bM_1\hSigma_{(1\cup 2)^c}\bM_2 \label{eq:lem:det-equiv-full-4-eq-1}
    \end{align}
    where $\hSigma_{1\cup 2}= \bX^{\top}\bL_{I_1\cup I_2}\bX/|I_1\cup I_2|$ and $\hSigma_{(1\cup 2)^c}= \bX^{\top}(\bI-\bL_{I_1\cup I_2})\bX/(n-|I_1\cup I_2|)$.
    Next, we analyze each of them.
        
    For the first term, from Lemma D.6 (3) of \citep{du2023gcv} we have
    \begin{align}
        \begin{split}
            \bM_1\hSigma_{1\cup 2}\bM_2 &\asympequi v_p(-\lambda;\psi)^2(1+\tv_p(-\lambda;\phi,\psi)) \\
            &\qquad \left(\frac{2(\psi-\phi)}{2\psi-\phi} \frac{1}{\lambda v_p(-\lambda;\psi)} + \frac{\phi}{2\psi-\phi} \right)(v_p(-\lambda;\psi)\bSigma+\bI_p)^{-2}\bSigma    
        \end{split}\label{eq:lem:det-equiv-full-4-eq-2}
    \end{align}
    where $\tv_p(-\lambda;\phi,\psi) = \tv_p(-\lambda;\phi,\psi,\bSigma)$.

    For the second term, from Lemma E.8 (1) of \citep{du2023gcv} and the product rule of calculus in \Cref{lem:calculus-detequi}~\ref{lem:calculus-detequi-item-product}, we have
    \begin{align*}
        \lambda^2\bM_1\hSigma_{(1\cup 2)^c}\bM_2 \asympequi \lambda^2\bM_1\bSigma\bM_2.
    \end{align*}
    From Part (2) and \Cref{lem:calculus-detequi}~\ref{lem:calculus-detequi-item-equivalence}, it follows that
    \begin{align}
        \lambda^2\bM_1\hSigma_{(1\cup 2)^c}\bM_2 \asympequi \lambda^2\bM_1\bSigma\bM_2 \asympequi (1+\tv_p(-\lambda;\phi,\psi))\left(v_p(-\lambda;\psi) \bSigma + \bI_p\right)^{-2}\bSigma .\label{eq:lem:det-equiv-full-4-eq-3}
    \end{align}

    Finally, for the coefficients of the two terms, from Lemma S.8.3 of \citep{patil2022bagging}, we have that
    \begin{align}
        \frac{|I_1\cup I_2|}{n}= \frac{|I_1|+|I_2|-|I_1\cap I_2|}{n}\asto \frac{\phi(2\psi-\phi)}{\psi^2},\qquad \frac{n-|I_1\cup I_2|}{n}\asto \frac{(\psi-\phi)^2}{\psi^2}. \label{eq:lem:det-equiv-full-4-eq-4}
    \end{align}

    Combining \eqref{eq:lem:det-equiv-full-4-eq-1}-\eqref{eq:lem:det-equiv-full-4-eq-4} yields that
    \begin{align*}
        &\ \lambda^2\bM_1\hSigma\bM_2 \\
        &\asympequi \frac{1}{\psi^2}\left(2(\psi-\phi)\phi \lambda v_p(-\lambda;\psi) + \phi^2\lambda^2 v_p(-\lambda;\psi)^2 + (\psi-\phi)^2\right)\\
        &\qquad (1+\tv_p(-\lambda;\phi,\psi)) (v_p(-\lambda;\psi)\bSigma+\bI_p)^{-2}\bSigma \\
        &= \frac{(\phi\lambda v_p(-\lambda;\psi) + \psi-\phi)^2}{\psi^2}(1+\tv_p(-\lambda;\phi,\psi))  (v_p(-\lambda;\psi)\bSigma+\bI_p)^{-2}\bSigma \\
        &= \left(1 - \phi\int\frac{v_p(-\lambda;\psi)r}{1+v_p(-\lambda;\psi)r}\rd H_p(r)\right)^2 (1+\tv_p(-\lambda;\phi,\psi)) (v_p(-\lambda;\psi)\bSigma+\bI_p)^{-2}\bSigma 
    \end{align*}
    where the last equality follows by substituting $\lambda = v_p(-\lambda;\psi)^{-1} - \psi\int r(1+v_p(-\lambda;\psi)r)^{-1} \rd H_p(r)$ based on the fixed-point equation.
        \end{enumerate}
\end{proof}

\subsection{Convergence of random linear and quadratic forms}
    \begin{lemma}[Concentration of linear form 
with independent components and varying coefficients]
    \label{lem:concen-linearform-uncorrelated}
    Let $\bz_i \in \RR^{p}$ for $i = 1, \dots, n$ be a sequence of random vectors
    with i.i.d.\ entries $z_{ij}$, $j = 1, \dots, p$ 
    such that for each $i,j$, $\EE[z_{ij}] = 0$, $\EE[z_{ij}^2] = 1$, $\EE[|z_{ij}|^{4+\alpha}] \le M_\alpha$ for some $\alpha > 0$ and constant $M_\alpha < \infty$.
    Let $\bZ= [\bz_1,\ldots,\bz_n]^{\top}\in\RR^{n\times p}$ be the random matrix
    formed by concatenating $\bz_i$'s.
    Let $g:\RR^p\rightarrow\RR$ be any measurable function 
    such that 
    $\EE[g(\bz_i)]=0$ and $\EE[\bz_i g(\bz_i)]=0$ for $i = 1, \dots, n$. Let $\ba_p \in \RR^{p}$ be a sequence of random vectors independent of $\bz_p$ such that $\limsup_{p} \| \ba_p \|^2 / p \le M_0$ almost surely for a constant $M_0 < \infty$.
    Let $\bD$ be a positive semidefinite matrix such that $\limsup\|\bD\|_{\oper}\leq M_0$ almost surely as $p\rightarrow\infty$ for some constant $M_0<\infty$.
    Then, as $n,p \to \infty$ such that $p/n\rightarrow\phi\in(0,\infty)$,
    we have
    \begin{equation}
        \label{eq:linform-concen-nlmod}
        \left|
            \frac{1}{n}
            \sum_{i=1}^{n}
            \left[
            \ba^\top
            \left( \frac{\bD^{\frac{1}{2}} \bZ^\top \bZ \bD^{\frac{1}{2}}}{n} + \lambda \bI_p \right)^{-1}
            \bD^{\frac{1}{2}}
            \bZ^\top
            \right]_{i}
            g(\bz_i)
        \right| \asto 0.
    \end{equation}
\end{lemma}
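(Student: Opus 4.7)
The plan is a leave-one-out reduction via Sherman--Morrison to decouple the $i$-th row of $\bZ$ from the resolvent, followed by a second-moment argument that uses both null-projection conditions $\EE[g(\bz_i)]=0$ and $\EE[\bz_i g(\bz_i)]=\zero$ to force vanishing of the mean and variance, respectively. First I would introduce the leave-$i$-out matrix $\bM_{-i} := \bD^{1/2}\bZ_{-i}^\top\bZ_{-i}\bD^{1/2}/n + \lambda\bI_p$ and, writing $\bM := \bD^{1/2}\bZ^\top\bZ\bD^{1/2}/n + \lambda\bI_p = \bM_{-i} + n^{-1}\bD^{1/2}\bz_i\bz_i^\top\bD^{1/2}$, apply Sherman--Morrison to obtain
\[
    \ba^\top\bM^{-1}\bD^{1/2}\bz_i \;=\; \gamma_i^{-1}\,\ba^\top\bM_{-i}^{-1}\bD^{1/2}\bz_i,
    \qquad \gamma_i \;:=\; 1 + \tfrac{1}{n}\bz_i^\top\bD^{1/2}\bM_{-i}^{-1}\bD^{1/2}\bz_i \;\ge\; 1.
\]
This rewrites the sum in \eqref{eq:linform-concen-nlmod} as $n^{-1}\sum_i \gamma_i^{-1}\,\ba^\top\bM_{-i}^{-1}\bD^{1/2}\bz_i\,g(\bz_i)$, in which the crucial structural fact is that $\bz_i$ is now independent of the matrix $\bM_{-i}^{-1}$.

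The second step is to strip the random scalars $\gamma_i^{-1}$ out of the sum. Using a $(4+\nu)$-th moment quadratic-form concentration bound together with $\|\bD\|_{\oper} \le M_0$ and $\|\bM_{-i}^{-1}\|_{\oper} \le \lambda^{-1}$, one shows $\max_i |\gamma_i - (1 + n^{-1}\tr[\bD\bM_{-i}^{-1}])| \asto 0$; since the trace differs across $i$ by only an $O(1/n)$ rank-one perturbation (one more leave-one-out step), the $\gamma_i$'s collectively concentrate on a common deterministic surrogate $\bar\gamma \ge 1$. Replacing $\gamma_i^{-1}$ by $\bar\gamma^{-1}$ in the sum introduces an error of order $\max_i|\gamma_i^{-1}-\bar\gamma^{-1}|$ multiplied by an almost-surely bounded quantity (via Cauchy--Schwarz and the trivial bounds $\|\ba\|_2^2/p \le M_0$, $\|\bz_i\|_2^2/p = 1 + o_{\mathrm{a.s.}}(1)$, and finite moments of $g(\bz_i)$), hence is negligible. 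The problem therefore reduces to showing $M_n := n^{-1}\sum_i \ba^\top \bM_{-i}^{-1}\bD^{1/2}\bz_i\,g(\bz_i) \asto 0$.

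For this main term, $\bz_i \indep (\bZ_{-i}, \ba, \bD)$, so conditioning on $(\bZ_{-i},\ba,\bD)$ and invoking $\EE[\bz_i g(\bz_i)] = \zero$ yields $\EE[M_n] = 0$ directly. For the variance $\EE[M_n^2] = n^{-2}\sum_{i,j}\EE[\xi_i \xi_j]$ with $\xi_i := \ba^\top\bM_{-i}^{-1}\bD^{1/2}\bz_i\, g(\bz_i)$, the diagonal terms contribute $O(n^{-1})$ via $\EE[g(\bz_i)^2] < \infty$ together with the operator-norm control on $\bM_{-i}^{-1}\bD\bM_{-i}^{-1}$. For off-diagonal $(i,j)$ I would apply a second Sherman--Morrison to write $\bM_{-i}^{-1} = \bM_{-\{i,j\}}^{-1} + (\text{rank-one in }\bz_j)$; the leading piece then splits into a product of two conditionally-mean-zero factors (each killed by $\EE[\bz_i g(\bz_i)] = \zero$ and $\EE[\bz_j g(\bz_j)] = \zero$ after conditioning on $\bZ_{-\{i,j\}}$), while the residual rank-one corrections are $o(1)$ after normalization by standard bilinear-form concentration under the moment hypotheses. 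Upgrading $\EE[M_n^2] = o(1)$ to almost-sure convergence follows either from a parallel $(4+\nu)$-th moment computation combined with Borel--Cantelli, or equivalently from a Burkholder--Davis--Gundy bound on the Doob martingale along the filtration $\cF_k = \sigma(\bz_1,\dots,\bz_k)$.

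The main obstacle is the cross-term control in the variance step: $\bM_{-i}^{-1}$ still depends on every other row $\bz_j$ of $\bZ$, so genuine independence between $\xi_i$ and $\xi_j$ is unavailable, and one must iterate Sherman--Morrison and bound each residual coupling uniformly using only bounded $(4+\nu)$-th moments rather than sub-Gaussianity. Both null-projection hypotheses on $g$ are essential here: $\EE[g(\bz_i)] = 0$ eliminates the ``scalar'' contribution that appears when the $\bz_i$ factor is absorbed into the resolvent via Sherman--Morrison, while $\EE[\bz_i g(\bz_i)] = \zero$ is what forces the dominant cross-expectations to vanish after the leave-one-out decoupling. This is precisely the coupling between linear and nonlinear components that \citet{bartlett_montanari_rakhlin_2021} handle, and the adaptation of their machinery to the present subsampled/ensemble setting, noted as a technical contribution in the introduction, is what drives the final estimate.
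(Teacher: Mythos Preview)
Your proposal is correct and follows the same leave-one-out/Sherman--Morrison strategy as the paper. After the decoupling step the paper's proof is shorter: instead of replacing the $\gamma_i$ by a common constant and then carrying out the second-moment/cross-term analysis from scratch, it simply bounds the $\gamma_i$ factors crudely (using $d_i\ge 0$) and delegates the remaining concentration of $n^{-1}\sum_i \bb_i^\top\bz_i\,g(\bz_i)$, with $\bb_i\indep\bz_i$, to an existing lemma from \cite{patil2022mitigating}.
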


\begin{proof}
    We will use the standard leave-one-out trick
    to break the dependence between 
    the $i$-th component multiplier in the summation in \eqref{eq:linform-concen-nlmod} and 
    $g(z_i)$ for each $i = 1, \dots, n$.
    To that end, using the Woodbury matrix identity,
    first observe that
    \begin{align}
        \left( \frac{ \bD^{\frac{1}{2}} \bZ^{\top} \bZ \bD^{\frac{1}{2}}}{n} + \lambda \bI_p  \right)^{-1} \nonumber
        &= \left( \frac{ \bD^{\frac{1}{2}} \bZ_{-i}^{\top} \bZ_{-i} \bD^{\frac{1}{2}}}{n} 
        + \frac{\bD^{\frac{1}{2}} \bz_i \bz_i^{\top} \bD^{\frac{1}{2}}}{n} + \lambda \bI_p \right)^{-1} \nonumber \\
        &=
        \left( \frac{\bD^{\frac{1}{2}} \bZ_{-i}^\top \bZ_{-i} \bD^{\frac{1}{2}}}{n} + \lambda \bI_p \right)^{-1} \nonumber \\
        &- 
        \ddfrac{ \left(\frac{\bD^{\frac{1}{2}} \bZ_{-i}^\top \bZ_{-i} \bD^{\frac{1}{2}}}{n} + \lambda \bI_p \right)^{-1} 
        \frac{\bD^{\frac{1}{2}} \bz_i \bz_i^\top \bD^{\frac{1}{2}}}{n} \left(\frac{\bD^{\frac{1}{2}} \bZ_{-i}^\top \bZ_{-i} \bD^{\frac{1}{2}}}{n} + \lambda \bI_p \right)^{-1} }{1 + \frac{\bz_i^\top \bD^{\frac{1}{2}} \left( \frac{\bD^{\frac{1}{2}} \bZ_{-i}^\top \bZ_{-i} \bD^{\frac{1}{2}}}{n} + \lambda \bI_p \right)^{-1} \bD^{\frac{1}{2}} \bz_i}{n}} \label{eq:loo-resolvent}.
    \end{align}
    Plugging \eqref{eq:loo-resolvent} back into \eqref{eq:linform-concen-nlmod},
    we expand the desired sum into:
    \begin{align}
        &
        \frac{1}{n}
        \sum_{i=1}^{n}
        \left[ \ba^\top \left(\frac{\bD^{\frac{1}{2}} \bZ^\top \bZ \bD^{\frac{1}{2}}}{n} + \lambda \bI_p\right)^{-1} \bD^{\frac{1}{2}} \bZ^\top \right]_{i}
        g(\bz_i) \nonumber \\
        &=
        \frac{1}{n}
        \sum_{i=1}^{n}
        \ba^\top
        \left( \frac{\bD^{\frac{1}{2}} \bZ^\top \bZ \bD^{\frac{1}{2}}}{n} + \lambda \bI_p \right)^{-1}
        \bD^{\frac{1}{2}} \bz_i g(\bz_i) \nonumber \\
        &=
        \frac{1}{n}
        \sum_{i=1}^{n}
        \ba^\top
        \left( \frac{\bD^{\frac{1}{2}} \bZ_{-i}^\top \bZ_{-i} \bD^{\frac{1}{2}}}{n} + \lambda \bI_p \right)^{-1}
        \bD^{\frac{1}{2}} \bz_i g(\bz_i) \nonumber \\
        &\quad -
        \frac{1}{n}
        \sum_{i=1}^{n}
        \ddfrac{\ba^\top 
        \left( \frac{\bD^{\frac{1}{2}} \bZ_{-i}^\top \bZ_{-i} \bD^{\frac{1}{2}}}{n} + \lambda \bI_p \right)^{-1} \frac{\bD^{\frac{1}{2}} \bz_i \bz_i^\top \bD^{\frac{1}{2}}}{n} 
        \left( \frac{\bD^{\frac{1}{2}} \bZ_{-i}^\top \bZ_{-i} \bD^{\frac{1}{2}}}{n} + \lambda \bI_p \right)^{-1} \bD^{\frac{1}{2}} \bz_i g(\bz_i)}{
        1 + \frac{\bz_i^\top \bD^{\frac{1}{2}} \left( \frac{\bD^{\frac{1}{2}} \bZ_{-i}^\top \bZ_{-i} \bD^{\frac{1}{2}}}{n} + \lambda \bI_p \right)^{-1} \bD^{\frac{1}{2}} \bz_i}{n} 
        } \nonumber \\
        &=
        \frac{1}{n}
        \sum_{i=1}^{n}
        \bb_i^\top \bz_i g(\bz_i)
        - \frac{1}{n}
        \sum_{i=1}^{n}
        \frac{\bb_i^\top d_i \bz_i g(\bz_i)  }{1 + d_i} \label{eq:linform-nlmod-expansion-1} \\
        &\le
        \frac{1}{n}
        \sum_{i=1}^{n}
        (1 + d_i)
        \bb_i^\top \bz_i g(\bz_i)
        \label{eq:linform-nlmod-expansion-2},
    \end{align}
    where in step \eqref{eq:linform-nlmod-expansion-1},
    we denote by:
    \begin{align*}
        \bb_i &= \bD^{\frac{1}{2}} \left( \frac{\bD^{\frac{1}{2}} \bZ_{-i}^\top \bZ_{-i} \bD^{\frac{1}{2}}}{n} + \lambda \bI_p \right)^{-1} \ba,\\
        d_i
        &=
        \ddfrac{\bz_i^\top \bD^{\frac{1}{2}} \left( \frac{\bD^{\frac{1}{2}} \bZ_{-i}^\top \bZ_{-i} \bD^{\frac{1}{2}}}{n} + \lambda \bI_p \right)^{-1} \bD^{\frac{1}{2}} \bz_i}{n},
    \end{align*}
    and step \eqref{eq:linform-nlmod-expansion-2} follows since $d_i \ge 0$.
    It is easy to check that $\limsup_{p} \| b_i \|_2^2 / p \le C_1 < \infty$, and $d_i \asto C_2 < \infty$ for some constants $C_1$ and $C_2$.
    Appealing to Lemma S.8.5 of \citep{patil2022mitigating}, \eqref{eq:linform-nlmod-expansion-2} almost surely converges to $0$.
    This finishes the proof.
\end{proof}

\begin{lemma}
    [Concentration of sum of quadratic forms with independent components
    and independent varying inner matrices]\label{lem:concen-quadform-uncorrelated}
    Let $\bz_p \in \RR^{p}$ be a sequence of random vector with i.i.d.\ entries $z_{pi}$, $i = 1, \dots, p$ such that for each i, $\EE[z_{pi}] = 0$, $\EE[z_{pi}^2] = 1$, $\EE[|z_{pi}|^{4+\alpha}] \le M_\alpha$ for some $\alpha > 0$ and constant $M_\alpha < \infty$.
    Let $\bZ= [\bz_1,\ldots,\bz_n]^{\top}\in\RR^{n\times p}$ be the design matrix.
    Let $g:\RR^p\rightarrow\RR$ be any measurable function such that $\EE[\bz_i g(\bz_i)]=0$ and $\EE[g(\bz_i)]=1$.
    Let $\bD$ be a positive semidefinite matrix such that $\limsup\|\bD\|_{\oper}\leq M_0$ almost surely as $p\rightarrow\infty$ for some constant $M_0<\infty$.
    Then, as $n,p \to \infty$ such that $p/n\rightarrow\phi\in(0,\infty)$,
    \begin{equation}
            \label{eq:quadform-correlated}
            \left|\frac{1}{n}\sum_{1\leq i\neq j\leq n}
            \left(\frac{\bZ\bD\bZ^{\top}}{n}+ \lambda \bI_n\right)^{-1}_{ij}g(\bz_i)g(\bz_j)
        \right| \asto
        0.
    \end{equation}
\end{lemma}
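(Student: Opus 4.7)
The plan is to adapt the leave-two-out argument from the proof of Lemma A.16 in \cite{bartlett_montanari_rakhlin_2021}: use the Woodbury identity to express the off-diagonal resolvent entry $(\bM^{-1})_{ij}$ in terms of a matrix that is independent of $(\bz_i, \bz_j)$, and then exploit the orthogonality $\EE[\bz_i g(\bz_i)] = \mathbf{0}$ via a second-moment estimate on the resulting sum.

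Setting $\bM = \tfrac{1}{n}\bZ\bD\bZ^\top + \lambda \bI_n$ and $\bN = \lambda\bI_p + \tfrac{1}{n}\bD^{1/2}\bZ^\top \bZ \bD^{1/2}$, the push-through identity yields, for $i\neq j$, $(\bM^{-1})_{ij} = -(\lambda n)^{-1}\bz_i^\top \bD^{1/2}\bN^{-1}\bD^{1/2}\bz_j$. Applying the rank-two Woodbury identity to the decomposition $\bN = \bN_{-ij} + \tfrac{1}{n}\bD^{1/2}(\bz_i\bz_i^\top + \bz_j\bz_j^\top)\bD^{1/2}$, where $\bN_{-ij}$ is built only from $\{\bz_k : k \neq i, j\}$ and is therefore independent of $(\bz_i, \bz_j)$, a short algebraic computation yields
\begin{equation*}
    (\bM^{-1})_{ij} \;=\; -\frac{a_{ij}}{\lambda n \,\Delta_{ij}},\qquad \Delta_{ij} \;=\; (1 + a_{ii}/n)(1+a_{jj}/n) - (a_{ij}/n)^2,
\end{equation*}
where $a_{kl} := \bz_k^\top \bD^{1/2}\bN_{-ij}^{-1}\bD^{1/2}\bz_l$ for $k, l \in \{i, j\}$. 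Standard concentration of quadratic forms under the $(4+\alpha)$-moment hypothesis (e.g.\ the machinery in \cite{bai2010spectral}) then shows that $a_{ii}/n$ and $a_{jj}/n$ converge almost surely and uniformly in $(i,j)$ to $\tau_n := n^{-1}\tr(\bD \bN_{-ij}^{-1})$ (which remains bounded thanks to $\|\bD\|_{\oper}\le M_0$), while $a_{ij}/n \asto 0$ because $\bN_{-ij}^{-1}$ is independent of both $\bz_i$ and $\bz_j$; hence $\Delta_{ij}$ is uniformly bounded above and away from zero almost surely, and it suffices to prove $n^{-2}\sum_{i \neq j} g(\bz_i)g(\bz_j)\,\bz_i^\top \bD^{1/2}\bN_{-ij}^{-1}\bD^{1/2}\bz_j \asto 0$.

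Each summand of this reduced sum is conditionally mean zero given $\bN_{-ij}^{-1}$, because $(\bz_i, \bz_j)$ are independent of $\bN_{-ij}^{-1}$ and of each other, and $\EE[g(\bz_i)\bz_i] = \mathbf{0}$. I would then bound the second moment by partitioning pairs $(\{i,j\},\{k,l\})$ according to their overlap: pairs sharing one or two indices contribute only $O(1/n)$ to $\EE[(\cdot)^2]$ after the $n^{-2}$ normalization, while the disjoint pairs couple only through the shared randomness in $\bN_{-ij}^{-1}$ and $\bN_{-kl}^{-1}$, which I would make explicit through a further Woodbury step relating both matrices to a common leave-four-out resolvent $\bN_{-\{i,j,k,l\}}^{-1}$ whose leading-order contribution vanishes by another application of $\EE[g(\bz_m)\bz_m] = \mathbf{0}$. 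Combined with the $(4+\alpha)$-moment hypothesis and Borel--Cantelli, this upgrades $L^2$ convergence to almost-sure convergence. The main obstacle will be obtaining the quadratic-form concentrations \emph{uniformly} over the $n(n-1)$ index pairs (requiring a union bound that consumes the higher-moment assumption $\EE[|z_{ij}|^{4+\alpha}]<\infty$) and carefully controlling the cross-covariances that arise from the overlapping randomness in $\bN_{-ij}^{-1}$ and $\bN_{-kl}^{-1}$ in the disjoint-pair case, a bookkeeping step that closely parallels the off-diagonal analysis in \cite{bartlett_montanari_rakhlin_2021}.
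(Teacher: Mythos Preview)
Your proposal follows essentially the same route as the paper: a leave-two-out Woodbury decomposition to express $(\bM^{-1})_{ij}$ through the matrix $\bN_{-ij}^{-1}$ that is independent of $(\bz_i,\bz_j)$, uniform control of the denominator $\Delta_{ij}$ (the paper's $d_{ij}=\lambda^2\Delta_{ij}$), and reduction to showing
\[
\frac{1}{n^{2}}\sum_{i\neq j}(\bz_ig(\bz_i))^\top \bD^{1/2}\bN_{-ij}^{-1}\bD^{1/2}(\bz_jg(\bz_j))\asto 0.
\]
The paper dispatches this last step by invoking Lemma~S.8.5 of \cite{patil2022mitigating} and Lemma~G.5(2) of \cite{du2023gcv} as black boxes, whereas you outline an explicit second-moment calculation with a further leave-four-out refinement for disjoint index pairs. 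One caution: a variance bound of order $O(1/n)$ is not summable, so Borel--Cantelli will not apply directly from the second moment alone; you will indeed need to leverage the $(4+\alpha)$-moment hypothesis to control a higher moment (or interpose a subsequence argument) to secure almost-sure rather than $L^2$ convergence.
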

\begin{proof}
    The strategy in the proof is to express each of the $ij$-the entry of the resolvent in \eqref{eq:quadform-correlated} such that the dependence on ($\bz_i$, $\bz_j$) and the rest of ($\bz_k$ : $k \neq i, j$) is separated.
    One is then able to use the uncorrelatedness of $\bz$ and $g(\bz)$ along with standard concentration of a quadratic form with respect to an independent matrix.
    Similar strategy has been used in \cite{bartlett_montanari_rakhlin_2021} when analyzing kernel ridge regression under proportional asymptotics.

    Denote $\bY=({\bZ\bD\bZ^{\top}}/{n}+ \lambda \bI_n)^{-1}$.
    For each pair of $i,j\in[n]$ and $i\neq j$, we let $\boldsymbol{Z}_{-(ij)} \in \mathbb{R}^{(n-2) \times p}$ be the matrix comprising the $n-2$ rows of $\boldsymbol{Z}$ excluding the $i$th and $j$th rows, and $\boldsymbol{U} \in \mathbb{R}^{p \times 2}$ be the matrix with columns $\boldsymbol{U}_{ij} \boldsymbol{e}_i=\boldsymbol{z}_i, \boldsymbol{U} \boldsymbol{e}_j=\boldsymbol{z}_j$. We finally define the matrices $\boldsymbol{R}_{-(ij)} \in \mathbb{R}^{p \times p}$ as follows:
    \begin{align*}
    \boldsymbol{R}_{-(ij)} & =\lambda \bD^{1 / 2}\left(\bD^{1 / 2} \boldsymbol{Z}_{-(ij)}^{\top} \boldsymbol{Z}_{-(ij)} \bD^{1 / 2} / n+\lambda \mathbf{I}_p\right)^{-1} \bD^{1 / 2}, 
    \end{align*}
    and let $\tilde{\bY}_{ij}=\{Y_{m,\ell}\}_{m,\ell\in\{i,j\}}$ be the submatrix of $\bY$.
    Then, 
    using the block matrix inversion formula 
    (see, e.g., Appendix A.1.4 of \cite{bai2010spectral})
    and the Woodbury matrix identity,
    one can show that
    \begin{align*}
        \tilde{\bY}_{ij} & =\left(\boldsymbol{U}_{ij}^{\top} \boldsymbol{R}_{-(ij)} \boldsymbol{U}_{ij} / n+\lambda \mathbf{I}_2\right)^{-1}, \\
        Y_{ij} & =-\frac{\left\langle\boldsymbol{z}_i, \boldsymbol{R}_{-(ij)} \boldsymbol{z}_j\right\rangle / n}{d_{ij}},
    \end{align*}
    where $d_{ij}$ is given by:
    \begin{align*}
        d_{ij}=\left(\lambda+\left\langle\boldsymbol{z}_i, \boldsymbol{R}_{-(ij)} \boldsymbol{z}_j\right\rangle / n\right)\left(\lambda+\left\langle\boldsymbol{z}_i, \boldsymbol{R}_{-(ij)} \boldsymbol{z}_j\right\rangle / n\right)-\left\langle\boldsymbol{z}_i, \boldsymbol{R}_{-(ij)} \boldsymbol{z}_j\right\rangle^2 / n^2.
    \end{align*}
    Since $\boldsymbol{z}_i$, $\boldsymbol{z}_j$, and $\boldsymbol{R}_{-(ij)}$ are mutually independent, by Lemma S.8.5 of \citep{patil2022mitigating}, we have $\left\langle\boldsymbol{z}_i, \boldsymbol{R}_{-(ij)} \boldsymbol{z}_j\right\rangle / n \asto 0$ and the denominator concentrates on $\lambda^2$.
    By Lemma G.5 (1) of \citep{du2023gcv}, it follows that $\max_{1\leq i\neq j\leq n}d_{ij}\asto \lambda ^2.$
    Thus, there exists $N_0\in\NN$ such that for all $n\geq N_0$,  $\max_{1\leq i\neq j\leq n}d_{ij}\geq \lambda^2/2$ almost surely.
    Then, it follows that for $n\geq N_0$,
    \begin{align*}
        \left|\frac{1}{p}\sum_{1\leq i\neq j\leq n}
            Y_{ij}g(\bz_i)g(\bz_j)
        \right| &\leq \frac{2}{\lambda^2}\frac{1}{n^2}\sum_{1\leq i\neq j\leq n}
            |\left\langle\boldsymbol{z}_i, \boldsymbol{R}_{-(ij)} \boldsymbol{z}_j\right\rangle g(\bz_i)g(\bz_j)|\\
        &= \frac{2}{\lambda^2}\frac{1}{n^2}\sum_{1\leq i\neq j\leq n}
            (\boldsymbol{z}_ig(\bz_i))^{\top} \boldsymbol{R}_{-(ij)}(\boldsymbol{z}_jg(\bz_j))
        \\
        &\asto 0,
    \end{align*}
    where the last convergence is from Lemma S.8.5 of \citep{patil2022mitigating} and Lemma G.5 (2) of \citep{du2023gcv}.
\end{proof}

    \begin{lemma}\label{lem:matrix-identity}
    For any two conforming matrices $\bA$ and $\bB$ and any $t \neq 0$,
    we have
    \[
        \bA^{-1} \bB \bA^{-1}
        = (\bA^{-1} - (\bA + t \bB)^{-1}) / t + t \bA^{-1} \bB (\bA + t \bB)^{-1} \bB \bA^{-1}.
    \]
    \end{lemma}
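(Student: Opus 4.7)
The plan is to prove the claimed identity by starting from the standard resolvent identity and then combining terms on the right-hand side. The first step is to observe that for any conforming invertible $\bA$ and $\bA + t\bB$,
\[
    \bA^{-1} - (\bA + t\bB)^{-1} = \bA^{-1}\bigl[(\bA + t\bB) - \bA\bigr](\bA + t\bB)^{-1} = t\,\bA^{-1}\bB(\bA + t\bB)^{-1},
\]
so dividing by $t$ (which is allowed since $t \neq 0$) yields $(\bA^{-1} - (\bA+t\bB)^{-1})/t = \bA^{-1}\bB(\bA + t\bB)^{-1}$.

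Next, I would substitute this expression into the right-hand side of the claimed identity and factor:
\[
    \bA^{-1}\bB(\bA + t\bB)^{-1} + t\,\bA^{-1}\bB(\bA + t\bB)^{-1}\bB\bA^{-1}
    = \bA^{-1}\bB(\bA + t\bB)^{-1}\bigl[\bI + t\bB\bA^{-1}\bigr].
\]
The bracketed factor can be rewritten as $\bI + t\bB\bA^{-1} = (\bA + t\bB)\bA^{-1}$, so the expression collapses to $\bA^{-1}\bB(\bA + t\bB)^{-1}(\bA + t\bB)\bA^{-1} = \bA^{-1}\bB\bA^{-1}$, which matches the left-hand side.

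There is essentially no obstacle here: the identity is purely algebraic and follows from two applications of the resolvent identity $\bX^{-1} - \bY^{-1} = \bX^{-1}(\bY - \bX)\bY^{-1}$. The only implicit assumption one should flag is the invertibility of $\bA + t\bB$ (in addition to $\bA$), which is needed for the expressions on the right-hand side to be well defined; in the application within the proof of \Cref{lem:gen-risk-nonlinear} this holds because $\bB_1$ is positive definite and $\bB_2$ is positive semidefinite, so $\bB_1 + t\bB_2$ remains invertible for all $t \geq 0$.
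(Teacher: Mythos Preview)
Your proof is correct and in fact takes a cleaner route than the paper's. The paper applies the Woodbury identity twice: once to rewrite $(\bA^{-1}-(\bA+t\bB)^{-1})/t$ as $\bA^{-1}(t\bA^{-1}+\bB^{-1})^{-1}\bA^{-1}$, and once more to obtain $t\bB(\bA+t\bB)^{-1}\bB = \bB - (t\bA^{-1}+\bB^{-1})^{-1}$, after which the two pieces are combined by subtraction. Your argument via the resolvent identity $\bX^{-1}-\bY^{-1}=\bX^{-1}(\bY-\bX)\bY^{-1}$ followed by a single factorization $\bI+t\bB\bA^{-1}=(\bA+t\bB)\bA^{-1}$ is both shorter and strictly more general: the paper's derivation passes through $\bB^{-1}$ and so implicitly assumes $\bB$ is invertible, whereas yours requires only invertibility of $\bA$ and $\bA+t\bB$. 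This extra generality is actually relevant in the application within \Cref{lem:gen-risk-nonlinear}, where $\bB_2$ is only positive semidefinite. Your remark flagging the needed invertibility of $\bA+t\bB$ is appropriate.
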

    \begin{proof}
        We recall the Woodbury matrix identity:
        \[
            (\bA + \bU \bC \bV)^{-1}
            = \bA^{-1} - \bA^{-1} \bU (\bC^{-1} + \bV \bA^{-1} \bU)^{-1} \bV \bA^{-1}.
        \]
        This holds for any conforming matrices $\bA$, $\bU$, $\bC$, and $\bV$.
        We will need to apply the Woodbury matrix identity twice below.
    
        \begin{enumerate}[leftmargin=7mm]
            \item 
                Applying the Woodbury identity for the first time with $\bA = \bA$, $\bU = t$, $\bC = \bB$, and $\bV = 1$, we get
                \begin{equation*}
                    (\bA + t \bB)^{-1}
                    = \bA^{-1} - t \bA^{-1} (\bB^{-1} + t\bA^{-1})^{-1} \bA^{-1}.
                \end{equation*}
                Rearranging, this yields
                \begin{equation}
                    \label{eq:woodbury-app1}
                    (\bA^{-1} - (\bA + t \bB)^{-1}) / t
                    = \bA^{-1} (\bB^{-1} + t \bA^{-1})^{-1} \bA^{-1}
                    = \bA^{-1} (t \bA^{-1} + \bB^{-1})^{-1} \bA^{-1}.
                \end{equation}
            \item
                Applying the Woodbury identity the second time
                with $\bA = t\bB$, $\bU = 1$, $\bC = \bA$, and $\bV = 1$,
                we get
                \begin{equation*}
                    (\bA + t \bB)^{-1}
                    = t^{-1} \bB^{-1} - t^{-1} \bB^{-1} (\bA^{-1} + t^{-1} \bB^{-1})^{-1} t^{-1} \bB^{-1}.
                \end{equation*}
                Multiplying by $t\bB$ from the left yields
                \begin{equation*}
                    t\bB (A + t\bB)^{-1}
                    = I - (\bA^{-1} + t^{-1} \bB^{-1})^{-1} t^{-1} \bB^{-1}
                    = I - (t \bA^{-1} + \bB^{-1})^{-1} \bB^{-1}.
                \end{equation*}
                Now, multiplying by $\bB$ on the right, notice that
                \begin{equation}
                    \label{eq:woodbury-app2}
                    t\bB (A + t\bB)^{-1} \bB
                    = \bB - (t \bA^{-1} + \bB^{-1})^{-1}.
                \end{equation}
        \end{enumerate}
        From \eqref{eq:woodbury-app1}, observe that
        \begin{align*}
            \bA^{-1} \bB \bA^{-1}
            - (\bA^{-1} - (\bA + t\bB)^{-1}) / t
            &= \bA^{-1} \bB \bA^{-1} - \bA^{-1} (t \bA^{-1} + \bB^{-1})^{-1} \bA^{-1} \\
            &= \bA^{-1} (\bB - (t \bA^{-1} + \bB^{-1})^{-1}) \bA^{-1}.
        \end{align*}
        Using \eqref{eq:woodbury-app2}, we then have
        \begin{align*}
            \bA^{-1} \bB \bA^{-1}
            - (\bA^{-1} - (\bA + t\bB)^{-1}) / t
            = t \bA^{-1} \bB (\bA + t\bB)^{-1} \bB \bA^{-1}.
        \end{align*}
        Rearranging, we arrive at the desired equality:
        \[
            \bA^{-1} \bB \bA^{-1}
            = (\bA^{-1} - (\bA + t\bB)^{-1}) / t
            + t \bA^{-1} \bB (\bA + t\bB)^{-1} \bB \bA^{-1}.
        \]
    \end{proof}

\subsection{Concentration of energy of linear and nonlinear components}
\begin{lemma}
\label{lem:conv-norm}
        Under \Crefrange{asm:model}{asm:feat}, the best linear estimator $\bbeta_0$ and the nonlinear component $\fNL$ as defined in \eqref{eq:li-nl-decomposition} satisfies that $\|\bbeta_0\|_2$ and $\|\fNL\|_{L_{4+\delta}}$ are bounded almost surely.
    \end{lemma}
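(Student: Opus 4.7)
Both quantities in the statement are deterministic (functionals of the population distribution), so ``bounded almost surely'' really means uniformly bounded along the $p$-indexed sequence. My plan is to control $\|\bbeta_0\|_2$ first, then use this to control $\|\fNL\|_{L_{4+\delta}}$ by Minkowski and a Rosenthal-type moment inequality.

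\smallskip
\noindent\textbf{Step 1: bounding $\|\bbeta_0\|_2$.}
Since $\bbeta_0=\bSigma^{-1}\EE[\bx y]$ is the best linear projection, the orthogonality \smash{$\EE[(y-\bx^{\top}\bbeta_0)\bx]=\zero$} gives the Pythagorean decomposition
\begin{equation*}
    \EE[y^2]\;=\;\bbeta_0^{\top}\bSigma\bbeta_0 + \EE[\fNL(\bx)^2].
\end{equation*}
Combining this with the lower bound $\bbeta_0^{\top}\bSigma\bbeta_0\ge r_{\min}\|\bbeta_0\|_2^2$ from \Cref{asm:feat} and the Lyapunov bound $\EE[y^2]\le (\EE[|y|^{4+\mu}])^{2/(4+\mu)}\le M_{\mu}^{2/(4+\mu)}$ from \Cref{asm:model} yields the uniform bound $\|\bbeta_0\|_2^2\le M_{\mu}^{2/(4+\mu)}/r_{\min}$.

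\smallskip
\noindent\textbf{Step 2: bounding $\|\fNL\|_{L_{4+\delta}}$.}
Set $\delta = \min(\mu,\nu)$. By Minkowski's inequality,
\begin{equation*}
    \|\fNL\|_{L_{4+\delta}}
    \;\le\;
    \|y\|_{L_{4+\delta}}
    +
    \|\bx^{\top}\bbeta_0\|_{L_{4+\delta}}.
\end{equation*}
The first term is bounded by $M_{\mu}^{1/(4+\mu)}$ via \Cref{asm:model} (again using Lyapunov since $\delta\le\mu$). For the second term, write $\bx^{\top}\bbeta_0=\bz^{\top}\ba$ with $\ba:=\bSigma^{1/2}\bbeta_0$, so $\|\ba\|_2\le \sqrt{r_{\max}}\,\|\bbeta_0\|_2$ is bounded by Step~1. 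Since the entries of $\bz$ are i.i.d., mean zero, with $(4+\nu)$-th moment bounded by $M_{\nu}$, Rosenthal's (equivalently Marcinkiewicz--Zygmund) inequality yields a constant $C=C(\nu,M_{\nu})$ such that
\begin{equation*}
    \EE\!\left[\big|\bz^{\top}\ba\big|^{4+\delta}\right]
    \;\le\;
    C\,\|\ba\|_2^{\,4+\delta},
\end{equation*}
so $\|\bx^{\top}\bbeta_0\|_{L_{4+\delta}}\le C^{1/(4+\delta)}\sqrt{r_{\max}}\,\|\bbeta_0\|_2$ is uniformly bounded. Combining the two terms establishes the claim.

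\smallskip
\noindent\textbf{Anticipated obstacle.} Neither step is technically hard; the only subtle point is not to naively bound $\|\bbeta_0\|_2$ by $\|\bSigma^{-1}\|_{\oper}\|\EE[\bx y]\|_2$, since $\|\EE[\bx y]\|_2$ can a priori grow like $\sqrt{p}$. The variational characterization of $\bbeta_0$ via the Pythagorean identity bypasses this and is essential for obtaining a dimension-free bound. Once $\|\bbeta_0\|_2$ is controlled, Step~2 is routine given the bounded moment assumption on the i.i.d.\ entries of $\bz$.
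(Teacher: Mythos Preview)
Your proposal is correct and follows essentially the same route as the paper: the paper uses the identical Pythagorean decomposition and eigenvalue lower bound for Step~1, and for Step~2 it applies the triangle inequality in $L_{4+\delta}$ and then bounds $\|\bx^{\top}\bbeta_0\|_{L_{4+\delta}}\le C\|\bbeta_0\|_2$ by citing Lemma~7.8 of \cite{erdos_yau_2017}, which is precisely the Rosenthal/Marcinkiewicz--Zygmund inequality you invoke. Your explicit choice $\delta=\min(\mu,\nu)$ is a helpful clarification the paper leaves implicit.
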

    \begin{proof}
    By Jensen's inequality and \Cref{asm:model}, we have $\EE[y^2]\leq \EE[y^{4+\delta}]^{2/(4+\delta)} \leq C_0^{2/(4+\delta)}$ for some constant $C_0>0$.

    By the orthogonality, we have that $\EE[y^2] = \EE[(\bx^{\top}\bbeta_0)^2] + \|\fNL\|_{L^2}^2$, which implies that $\EE[(\bx^{\top}\bbeta_0)^2]$ and $\|\fNL\|_{L^2}^2$ is bounded.
    Since by \Cref{asm:feat}, the eigenvalues of $\bSigma$ is lower bounded by $r_{\min}>0$, we have that $\EE[(\bx^{\top}\bbeta_0)^2] = \bbeta_0\bSigma\bbeta_0\geq r_{\min} \|\bbeta_0\|_2^2$ and thus $\|\bbeta_0\|_2$ is bounded.

    Since $\fNL=y-\bx^{\top}\bbeta_0$, by triangle inequality we have that
    \begin{align*}
        \|\fNL\|_{L_{4+\delta}} &\leq \|y\|_{L_{4+\delta}} + \|\bx^{\top}\bbeta_0\|_{L_{4+\delta}} \\
        &\leq \|y\|_{L_{4+\delta}} + C \|\bbeta_0\|_{2} )
    \end{align*}
    for some constant $C>0$.
    The second inequality of the above display is from Lemma 7.8 of \citep{erdos_yau_2017}.
    Since $\|y\|_{L_{4+\delta}}$ and $\|\bbeta_0\|_{2}$ are bounded, it follows that $ \|\fNL\|_{L_{4+\delta}} $ is also bounded.
    \end{proof}

\section{Asymptotic equivalents: background and known results}\label{sec:known_lemmas}

\paragraph{Preliminary background.}

In several proofs, we employ the concept of asymptotic equivalence of sequences of random matrices; see \citep{dobriban_wager_2018,dobriban_sheng_2021,patil2022mitigating,patil2022bagging}.
This section provides a brief overview of the associated terminology and calculus principles.

    \begin{definition}[Asymptotic equivalence]
    \label{def:deterministic-equivalent}
    Consider sequences $\{ \bA_p \}_{p \ge 1}$ and $\{ \bB_p \}_{p \ge 1}$ of (random or deterministic) matrices of growing dimensions.
    We say that $\bA_p$ and $\bB_p$ are equivalent and write
    $\bA_p \asympequi \bB_p$ if
    $\lim_{p \to \infty} | \tr[\bC_p (\bA_p - \bB_p)] | = 0$ almost surely for every sequence of random matrices $\bC_p$ independent to $\bA_p$ and $\bB_p$, with bounded trace norm such that $\limsup_{p\rightarrow\infty} \| \bC_p \|_{\mathrm{tr}} < \infty$ almost surely.
\end{definition}

    The notion of asymptotic equivalence of two sequences of random matrices above can be further extended to incorporate conditioning on another sequence of random matrices; see \cite{patil2022bagging} for more details.

    \begin{definition}[Conditional asymptotic equivalence]
        \label{def:cond-deterministic-equivalent}
        Consider sequences $\{ \bA_p \}_{p \ge 1}$, $\{ \bB_p \}_{p \ge 1}$ and $\{ \bD_p \}_{p \ge 1}$
        of (random or deterministic) matrices of growing dimensions.
        We say that $\bA_p$ and $\bB_p$ are equivalent given $\bD_p$ and write
        $\bA_p \asympequi \bB_p\mid \bD_p$ if
        $\lim_{p \to \infty} | \tr[\bC_p (\bA_p - \bB_p)] | = 0$ almost surely conditional on $\{\bD_p\}_{p\ge 1}$, i.e.,
        \begin{align*}
            \PP\left(\lim\limits_{p\rightarrow\infty}|\tr[\bC_p(\bA_p-\bB_p)]| =0\given \{\bD_p\}_{p\ge 1}\right) =1,
        \end{align*}
        for any sequence of random matrices $\bC_p$ independent to $\bA_p$ and $\bB_p$ conditional on $\bD_p$, with bounded trace norm
        such that $\limsup \| \bC_p \|_{\mathrm{tr}} < \infty$
        as $p \to \infty$.
    \end{definition}

    Below we summarize the calculus rules for conditional asymptotic equivalence in \Cref{def:cond-deterministic-equivalent}.
    These are adapted from Lemma S.7.4 and S.7.6 of \citep{patil2022bagging}.
    \begin{lemma}[Calculus of asymptotic equivalents]
        \label{lem:calculus-detequi}
        Let $\bA_p$, $\bB_p$, $\bC_p$ and $\bD_p$ be sequences of random matrices.
        The calculus of asymptotic equivalents satisfies the following properties:
        \begin{enumerate}[(1),leftmargin=7mm]
            \item 
            \label{lem:calculus-detequi-item-equivalence}
            Equivalence: The relation $\asympequi$ is an equivalence relation.
            
            \item 
            \label{lem:calculus-detequi-item-sum}
            Sum: If $\bA_p \asympequi \bB_p\mid \bE_p$ and $\bC_p \asympequi \bD_p\mid \bE_p$, then $\bA_p + \bC_p \asympequi \bB_p + \bD_p\mid \bE_p$.
            \item 
            \label{lem:calculus-detequi-item-product}
            Product: If $\bA_p$ has bounded operator norms such that $\limsup_{p\rightarrow\infty}\| \bA_p \|_{\oper} < \infty$, $\bA_p$ is conditional independent to $\bB_p$ and $\bC_p$ given $\bE_p$ for $p\ge 1$, and $\bB_p \asympequi \bC_p\mid \bE_p$, then $\bA_p \bB_p \asympequi \bA_p \bC_p\mid \bE_p$.
            \item 
            \label{lem:calculus-detequi-item-trace}
            Trace:
            If $\bA_p \asympequi \bB_p\mid \bE_p$, then $\tr[\bA_p] / p - \tr[\bB_p] / p \to 0$ almost surely when conditioning on $ \bE_p$.
            \item 
            \label{lem:calculus-detequi-item-differentiation}
            Differentiation:
            Suppose $f(z, \bA_p) \asympequi g(z, \bB_p)\mid \bE_p$ where the entries of $f$ and $g$ are analytic functions in $z \in S$ and $S$ is an open connected subset of $\CC$.
            Suppose for any sequence $\bC_p$ of deterministic matrices with bounded trace norm we have $| \tr[\bC_p (f(z, \bA_p) - g(z, \bB_p))] | \le M$ for every $p$ and $z \in S$.
            Then, we have $f'(z, \bA_p) \asympequi g'(z, \bB_p)\mid \bE_p$ for every $z \in S$, where the derivatives are taken entrywise with respect to $z$.
            
            \item \label{lem:cond-calculus-detequi-item-uncond} Unconditioning: If $ \bA_p\asympequi \bB_p\mid \bE_p$, then $ \bA_p\asympequi \bB_p$.
            
            \item \label{lem:cond-calculus-detequi-item-substitute} Substitution: Let $v:\RR^{p\times p}\rightarrow\RR$ and $f(v(\bC),\bC):\RR^{p\times p}\rightarrow\RR^{p\times p}$ be a matrix function for matrix $\bC\in\RR^{p\times p}$ and $p\in\NN$,  that is continuous in the first augment with respect to operator norm. If $v(\bC)\stackrel{\as}{=} v(\bD)$ such that $\bC$ is independent to $\bD$, then $f(v(\bC),\bC)\asympequi f(v(\bD),\bC)\mid \bC$.
        \end{enumerate}
    \end{lemma}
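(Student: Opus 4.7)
The plan is to prove each of the seven calculus rules by unwinding the definition of (conditional) asymptotic equivalence, using elementary trace and H\"older estimates for items (1)--(4), (6), (7), and invoking a classical normal-families argument from complex analysis for item (5).

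Items (1), (2), and (4) are immediate. Reflexivity and symmetry of $\asympequi$ are direct from the definition; transitivity follows from the decomposition $\tr[\bG_p(\bA_p - \bD_p)] = \tr[\bG_p(\bA_p - \bB_p)] + \tr[\bG_p(\bB_p - \bD_p)]$ together with the triangle inequality, and the sum rule is analogous. For the trace rule, I would specialize the definition to the deterministic test sequence $\bG_p = \bI_p / p$, which has trace norm $1$ and trivially satisfies the independence requirement.

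For item (3), take any admissible test matrix $\bG_p$ with bounded trace norm that is conditionally independent of $(\bA_p\bB_p, \bA_p\bC_p)$ given $\bE_p$, and write $\tr[\bG_p(\bA_p\bB_p - \bA_p\bC_p)] = \tr[(\bG_p\bA_p)(\bB_p - \bC_p)]$. By H\"older's inequality for Schatten norms, $\|\bG_p\bA_p\|_{\mathrm{tr}} \le \|\bG_p\|_{\mathrm{tr}} \|\bA_p\|_{\oper}$, which is almost surely bounded by hypothesis, and conditional independence of $\bA_p$ from $(\bB_p, \bC_p)$ given $\bE_p$ makes $\bG_p\bA_p$ an admissible test sequence for $\bB_p \asympequi \bC_p \mid \bE_p$. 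For item (7), since $v(\bC) \stackrel{\mathrm{a.s.}}{=} v(\bD)$ and $f$ is operator-norm-continuous in its first argument, $\|f(v(\bC), \bC) - f(v(\bD), \bC)\|_{\oper} \to 0$ almost surely, and the trace-with-bounded-norm estimate $|\tr[\bG_p(f(v(\bC),\bC) - f(v(\bD),\bC))]| \le \|\bG_p\|_{\mathrm{tr}} \|f(v(\bC),\bC) - f(v(\bD),\bC)\|_{\oper}$ vanishes. For item (6), given $\bG_p$ independent of $(\bA_p, \bB_p)$, I would augment the conditioning to include $\bG_p$ (which preserves the admissibility of the test sequence in the conditional statement) and apply the tower property to conclude that $P(\lim_p |\tr[\bG_p(\bA_p - \bB_p)]| = 0) = \mathbb{E}[P(\lim_p |\tr[\bG_p(\bA_p - \bB_p)]| = 0 \mid \bE_p, \bG_p)] = 1$.

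The main obstacle is item (5), the differentiation rule, which requires promoting pointwise convergence of trace functionals to convergence of their derivatives. For any fixed test sequence $\bG_p$ of bounded trace norm, define the scalar function $h_p(z) := \tr[\bG_p(f(z, \bA_p) - g(z, \bB_p))]$, which is holomorphic on $S$ by entrywise analyticity of $f,g$ and linearity of $\tr$. The hypothesis gives the uniform bound $|h_p(z)| \le M$ for all $p$ and $z \in S$, while $h_p(z) \to 0$ pointwise on $S$ almost surely (conditional on $\bE_p$). Taking a countable dense subset $S_0 \subset S$ and intersecting the a.s.\ events along $S_0$ yields pointwise convergence on a dense set with probability one; Vitali's convergence theorem (applied to the uniformly bounded, hence normal, family $\{h_p\}$ of holomorphic functions) then promotes this to locally uniform convergence of $h_p$ on $S$. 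Weierstrass's theorem on uniform limits of analytic functions gives $h_p'(z) \to 0$ locally uniformly, and since $h_p'(z) = \tr[\bG_p(f'(z, \bA_p) - g'(z, \bB_p))]$ by interchanging differentiation and the (finite) trace, the differentiation rule follows. The delicate point to handle carefully is the measurability when passing from countable-dense pointwise convergence to full locally uniform convergence, which is precisely why one first restricts to the countable set $S_0$ before invoking the normal-families machinery.
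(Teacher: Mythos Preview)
The paper does not prove this lemma: it is presented as a compendium of known facts, with the sentence ``These are adapted from Lemma S.7.4 and S.7.6 of \cite{patil2022bagging}'' and no proof given. Your proposal therefore goes well beyond what the paper itself supplies, and your arguments for items (1), (2), (4), (5), (6), (7) are essentially the standard ones that appear in the cited references; in particular the Vitali/Weierstrass normal-families route for the differentiation rule is exactly how it is done.

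One small gap worth flagging in your treatment of item (3): you take a test matrix $\bG_p$ that is conditionally independent of $(\bA_p\bB_p,\bA_p\bC_p)$ given $\bE_p$, and then assert that $\bG_p\bA_p$ is an admissible test sequence for $\bB_p\asympequi\bC_p\mid\bE_p$. But admissibility there requires $\bG_p\bA_p$ to be conditionally independent of $(\bB_p,\bC_p)$ given $\bE_p$, and this does not follow from your premise alone (nothing prevents $\bG_p$ from being correlated with $\bA_p$, for instance). The way this is resolved in the literature you are implicitly following is either to restrict the class of test matrices to those independent of \emph{all} the random ingredients, or to observe that it suffices to check deterministic test sequences; either fix closes the gap, but it should be stated. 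A related minor point: in item (7), the notation $v(\bC)\stackrel{\mathrm{a.s.}}{=} v(\bD)$ in this paper means $|v(\bC)-v(\bD)|\to 0$ almost surely, not literal almost-sure equality for each $p$, so your continuity argument is indeed required and not superfluous.
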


\paragraph{Standard ridge resolvents and various extensions.}
\label{append:det-equi-resol}

In this section, we collect various asymptotic equivalents.
The following corollary is a simple consequence of Theorem 1 in \citep{rubio_mestre_2011}.
It provides deterministic equivalent for the scaled ridge resolvent.

\begin{corollary}
    [Deterministic equivalent for scaled ridge resolvent]
    \label{cor:asympequi-scaled-ridge-resolvent}
    Suppose $\bx_i \in \RR^{p}$, $1 \le i \le n$, are i.i.d.\ random vectors
    such that each $\bx_i = \bz_{i} \bSigma^{1/2}$, where $\bz_i$ is a random vector consisting of i.i.d.\ entries $z_{ij}$, $1 \le j \le p$, satisfying $\EE[z_{ij}] = 0$, $\EE[z_{ij}^2] = 1$, and $\EE[|z_{ij}|^{8+\alpha}] \le M_\alpha$ for some constants $\alpha > 0$ and $M_\alpha < \infty$, and $\bSigma \in \RR^{p \times p}$ is a positive semidefinite matrix satisfying $0 \preceq \bSigma \preceq r_{\max} I_p$ for some constant $r_{\max} < \infty$ (independent of $p$).
    Let $\bX \in \RR^{n \times p}$ the concatenated matrix with $\bx_i^\top$, $1 \le i \le n$, as rows, and let $\hSigma \in \RR^{p \times p}$ denote the random matrix $\bX^\top \bX / n$.
    Let $\gamma = p / n$.
    Then, for $z \in \CC^{+}$, as $n, p \to \infty$ such that $0 < \liminf \gamma \le \limsup \gamma < \infty$, and $\lambda > 0$, we have
    \[
        \lambda (\hSigma + \lambda \bI_p)^{-1}
        \asympequi
        (v(-\lambda; \gamma) \bSigma + \bI_p)^{-1},
    \]
    where $v(-\lambda; \gamma) > 0$ is the unique solution to the fixed-point equation
    \begin{align}
        \label{eq:basic-ridge-equivalence-v-fixed-point}
        \frac{1}{v(-\lambda; \gamma)}
        =
        \lambda
        + \gamma \int \frac{r}{1+v(-\lambda; \gamma)r} \, \rd H_p(r).
    \end{align}
    Here $H_n$ is the empirical distribution (supported on $\RR_{\ge 0}$) of the eigenvalues of $\bSigma$.
\end{corollary}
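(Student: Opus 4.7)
The plan is to derive this as a direct consequence of Theorem~1 of \citet{rubio_mestre_2011}, the classical deterministic equivalent for sample-covariance resolvents with a general population covariance, and then algebraically rewrite the resulting fixed-point equation into the form \eqref{eq:basic-ridge-equivalence-v-fixed-point}.

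First, I would verify the hypotheses of \citet{rubio_mestre_2011} under \Cref{asm:feat}: the entries $z_{ij}$ of $\bZ$ have mean zero, unit variance, and uniformly bounded $(8+\alpha)$-th moments; the population covariance $\bSigma$ is deterministic with spectrum uniformly bounded above by $r_{\max}$; and the aspect ratio $\gamma = p/n$ stays in a compact subset of $(0,\infty)$. Writing $\hSigma = \bSigma^{1/2}\bZ^{\top}\bZ\bSigma^{1/2}/n$ casts the problem into their framework, with $\bT = \bI_n/n$ and population covariance $\bSigma$.

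Their theorem then yields, for each $z \in \CC \setminus \RR_{+}$, a deterministic equivalent of the form $(\hSigma - z\bI_p)^{-1} \asympequi (c(z)\bSigma - z\bI_p)^{-1}$, where $c(z)$ is characterized by a Silverstein-type fixed-point relation. Specializing to $z = -\lambda$ with $\lambda > 0$, multiplying through by $\lambda$, and reparametrizing via $v = c(-\lambda)/\lambda$ converts the equivalent into $\lambda(\hSigma + \lambda\bI_p)^{-1} \asympequi (v\bSigma + \bI_p)^{-1}$, which is exactly the claim. Under the same change of variables, the Rubio--Mestre companion equation rearranges into $1/v = \lambda + \gamma\int r/(1+vr)\,\rd H_p(r)$, matching \eqref{eq:basic-ridge-equivalence-v-fixed-point}.

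The principal piece of work is this algebraic matching between the companion parameter used by \citet{rubio_mestre_2011} and the $v$-parametrization adopted in the statement; the rest is routine book-keeping. Existence and uniqueness of the positive solution $v(-\lambda;\gamma)$ can either be inherited from \citet{rubio_mestre_2011} or verified directly by noting that $v \mapsto \lambda + \gamma\int r/(1+vr)\,\rd H_p(r)$ is continuous and strictly monotone decreasing on $(0,\infty)$ with range inside $(\lambda,\infty)$, forcing a unique crossing with the strictly decreasing map $v \mapsto 1/v$. I do not anticipate any substantive obstacle beyond this algebraic translation; the corollary is essentially a convenient restatement of an existing RMT deterministic equivalent in the normalization used throughout the rest of the paper.
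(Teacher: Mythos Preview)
Your proposal is correct and matches the paper's own treatment: the paper does not give a detailed proof but simply states that the corollary ``is a simple consequence of Theorem~1 in \citep{rubio_mestre_2011},'' which is precisely the route you outline (verify hypotheses, apply the Rubio--Mestre deterministic equivalent, and reparametrize to the $v$-normalization). Your added remarks on the algebraic matching and on existence/uniqueness of the fixed point are more explicit than anything the paper writes, but they are exactly the routine details one would fill in.
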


Side remark: The parameter $v(-\lambda; \gamma)$ in \Cref{cor:asympequi-scaled-ridge-resolvent} is also the companion Stieltjes transform of the spectral distribution of the sample covariance matrix $\hSigma$. It is also the Stieltjes transform of the spectral distribution of the gram matrix $\bX \bX^\top / n$.
This is essentially the characterization we use in proving \Cref{prop:data-path}.

The following lemma uses \Cref{cor:asympequi-scaled-ridge-resolvent} along with calculus of deterministic equivalents (from \Cref{lem:calculus-detequi}), and provides deterministic equivalents for resolvents needed to obtain asymptotic bias and variance of standard ridge regression.
It is adapted from Lemma S.6.10 of \citep{patil2022mitigating}.

\begin{lemma}[Deterministic equivalents for ridge resolvents associated with generalized bias and variance]
    \label{lem:deter-approx-generalized-ridge}
    Suppose $\bx_i \in \RR^{p}$, $1 \le i \le n$, are i.i.d.\ random vectors with each $\bx_i = \bz_{i} \bSigma^{1/2}$, where $\bz_i \in \RR^{p}$ is a random vector that contains i.i.d.\ random variables $z_{ij}$, $1 \le j \le p$, each with $\EE[z_{ij}] = 0$, $\EE[z_{ij}^2] = 1$, and $\EE[|z_{ij}|^{4+\alpha}] \le M_\alpha$ for some constants $\alpha > 0$ and $M_\alpha < \infty$, and $\bSigma \in \RR^{p \times p}$ is a positive semidefinite matrix with $r_{\min} \bI_p \preceq \bSigma \preceq r_{\max} \bI_p$ for some constants $r_{\min} > 0$ and $r_{\max} < \infty$ (independent of $p$).
    Let $\bX \in \RR^{n \times p}$ be the concatenated random matrix with $\bx_i$, $1 \le i \le n$, as its rows, and define $\hSigma = \bX^\top \bX / n \in \RR^{p \times p}$.
    Let $\gamma = p / n$.
    Then, for $\lambda > 0$, as $n, p \to \infty$ with $0 < \liminf \gamma \le \limsup \gamma < \infty$, the following statements hold:
    \begin{enumerate}[(1),leftmargin=7mm]
        \item\label{eq:detequi-ridge-genbias}  Bias of ridge regression:
        \begin{align*}
            % \begin{split}            
            \lambda^2
            (\hSigma + \lambda \bI_p)^{-1} \bA (\hSigma + \lambda \bI_p)^{-1}
            \asympequi 
            (v(-\lambda; \gamma,\bSigma) \bSigma + \bI_p)^{-1}
            (\tv_b(-\lambda; \gamma,\bSigma,\bA) \bSigma + \bA)
            (v(-\lambda; \gamma,\bSigma) \bSigma + \bI_p)^{-1}.
            % \end{split}
        \end{align*}

        \item\label{eq:detequi-ridge-genvar} Variance of ridge regression:
        \begin{align*}
            (\hSigma + \lambda \bI_p)^{-2} \hSigma \bA
            \asympequi
            \tv_v(-\lambda; \gamma,\bSigma) (v(-\lambda; \gamma,\bSigma) \bSigma + \bI_p)^{-2} \bSigma\bA.
        \end{align*}
    \end{enumerate}
    Here $v(-\lambda; \gamma,\bSigma) > 0$ is the unique solution to the fixed-point equation
        \begin{equation}
            \label{eq:def-v-ridge}
            \frac{1}{v(-\lambda; \gamma,\bSigma)}
            = \lambda+\int \frac{\gamma r}{1+v(-\lambda; \gamma,\bSigma) r} \, \rd H_n(r;\bSigma),
        \end{equation}
        and $\tv_b(-\lambda; \gamma,\bSigma)$ and $\tv_v(-\lambda; \gamma,\bSigma)$
        are defined through $v(-\lambda; \gamma,\bSigma)$ by the following equations:
        \begin{align}
            \tv_b(-\lambda; \gamma,\bSigma,\bA)
            &=
            \ddfrac{\gamma\tr[\bA \bSigma(v(-\lambda; \gamma,\bSigma)\bSigma + \bI_p)^{-2}]/p
            }{v(-\lambda; \gamma,\bSigma)^{-2}- \int \gamma r^2(1+v(-\lambda; \gamma,\bSigma) r)^{-2} \, \rd H_n(r;\bSigma)}
            , \label{eq:def-v-b-ridge}\\
            \tv_v(-\lambda; \gamma,\bSigma)^{-1}
           & = 
                v(-\lambda; \gamma,\bSigma)^{-2}
                - \int \gamma r^2(1+v(-\lambda; \gamma,\bSigma) r)^{-2} \, \rd H_n(r;\bSigma),
                \label{eq:def-tv-v-ridge}
        \end{align}
    where $H_n(\cdot;\bSigma)$ is the empirical distribution 
    (supported on $[r_{\min}, r_{\max}]$)
    of the eigenvalues of $\bSigma$.
\end{lemma}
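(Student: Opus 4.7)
The plan is to derive both equivalents as direct consequences of the basic scaled-ridge resolvent $\lambda(\hSigma+\lambda\bI_p)^{-1}\asympequi(v(-\lambda;\gamma,\bSigma)\bSigma+\bI_p)^{-1}$ supplied by \Cref{cor:asympequi-scaled-ridge-resolvent}, together with the calculus rules of \Cref{lem:calculus-detequi}, especially the differentiation and product rules. No fresh concentration inequalities are needed; the extra matrix factor $\bA$ and the bilinearity in both claims will be produced by differentiating the basic equivalent with respect to a single scalar parameter and then multiplying by $\bA$.

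For the variance statement \eqref{eq:detequi-ridge-genvar}, I would differentiate both sides of the basic equivalent in $\lambda$. The left-hand side becomes
\[
\tfrac{d}{d\lambda}\bigl[\lambda(\hSigma+\lambda\bI_p)^{-1}\bigr]=(\hSigma+\lambda\bI_p)^{-1}-\lambda(\hSigma+\lambda\bI_p)^{-2}=(\hSigma+\lambda\bI_p)^{-2}\hSigma,
\]
while the right-hand side becomes $-v'(-\lambda;\gamma,\bSigma)(v\bSigma+\bI_p)^{-2}\bSigma$. Implicitly differentiating \eqref{eq:def-v-ridge} in $\lambda$ yields $-v'=1/\bigl(v^{-2}-\gamma\!\int r^2/(1+vr)^2\,\rd H_n(r;\bSigma)\bigr)$, which matches $\tv_v(-\lambda;\gamma,\bSigma)$ exactly. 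The differentiation step is justified by the rule in \Cref{lem:calculus-detequi}: both sides are analytic in $\lambda$ on a neighborhood of $(0,\infty)$ in $\CC$, and entrywise uniformly bounded on any compact subinterval thanks to the spectral bounds on $\bSigma$ and the positivity of $\lambda$. Right-multiplying by $\bA$ via the product rule (valid because $\bA$ is deterministic with bounded operator norm) delivers the claim.

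For the bias statement \eqref{eq:detequi-ridge-genbias}, I would use the perturbation identity
\[
(\hSigma+\lambda\bI_p)^{-1}\bA(\hSigma+\lambda\bI_p)^{-1}=-\tfrac{d}{dt}\big|_{t=0}(\hSigma+\lambda\bI_p+t\bA)^{-1}.
\]
The Rubio--Mestre framework underpinning \Cref{cor:asympequi-scaled-ridge-resolvent} admits an anisotropic-shift extension giving $\lambda(\hSigma+\lambda\bI_p+t\bA)^{-1}\asympequi(v_t\bSigma+\bI_p+t\bA)^{-1}$ for $t$ in a neighborhood of $0$, with $v_0=v$ and $v_t$ determined by a perturbed fixed-point equation. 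Differentiating this perturbed equivalent in $t$ at $0$ and invoking the differentiation rule of \Cref{lem:calculus-detequi} again turns the left-hand side into $-\lambda(\hSigma+\lambda\bI_p)^{-1}\bA(\hSigma+\lambda\bI_p)^{-1}$ and the right-hand side into $-(v\bSigma+\bI_p)^{-1}\bigl(v'(0)\bSigma+\bA\bigr)(v\bSigma+\bI_p)^{-1}$. Computing $v'(0)$ by implicit differentiation of the $t$-perturbed fixed point and tracing against $(v\bSigma+\bI_p)^{-2}\bSigma$ identifies $v'(0)$ with $\tv_b(-\lambda;\gamma,\bSigma,\bA)$, whose numerator $\gamma\tr[\bA\bSigma(v\bSigma+\bI_p)^{-2}]/p$ and denominator $v^{-2}-\gamma\!\int r^2/(1+vr)^2\,\rd H_n$ match the stated expression.

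The main obstacle is the bias step: producing a clean perturbed equivalent for $(\hSigma+\lambda\bI_p+t\bA)^{-1}$ when $\bA$ does not commute with $\bSigma$, and verifying the analyticity and uniform entrywise boundedness needed to exchange the $t$-derivative with the almost-sure limit. Should a perturbed Rubio--Mestre invocation feel unwieldy, a clean alternative is to reproduce the bilinear leave-one-out argument of Lemma S.6.10 of \citep{patil2022mitigating} from which this lemma is adapted: expand one resolvent via Sherman--Morrison, apply concentration of quadratic forms in the spirit of \Cref{lem:concen-quadform-uncorrelated}, and close a self-consistent equation in $\tv_b$ using the already-established $v$ and $\tv_v$; the denominator $v^{-2}-\gamma\!\int r^2/(1+vr)^2\,\rd H_n$ re-emerges from the same Jacobian factor encountered in the variance case.
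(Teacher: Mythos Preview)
Your proposal is correct and matches the paper's approach. The paper does not supply an independent proof of this lemma; it states only that the result ``uses \Cref{cor:asympequi-scaled-ridge-resolvent} along with calculus of deterministic equivalents (from \Cref{lem:calculus-detequi})'' and is ``adapted from Lemma S.6.10 of \citep{patil2022mitigating},'' which is exactly the route you outline---differentiate the basic scaled resolvent equivalent in $\lambda$ for the variance term, and in an auxiliary perturbation parameter for the bias term, then identify the derivatives of the fixed point via implicit differentiation. One small comment: in the bias step, the clean form of the generalized Rubio--Mestre equivalent for $(\hSigma+\lambda\bI_p+t\bA)^{-1}$ is $(x_t\bSigma+\lambda\bI_p+t\bA)^{-1}$ with a scalar $x_t$ (so that $x_0=\lambda v$), rather than $(v_t\bSigma+\bI_p+t\bA)^{-1}$; once you write it this way the $\lambda^2$ factor and the identification $\tv_b=x'(0)$ drop out cleanly without any rescaling of $\bA$.
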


Though \Cref{lem:deter-approx-generalized-ridge} states the dependency explicitly, we will simply write $H_p(r)$, $v(-\lambda; \gamma)$, $\tv_b(-\lambda; \gamma, \bA)$, and $\tv_v(-\lambda; \gamma)$ to denote $H_n(r;\bSigma)$, $v(-\lambda; \gamma,\bSigma)$, $\tv_b(-\lambda; \gamma,\bSigma,\bA)$, and $\tv_v(-\lambda; \gamma,\bSigma)$, respectively, for simplifying various notations when it is clear from the context.
When $\bA=\bSigma$, we simply write $\tv_b(-\lambda; \gamma) = \tv_b(-\lambda; \gamma,\bA)$.

\section{Experiment details}\label{sec:experiment}

\subsection{Reproducibility and compute details}

The source code for generating all experimental figures in this paper can be accessed at: \href{www.google.com}{code repository}.
The source code also includes details about
the computational resources used to run the code
and other timing details.

\subsection{Simulation details}\label{subsec:simu}

The covariance matrix of an auto-regressive process of order 1 (AR(1)) is given by $\bSigma_{\mathrm{ar1}}$, where $(\bSigma_{\mathrm{ar1}})_{ij} = \rhoar^{|i-j|}$ for some parameter $\rhoar\in(0,1)$. 
Define $\bbeta_0 = \frac{1}{5}\sum_{j=1}^5 \bw_{(j)}$ where $\bw_{(j)}$ is the eigenvector of $\bSigma_{\mathrm{ar1}}$ associated with the top $j$th eigenvalue $r_{(j)}$.
We generated data $(\bx_i,y_i)$ for $i=1,\ldots,n$ from a nonlinear model:
\begin{align}
    y_i &= \bx_i^{\top}\bbeta_0 + \frac{1}{p}(\|\bx_i\|_2^2-\tr[\bSigma_{\mathrm{ar1}}]) + \epsilon_i,
    \quad \bx_i = \bSigma_{\mathrm{ar1}}^{\frac{1}{2}}\bz_i,
    \quad z_{ij}\overset{iid}{\sim} \frac{t_5}{\sigma_5},
    \quad  \epsilon_i\sim \frac{t_5}{\sigma_5}, \tag{M-AR1}\label{eq:model-ar1}
\end{align}
where $\sigma_5 = \sqrt{5/3}$ is the standard deviation of $t_5$ distribution.
The benefit of using the above nonlinear model is that we can clearly separate the linear and the nonlinear components and compute the quantities of interest because $\bbeta_0$ happens to be the best linear projection.
For the simulations, we set $\rhoar=0.5$.
For finite ensembles, the risks are averaged across $50$ simulations.

% \subsection{Risk comparisons along equivalence paths for finite ensembles}
 
% \Cref{fig:risk-M} verifies the following linear (in $M$) relationship along the path in \Cref{thm:equiv-estimator-linear} for finite ensembles: 
% \smash{$
%     R(\hbeta_{\lfloor p/\psi_1\rfloor,M}^{\lambda_1};\bA, \bb, \bbeta_0) - R(\hbeta_{\lfloor p/\psi_2\rfloor,M}^{\lambda_2};\bA, \bb, \bbeta_0) \asympequi \Delta / M,
%     $} for some $\Delta$ (independent of $M$), which is eventually almost surely bounded.

    \subsection{Real-world datasets}\label{subsec:real-data}
        We conduct experiments on real-world datasets to examine the equivalence in a more general setting.
        We utilized three image datasets for our experimental analysis: CIFAR 10, MNIST, and USPS \citep{torchvision2016}. 
        For the CIFAR 10 dataset, we subset the images labeled as ``dog'' and ``cat''.
        For other datasets, we subset the images labeled ``3'' and ``8''. 
        Then we treat them as binary labels $y\in\{0,1\}$ and use the flattened image as our feature vector $\bx$.
        The training sample sizes, the feature dimensions, and the test sample sizes $(n,p,n_{\test})$ are (10000, 3072, 2000), (12873, 784, 2145), and (22358, 3072, 7981) for the three datasets, respectively.

        For the first experiment in \Cref{fig:cifar10}, we fix $\phi=p/n$ and $\bar{\psi}=4\phi$ using the training set of CIFAR-10.
        For $\lambda_{\bar{\psi}}\in\{0.01,0.05,0.1,1\}$, we compute the data-dependent value of $\bar{\lambda}$ based on \Cref{prop:data-path}.
        Each value of $\lambda_{\bar{\psi}}$ gives a path between $(\lambda_{\bar{\psi}}, \bar{\psi})$ and $(\bar{\lambda},\phi)$.

        For the second experiment in \Cref{fig:realdata}, by varying the values of subsample aspect ratios $\bar{\psi}$, we compare the random Gaussian linear projection of the estimators and the prediction risk at the two endpoints $(\bar{\lambda},\phi)$ and $(0,\bar{\psi})$, on CIFAR-10, MNIST, and USPS datasets.
        
        \begin{figure}[!ht]
            \centering
            \includegraphics[width=0.9\textwidth]{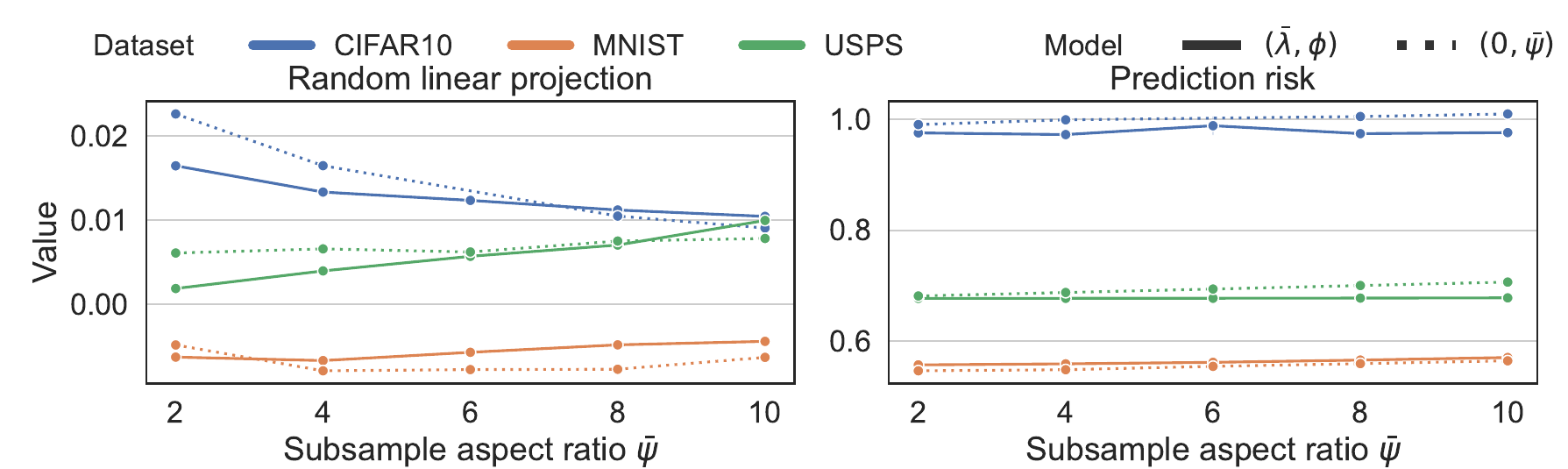}
            \caption{
            The functional equivalences of subsampling and ridge regularization on different datasets.
            For each dataset with an aspect ratio $\phi=p/n$ and each value of $\bar{\psi}$, the corresponding ridge penalty $\bar{\lambda}$ is estimated by \Cref{prop:data-path}. 
            The two models with ridge penalty and subsample aspect ratio $(\bar{\lambda},\phi)$ and $(0,\bar{\psi})$ are compared.
            Note that the model corresponding to $(\bar{\lambda},\phi)$ is ridge regression at ridge penalty $\bar{\lambda}$ without subsampling, and the model corresponding to $(0, \bar{\psi})$ is full-ensemble ridgeless at a subsample aspect ratio $\bar{\psi}$.
            }
            \label{fig:realdata}
        \end{figure}

    \subsection{Random features model}\label{subsec:rf}
        The data $(\bx_i,y_i)$ for $i \in [n]$ is generate from the nonlinear model:
        $$y_i = \bx_i^{\top}\bbeta_0 + \frac{1}{p}(\|\bx_i\|_2^2-p) + \epsilon_i,$$
        where $x_{ij}\overset{iid}{\sim} \cN(0,1)$, and $\epsilon_i\sim \cN(0,1)$.
        The prediction risk of the ridge ensemble ($M=100$) is computed based on the random feature $\varphi(\bF \bx_i)$ and the response $y_i$, where $\bF\in\RR^{d\times p}$ is the random weight matrix with $F_{ij}\overset{iid}{\sim} \cN(0,p^{-1})$. Here, $\varphi$ is a nonlinear activation function (sigmoid, ReLU, or tanh) that is applied entry-wise to $\bF \bx_i$.
        For the experiment, we set $p=250$, $d=500$, and $\phi=d/n=0.1$.

\end{document}